\newtheorem{thm}{Theorem}[section]
\newtheorem{cor}[thm]{Corollary}
\newtheorem{lem}[thm]{Lemma}
\newtheorem{prop}[thm]{Proposition}
\theoremstyle{definition}
\newtheorem{defn}[thm]{Definition}
\theoremstyle{remark}
\newtheorem{rem}[thm]{Remark}
\theoremstyle{Example}
\numberwithin{equation}{section}
\newcommand{\A}{\mathcal{A}}
\begin{document}

\title{Submaximal Integral Domains}%
\author{A. Azarang}%
\keywords{Maximal Subring, Integral Domains, Noetherian, UFD}%
\subjclass[2000]{13B02, 13G05, 13E05}%
\maketitle
\centerline{Department of Mathematics, Chamran University, Ahvaz,
Iran} \centerline{a${}_{-}$azarang@scu.ac.ir}

\begin{abstract}
It is proved that if $D$ is a $UFD$ and $R$ is a $D$-algebra,
such that $U(R)\cap D\neq U(D)$, then $R$ has a maximal subring.
In particular, if $R$ is a ring which either contains a unit $x$
which is not algebraic over the prime subring of $R$, or $R$ has
zero characteristic and there exists a natural number $n>1$ such
that $\frac{1}{n}\in R$, then $R$ has a maximal subring. It is
shown that if $R$ is a reduced ring with $|R|>2^{2^{\aleph_0}}$
or $J(R)\neq 0$, then any $R$-algebra has a maximal subring.
Residually finite rings without maximal subrings are fully
characterized. It is observed that every uncountable $UFD$ has a
maximal subring. The existence of maximal subrings in a
noetherian integral domain $R$, in relation to either the
cardinality of the set of divisors of some of its elements or the
height of its maximal ideals, is also investigated.
\end{abstract}

\section*{Introduction}

All rings in this article are commutative with $1\neq 0$; all
modules are unital. If $S$ is a subring of a ring $R$, then
$1_R\in S$. In this paper the characteristic of a ring $R$ is
denoted by $Char(R)$, and the set of all maximal ideals of a ring
$R$ is denoted by $Max(R)$. For any ring $R$, let
$Z=\mathbb{Z}\cdot 1_R=\{n\cdot 1_R\ |\ n\in \mathbb{Z} \}$, be
the prime subring of $R$. Rings with maximal subrings are called
submaximal rings in \cite{azkrm} and \cite{azkarm4}. Some
important rings such as uncountable artinian rings,
zero-dimensional rings which are either not integral over $Z$ or
with zero characteristic, noetherian rings $R$ with
$|R|>2^{\aleph_0}$ and infinite direct product of rings are
submaximal, see [4-7]. We should remind the reader that all finite
rings except $\mathbb{Z}_n$ (up to isomorphism), where $n$ is a
natural number, are submaximal. It is also interesting to note
that whenever $S$ is a finite maximal subring of a ring $R$, then
$R$ must be finite, see \cite[Theorem 8]{bl1}, \cite{laffey},
\cite{klein} and \cite{lee}. The latter interesting fact is also
an easy consequence of \cite[the proof of Theorem 2.9 ]{azkrm2}
or \cite[Theorem 3.8]{azkarm3}. Recently S.S. Korobkov determined
which finite rings have exactly two maximal subrings, see
\cite{kor}.\\

We remind the reader that whenever $S$ is a maximal subring of a
ring $R$, then $R$ is called minimal ring extension of $S$.
Recently, D.E. Dobbs and J. Shapiro have extended the results in
\cite{frd}, to integral domains and certain commutative rings, see
\cite{db8} and \cite{db9}, respectively. Also see \cite{abmin},
\cite{cahen2} and \cite{lucas}. T.G. Lucas, in \cite{lucas},
characterized minimal ring extensions of certain commutative rings
especially in the case of minimal integral extension. It is
interesting to know that every commutative ring $R$ has a minimal
ring extension, for if $M$ is a simple $R$-module then the
idealization $R(+)M$ is a minimal ring extension of $R$ (note, for
any $R$-module $M$, every $R$-subalgebra of $R(+)M$ has the form
$R(+)N$, where $N$ is a submodule of $M$, see \cite{db4}). For a
generalization of minimal ring extensions, see also
\cite{cahen1}.\\

Unlike maximal ideals (resp. minimal ring extension) whose
existence is guaranteed either by Zorn Lemma or noetherianity of
rings (resp. by idealization or other techniques, see \cite{db4}),
maximal subrings need not always exist, see \cite{azkarm4} for
such examples and in particular, for example of rings of any
infinite cardinality, which are not submaximal. In fact by the
above comment about the idealization, one can easily see that if
$K$ is any field with zero characteristic, then the ring
$\mathbb{Z}(+)K$ is not submaximal, see \cite[Example
3.19]{azkarm4}. Also, in the latter reference and in \cite{azkrm}
a good motivations for the study of maximal subrings related to
algebraic geometry and elliptic curves are given.\\

In this paper, we are merely interested in finding submaximal
integral domains, especially atomic and noetherian integral
domains. A brief outline of this paper is as follow. Section 1,
contains some preliminaries and also some generalizations of
results which are to be appeared in \cite{azkarm4}. It is
observed that whenever $D$ is a $UFD$ and $R$ is a $D$-algebra in
which a non-unit of $D$ is invertible, then $R$ is submaximal. In
particular, if $R$ is a $\mathbb{Z}$-algebra such that
$\mathbb{Q}\cap R\neq \mathbb{Z}$, then $R$ is submaximal.
Moreover, if $D$ is a $PID$ and $D\subseteq R$ is an integral
domain such that $D$ is integrally closed in $R$ and $U(R)\neq
U(D)$, then $R$ is submaximal. Consequently it is proved that, if
$R$ is a $\mathbb{Z}$-algebra, then either $R$ is submaximal or
for any prime number $p$, there exists a maximal ideal $M$ of $R$
such that $Char(\frac{R}{M})=p$. It is observed that every ring
either is submaximal or is Hilbert. In particular, if $R$ is a
reduced ring with $|R|>2^{2^{\aleph_0}}$ or $J(R)\neq 0$, then
any $R$-algebra is submaximal. Consequently, it is shown that if
$R$ is a reduced non-submaximal ring with zero characteristic,
then $\bigcap_{p\in\mathbb{P}}Rp=0$. It is proved that if $R$ is a
residue finite non-submaximal ring, then $R$ is a countable
principal ideal ring which is either an integral domain with zero
characteristic or it is an artinian ring with nonzero
characteristic. Finally in Section 1, the existence of maximal
subring in semi-local rings and localization of rings are
investigated. In particular, it is proved that if $R$ is a ring
and $S$ is a multiplicatively closed set in $R$ such that $R_S$
is semi-local, then either $R_S$ is submaximal or every prime
ideal of $R_S$ has the form $P_S$, for some $P\in Max(R)\cap
Min(R)$. Moreover, in the latter case, if $R_S$ is submaximal,
then $R$ is submaximal too. Section 2, is devoted to the
existence of maximal subrings in unique factorization domains,
noetherian integral domains and certain atomic domains. It is
observed that, every uncountable $UFD$ is submaximal. We also
generalized the latter result to certain uncountable atomic
domains. In particular, it is proved that if $R$ is an
uncountable noetherian $\mathbb{Z}$-algebra, in which every
natural number has at most countably many (irreducible) divisors,
then $R$ is submaximal. It is shown that, if $R$ is a noetherian
integral domain with zero characteristic and $tr.deg_Z R=n\geq 1$
(resp. with nonzero characteristic and $tr.deg_Z R=n\geq 2$) such
that the height of every maximal ideal of $R$ is greater or equal
to $n+1$ (resp. greater or equal to $n$) and $Z[X]\subseteq R$ is
a residually algebraic extension, where $X$ is a transcendental
basis for $R$ over $Z$, then $R$ is submaximal. Finally, we show
that every uncountable Dedekind
domain $D$ with $|Max(D)|\leq \aleph_0$, is submaximal.\\

Next, let us recall some standard definitions and notations in
commutative rings, see \cite{kap}, which are used in the sequel.
An integral domain $D$ is called $G$-domain if the quotient field
of $D$ is finitely generated as a ring over $D$. A prime ideal $P$
of a ring $R$ is called $G$-ideal if $\frac{R}{P}$ is a
$G$-domain. A ring $R$ is called Hilbert if every $G$-ideal of
$R$ is maximal. We also call a ring $R$, not necessarily
noetherian, semi-local (resp. local) if $Max(R)$ is finite (resp.
$|Max(R)|=1$). An integral domain $D$ is called atomic, if every
nonzero non-unit of $D$ is a finite product of irreducible
elements, not necessarily unique. An integral domain $D$ is
called $idf$-domain if every nonzero non-unit element of $D$ has
at most finitely many irreducible divisors, see \cite{anderson}.
In this paper the set of minimal prime ideals and prime ideals of
a ring $R$ are denoted by $Min(R)$ and $Spec(R)$, respectively. As
usual, let $U(R)$ denote the set of all units of a ring $R$. The
Jacobson and the nil radical ideals of a ring $R$ are also
denoted by $J(R)$ and $N(R)$, respectively.  If $P$ is a prime
ideal of a ring $R$, then the height of $P$ is denoted by
$ht(P)$. If $D$ is an integral domain, then we denote the set of
all non-associate irreducible elements of $D$ by $Ir(D)$. We
recall that if $D\subseteq R$ is an extension of integral
domains, then as for the existence a transcendental basis for
field extensions, one can easily see that there exists a subset
$X$ of $R$ which is algebraically independent over $D$ and $R$ is
algebraic over $D[X]$ (hence every integral domain is algebraic
over a $UFD$). Moreover, in the latter case $|X|=tr.deg_F(E)$,
where $E$ and $F$ are the quotient fields of $R$ and $D$,
respectively. Hence, similar to the field extensions, we can
define the transcendental degree of $R$ over $D$ which is denoted
by $tr.deg_D(R)$. Finally, we denote the set of all natural prime
numbers by $\mathbb{P}$.

\section{Preliminaries and Generalizations}
We begin this section with the following useful fact about the
existence of maximal subrings in subrings of a submaximal ring,
which is the converse of \cite[Proposition 2.1]{azkarm3}. We
remind the reader that a ring $R$ is submaximal if and only if
there exist a proper subring $S$ of $R$ and an element $x\in
R\setminus S$ such that $S[x]=R$, see \cite[Theorem
2.5]{azarang}. Now the following is in order, and although its
proof is in \cite{azkarm4}, but we present it for the sake of the
reader.

\begin{prop}\label{pp1}
\cite[Theorem 2.19]{azkarm4}. Let $R\subseteq T$ be rings. If
there exists a maximal subring $V$ for $T$ such that $V$ is
integrally closed in $T$ and $U(R)\nsubseteq V$, then $R$ is
submaximal.
\end{prop}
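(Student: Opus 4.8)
The plan is to produce the configuration required by the criterion recalled just above the statement (see \cite[Theorem 2.5]{azarang}): a proper subring $S\subseteq R$ together with an element $x\in R\setminus S$ satisfying $S[x]=R$. Since $U(R)\nsubseteq V$, I first fix a unit $x\in U(R)$ with $x\notin V$; because $x\in T\setminus V$ and $V$ is a maximal subring of $T$, this already yields $V[x]=T$. The natural candidate for $S$ is $W:=V\cap R$, which is a subring of $R$ that omits $x$ and is therefore proper, so the whole problem reduces to proving $W[x]=R$.

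The crucial preliminary step is to show $x^{-1}\in V$, and this is where integral closedness is used. Suppose instead $x^{-1}\notin V$. Then maximality of $V$ forces $V[x^{-1}]=T$, so $x\in V[x^{-1}]$ may be written as $x=\sum_{i=0}^{m}w_i x^{-i}$ with $w_i\in V$; multiplying through by $x^m$ gives the \emph{monic} relation $x^{m+1}-w_0x^m-\cdots-w_m=0$, so $x$ is integral over $V$. Since $V$ is integrally closed in $T$ and $x\in T$, this forces $x\in V$, contradicting the choice of $x$. Hence $x^{-1}\in V$, and in fact $t:=x^{-1}\in V\cap R=W$, since $x^{-1}\in R$ as well.

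With $t=x^{-1}\in V$ in hand, I would exploit the resulting normal form for elements of $T=V[x]$. For any $\sum_{i=0}^{n}c_i x^i$ with $c_i\in V$, the identity $x^i=x^n t^{n-i}$ (valid for $i\le n$ because $xt=1$) gives $\sum_{i=0}^{n}c_i x^i=x^n\bigl(\sum_{i=0}^{n}c_i t^{n-i}\bigr)$, where the parenthesized factor lies in $V$. Thus every element of $T$ has the shape $vx^k$ with $v\in V$ and $k\ge 0$. Now take $r\in R$ and write $r=vx^k$; then $v=r t^{k}\in R$ (as $r,t\in R$) while $v\in V$, so $v\in V\cap R=W$ and hence $r\in W[x]$. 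This proves $R\subseteq W[x]$, i.e. $R=W[x]$, and the criterion then gives that $R$ is submaximal.

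The main obstacle is the step $x^{-1}\in V$: it is the one place where the hypothesis that $V$ is integrally closed in $T$ is genuinely invoked, and it is precisely what upgrades the weak datum ``$x$ is a unit lying outside $V$'' into the usable fact ``$x^{-1}$ lies in $V$.'' Once this is secured, the passage to the normal form $vx^k$ and the conclusion $W[x]=R$ are routine manipulations of the relation $xt=1$ and need no further structural input.
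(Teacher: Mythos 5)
Your proof is correct and follows essentially the same route as the paper's: establish $x^{-1}\in V$ via an integrality argument enabled by the maximality of $V$, then pull every element of $R\subseteq V[x]$ down into $(V\cap R)[x]$ by multiplying with powers of $x^{-1}$. The only cosmetic difference is that you obtain a monic relation for $x$ from $x\in V[x^{-1}]$ and argue by contradiction, whereas the paper obtains one for $x^{-1}$ directly from $x^{-1}\in R\subseteq V[x]$; both uses of integral closedness are equivalent.
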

\begin{proof}
First, we claim that whenever $x\in U(R)\setminus V$, then
$x^{-1}\in V$. To see this, we observe that $x^{-1}\in R\subseteq
T=V[x]$ (note, $V$ is a maximal subring of $T$). Consequently,
$x^{-1}=a_0+a_1x+\cdots+a_nx^n$, where $a_0,a_1,\ldots,a_n\in V$.
Now by multiplying the latter equality by $x^{-n}$, we infer that
$x^{-1}$ is integral over $V$, hence $x^{-1}\in V$. But
$U(R)\nsubseteq V$ implies that $V\cap R$ is a proper subring of
$R$ and there exists $x\in U(R)\setminus V$ with $T=V[x]$.
Finally, we claim that $R=(V\cap R)[x]$, which by the preceding
comment, it implies that $R$ is submaximal. To this end, let
$y\in R$, hence $y\in V[x]$ and therefore
$y=b_0+b_1x+\cdots+b_mx^m$, where $b_0,b_1,\ldots,b_m\in V$,
implies that $yx^{-m}\in V\cap R$ (note, $x^{-1}\in V$), i.e.,
$y\in (R\cap V)[x]$ and we are done.
\end{proof}

Next, we have the following fact which is needed in the sequel.

\begin{thm}\label{pp2}
Let $R$ be a ring and $D$ be a subring of $R$ which is a $UFD$.
If there exists an irreducible element $p\in D$ such that
$\frac{1}{p}\in R$, then $R$ is submaximal. In particular, if
$U(R)\cap D\neq U(D)$, then $R$ is submaximal.
\end{thm}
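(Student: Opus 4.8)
The plan is to reduce everything to Proposition~\ref{pp1} by producing, inside a suitable overring $T$ of $R$, a rank-one valuation ring $V$ that is integrally closed in $T$ and fails to contain $1/p$. First I would dispose of the ``in particular'' clause: if $u\in U(R)\cap D$ with $u\notin U(D)$, then $u$ is a nonzero non-unit of the UFD $D$, so it has an irreducible factor $p$, say $u=pv$ with $v\in D$; since $u^{-1}\in R$ we get $p^{-1}=v\,u^{-1}\in R$, so the hypothesis of the main assertion holds.

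Next I would reduce the general case to the case where $R$ is an integral domain. Since $D\hookrightarrow R$, localizing $R$ at the multiplicative set $D\setminus\{0\}$ yields a nonzero ring, so $R$ has a prime ideal disjoint from $D\setminus\{0\}$, and hence a minimal prime $\mathfrak p$ with $\mathfrak p\cap D=(0)$. Then $R/\mathfrak p$ is a domain into which $D$ embeds with $p$ still irreducible, and the image of $1/p$ inverts $p$ there. I would then invoke the elementary principle that submaximality passes from a quotient up to the ring: if $\bar V$ is a maximal subring of $R/\mathfrak p$, its preimage under $R\to R/\mathfrak p$ is a subring of $R$ containing $\mathfrak p$ and is maximal in $R$ (any subring strictly containing it maps onto a subring of $R/\mathfrak p$ strictly containing $\bar V$, hence onto all of $R/\mathfrak p$). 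Thus it suffices to treat domains.

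Now assume $R$ is a domain with quotient field $L$, and let $K=\mathrm{Frac}(D)\subseteq L$. Because $D$ is a UFD and $p$ is irreducible, $pD$ is a height-one prime, so $A:=D_{(p)}$ is a discrete valuation ring with $p\in A$ and $1/p\notin A$; being a rank-one valuation ring, $A$ is integrally closed in $K$, and since its only overrings in $K$ are $A$ and $K$, it is in fact a maximal subring of $K$. The plan is to transport this to $L$: extend the $(p)$-adic valuation $v_p$ of $K$ to a valuation $w$ of $L$ of rank one, via Chevalley's extension theorem, keeping the value group inside the divisible hull of $\mathbb{Z}$ by extending through Gauss valuations along a transcendence basis of $L/K$ and then algebraically. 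Its valuation ring $V$ is integrally closed in $L=\mathrm{Frac}(V)$ and, being rank one, has only $V$ and $L$ as overrings, hence is a maximal subring of $T:=L$; moreover $w(1/p)=-v_p(p)<0$ gives $1/p\notin V$. Since $1/p\in U(R)$, we have $U(R)\nsubseteq V$, and Proposition~\ref{pp1} applied to $R\subseteq L$ yields that $R$ is submaximal.

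The main obstacle is the valuation-theoretic step: I must guarantee that the extension $w$ of $v_p$ to the possibly transcendental extension $L/K$ can be chosen of rank one, since only then is $V$ genuinely a maximal subring of $L$ (a higher-rank valuation ring acquires intermediate overrings and is no longer maximal). Keeping $w(p)>0$ throughout the extension is exactly what forces $1/p$ outside $V$ and drives the argument. By comparison, the reduction to domains and the fact that a rank-one valuation ring is a maximal subring of its fraction field are routine.
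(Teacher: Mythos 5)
Your argument is correct, and its overall skeleton coincides with the paper's: reduce to the case where $R$ is an integral domain via a prime disjoint from $D\setminus\{0\}$, produce a maximal subring of an overfield of $R$ that is integrally closed there and omits $\frac{1}{p}$, and finish with Proposition~\ref{pp1}; your treatment of the ``in particular'' clause (factor a non-unit $u\in U(R)\cap D$ and invert one irreducible factor) is exactly what the paper leaves as ``evident.'' Where you genuinely diverge is in how the required maximal subring is produced. The paper first passes to a transcendence basis $X$ of $R$ over $D$ and replaces $D$ by the UFD $D[X]$, in which $p$ stays irreducible, so as to assume $R$ algebraic over $D$; it then takes $V=D_{(p)}$ as a maximal subring of $K=\mathrm{Frac}(D)$ and cites \cite[Proposition 2.1]{azkarm3} to lift $V$ to a maximal subring $W$ of $\mathrm{Frac}(R)$ with $W\cap K=V$, whence $\frac{1}{p}\notin W$. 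You instead build the lift explicitly: extend the $p$-adic valuation of $K$ to a rank-one valuation $w$ of $L=\mathrm{Frac}(R)$ (Gauss extensions along a transcendence basis keep the value group equal to $\mathbb{Z}$, and the subsequent algebraic extension keeps it inside $\mathbb{Q}$, hence of rank one), and use the facts that a rank-one valuation ring is integrally closed in $L$ and has no overrings other than itself and $L$, so it is a maximal subring of $L$, with $w(p)>0$ forcing $\frac{1}{p}\notin V$. Your route is self-contained modulo standard valuation theory and handles the algebraic and transcendental cases uniformly, and it makes transparent why the lifted maximal subring still excludes $\frac{1}{p}$; the paper's route is shorter because it delegates precisely this lifting step to its earlier machinery. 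The auxiliary claims you invoke (preimages of maximal subrings under surjections are maximal subrings, overrings of a valuation domain are its localizations, rank is preserved under passage to the divisible hull) are all standard and correctly applied, so I see no gap.
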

\begin{proof}
We first prove the theorem by the assumption that $R$ is algebraic
over $D$. We also may assume that $R$ is an integral domain
(note, if not, then there exists a prime ideal $Q$ of $R$ such
that $D\cap Q=0$ and therefore $\frac{R}{Q}$ contains a copy of
$D$). Now suppose that $K$ and $E$ are the quotient fields of $D$
and $R$, respectively. Thus $K/E$ is an algebraic extension,
since $R$ is algebraic over $D$. Now, note that $K$ has a maximal
subring $V$ such that $\frac{1}{p}\notin V$ (for example
$V=D_{(p)}$). Hence $E$ has a maximal subring $W$ such that
$W\cap K=V$, by \cite[Proposition 2.1]{azkarm3}. Therefore
$\frac{1}{p}\notin W$. Thus we have $U(R)\nsubseteq W$ which
implies that $R$ is submaximal by the above proposition. Finally,
assume that $R$ is not algebraic over $D$, but by the preceding
comment we may suppose that $R$ is an integral domain too. Let
$X$ be a transcendental basis for $R$ over $D$. Thus $R$ is
algebraic over $D[X]$. Now note that $D[X]$ is a $UFD$ and $p$ is
an irreducible element in it. Hence we are done by the first part
of the proof. The final part is evident.
\end{proof}

The following fact also justifies the two cases proofs of
\cite[Proposition 2.10]{azkrm}.

\begin{rem}\label{remx1}
Let $R$ be a ring satisfying the conditions of the above theorem,
then there exists a maximal subring of $R$ which dose not contain
$\frac{1}{p}$. In particular, if $K$ is a field with zero
characteristic, then for any prime number $p$, there exists a
maximal subring $V_p$ of $K$ such that $\frac{1}{p}\notin V_p$.
Hence if $M$ is the unique nonzero prime ideal of $V_p$, we infer
that $Char(\frac{V_p}{M})=p$.
\end{rem}

The next three interesting facts are now immediate.

\begin{cor}\label{corx2}
Let $R$ be a $UFD$ and $S$ be a multiplicatively closed subset of
$R$ which contains a non-unit of $R$, then $R_S$ is submaximal.
\end{cor}

\begin{cor}\label{pp3}
Let $R$ be a ring with zero characteristic. If there exists a
natural number $n>1$ such that $\frac{1}{n}\in R$, then $R$ is
submaximal.
\end{cor}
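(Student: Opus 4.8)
The plan is to reduce Corollary~\ref{pp3} to the Theorem~\ref{pp2} that precedes it, since the hypothesis is tailor-made for that purpose. The key observation is that the prime subring $Z=\mathbb{Z}\cdot 1_R$ sits inside $R$, and because $Char(R)=0$, the map $\mathbb{Z}\to R$ sending $m\mapsto m\cdot 1_R$ is injective, so $Z\cong\mathbb{Z}$ as a ring. Thus $Z$ is a copy of $\mathbb{Z}$ inside $R$, and $\mathbb{Z}$ is a $UFD$ (indeed a $PID$). This puts us exactly in the setting of Theorem~\ref{pp2} with $D=Z$.

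Next I would locate an irreducible element of $D=Z$ whose inverse lies in $R$. Since $n>1$, write $n=p_1^{e_1}\cdots p_k^{e_k}$ as a product of rational primes with $k\geq 1$ and each $e_i\geq 1$. Fix any prime $p=p_1$ dividing $n$. The corresponding element $p\cdot 1_R$ is irreducible in $Z\cong\mathbb{Z}$. I claim that $\frac{1}{p\cdot 1_R}\in R$: from $\frac{1}{n}\in R$ we get $\frac{1}{p}=\frac{n/p}{n}=(n/p)\cdot\frac{1}{n}\in R$, where $n/p=p_1^{e_1-1}p_2^{e_2}\cdots p_k^{e_k}$ is an integer, so this element is genuinely a product of the integer $(n/p)\cdot 1_R$ with $\frac{1}{n}\in R$, hence lies in $R$. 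Here I am using that $p\cdot 1_R$ really is invertible in $R$ with inverse the stated element, which is immediate once we verify $(p\cdot 1_R)\bigl((n/p)\cdot\tfrac{1}{n}\bigr)=n\cdot\tfrac{1}{n}=1_R$.

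With an irreducible $p\in D$ satisfying $\frac{1}{p}\in R$ in hand, Theorem~\ref{pp2} applies directly and yields that $R$ is submaximal, completing the proof. The argument is short precisely because all the real work — constructing a maximal subring from a unit that inverts an irreducible of a $UFD$ subring — has already been carried out in Theorem~\ref{pp2} via Proposition~\ref{pp1} and the integrally-closed maximal subring lifting of \cite[Proposition 2.1]{azkarm3}.

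The only point requiring a little care, rather than being a genuine obstacle, is the verification that $Z$ is honestly a $UFD$ and not merely a homomorphic image of $\mathbb{Z}$; this is exactly where the zero-characteristic hypothesis is used, since in nonzero characteristic $Z$ would be some $\mathbb{Z}_m$, which need not be a domain and typically fails to be a $UFD$ in the required sense, and moreover $\frac{1}{n}\in R$ carries no force when $n$ is already invertible for trivial reasons. So I would state explicitly that $Char(R)=0$ gives $Z\cong\mathbb{Z}$, note that $\mathbb{Z}$ is a $UFD$ with the rational primes as its irreducibles, and then invoke Theorem~\ref{pp2}.
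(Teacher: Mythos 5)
Your proposal is correct and is exactly the argument the paper intends: the corollary is stated as an immediate consequence of Theorem~\ref{pp2}, and you supply precisely the expected details (zero characteristic gives $Z\cong\mathbb{Z}$, a prime divisor $p$ of $n$ is irreducible in $Z$, and $\frac{1}{p}=(n/p)\cdot\frac{1}{n}\in R$). No discrepancies to report.
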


\begin{cor}\label{corx6}
Let $D$ be an integral domain with zero characteristic and $X$ be
a set of independent indeterminates over it. Then for any $x\in
X$ and every natural number $n>1$, the ring
$\frac{D[X]}{(nx-1)D[X]}$ is submaximal.
\end{cor}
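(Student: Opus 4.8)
The plan is to reduce Corollary \ref{corx6} to an application of Corollary \ref{pp3} by producing, inside the quotient ring $\frac{D[X]}{(nx-1)D[X]}$, an inverse for the integer $n$. Write $R = \frac{D[X]}{(nx-1)D[X]}$ and let $\bar{d}$ denote the image in $R$ of an element $d \in D[X]$. First I would verify that $R$ has zero characteristic: since $D$ is an integral domain of zero characteristic, $D[X]$ has zero characteristic, and I must check that no nonzero integer $m$ lands in the ideal $(nx-1)D[X]$; if $m = (nx-1)f$ for some $f \in D[X]$, then comparing degrees in the variable $x$ (treating $D[X \setminus \{x\}]$ as the coefficient domain) forces $f = 0$, whence $m = 0$. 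So $Z = \mathbb{Z}\cdot 1_R$ injects and $Char(R) = 0$.

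The key step is exhibiting $\frac{1}{n} \in R$. In $R$ we have the relation $n\bar{x} = \bar{1}$ coming directly from $nx - 1 \in (nx-1)D[X]$. Hence $\bar{x}$ is a multiplicative inverse of $\bar{n} = n\cdot 1_R$ in $R$, that is, $\frac{1}{n} = \bar{x} \in R$. Since $n > 1$, the hypotheses of Corollary \ref{pp3} are met for the ring $R$: it has zero characteristic and contains $\frac{1}{n}$ with $n > 1$. Therefore $R$ is submaximal, which is exactly the claim.

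The only genuine obstacle is confirming that $R$ is a ring in the sense required by the paper's standing conventions, namely that $1_R \neq 0$, equivalently that the ideal $(nx-1)D[X]$ is proper. This is where the assumption $Char(D) = 0$ (or at least that $n$ is a non-unit structure is handled) matters: if $\bar{1} = \bar{0}$ then $1 \in (nx-1)D[X]$, forcing $nx - 1$ to be a unit in $D[X]$, which is impossible since the units of $D[X]$ are the units of $D$ and $nx - 1$ has positive degree in $x$. Thus the ideal is proper and $R$ is a legitimate ring of zero characteristic, so the argument closes. I would remark in passing that this same relation $n\bar{x} = \bar{1}$ realizes $R$ concretely as (a quotient of) the ring $D[X \setminus \{x\}][\frac{1}{n}]$ adjoined along $x$, which makes the appearance of $\frac{1}{n}$ transparent, but the clean route is simply to cite Corollary \ref{pp3}.
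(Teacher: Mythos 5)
Your proposal is correct and follows exactly the route the paper intends: the paper lists this corollary as ``immediate'' from Corollary \ref{pp3}, and your argument simply supplies the (easy but worthwhile) verifications that the ideal $(nx-1)D[X]$ is proper, that the quotient has zero characteristic, and that the image of $x$ inverts $n$. Nothing is missing and nothing differs in substance from the paper's reasoning.
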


\begin{cor}\label{corx4}
If $R$ is a ring with $0=Char(R)\neq Char(\frac{R}{J(R)})$, then
any $R$-algebra $T$ is submaximal.
\end{cor}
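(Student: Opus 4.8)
The plan is to reduce the statement to Corollary~\ref{pp3}, which already disposes of any single characteristic-zero ring in which the reciprocal of some natural number $n>1$ lives. Thus the first task is to extract, from the hypothesis on $J(R)$, the inverse of a concrete natural number bigger than $1$.

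First I would unwind $Char(\frac{R}{J(R)})=c\neq 0$. By the definition of characteristic this means exactly that $c$ is the least positive integer with $c\cdot 1_R\in J(R)$; in particular $c\geq 2$, since $1_R\notin J(R)$ (as $\frac{R}{J(R)}\neq 0$), so $c+1>1$. Now I invoke the elementary characterization of the Jacobson radical: $a\in J(R)$ if and only if $1-ra\in U(R)$ for every $r\in R$. Applying this to $a=c\cdot 1_R$ and $r=-1_R$ gives $(c+1)\cdot 1_R=1+c\cdot 1_R\in U(R)$, that is, $\frac{1}{c+1}\in R$.

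Next I would push this to an arbitrary $R$-algebra $T$, understood (consistently with the paper's usage, e.g. in Theorem~\ref{pp2}) as a ring extension $R\subseteq T$. Since $1_R=1_T$, a unit of $R$ is still a unit of $T$, so $(c+1)\cdot 1_T$ is invertible and $\frac{1}{c+1}\in T$. Moreover $Char(T)=0$, because the prime subring $\mathbb{Z}\cdot 1_R$, which is torsion-free as $Char(R)=0$, sits inside $T$. With $Char(T)=0$, $\frac{1}{c+1}\in T$, and $c+1>1$ in hand, Corollary~\ref{pp3} applies directly and yields that $T$ is submaximal; since $T$ was arbitrary, every $R$-algebra is submaximal.

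There is essentially one step with content, the production of the unit $(c+1)\cdot 1_R$, and the main point to watch is simply reading the hypothesis correctly: $Char(\frac{R}{J(R)})\neq 0$ is nothing but the membership $c\cdot 1_R\in J(R)$, after which the Jacobson-radical criterion hands over a unit. The only other thing to keep straight is that ``$R$-algebra'' must mean a faithful extension $R\subseteq T$, so that the characteristic-zero hypothesis transfers to $T$; without this one could take $T=\frac{R}{J(R)}$ (for instance $R=\mathbb{Z}_{(p)}$, $T=\mathbb{F}_p$), which is not submaximal.
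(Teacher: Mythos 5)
Your proof is correct and follows essentially the same route as the paper: the paper likewise observes that $n=Char(\frac{R}{J(R)})$ lies in $J(R)$, hence $1-kn\in U(R)\subseteq U(T)$ for all $k\in\mathbb{Z}$, and concludes by Corollary~\ref{pp3}. Your version merely makes explicit the choice $k=-1$ and the (implicitly assumed) facts that an $R$-algebra contains $R$ and therefore inherits zero characteristic.
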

\begin{proof}
Assume that $Char(\frac{R}{J(R)})=n$, thus $n\in J(R)$. Hence for
any $k\in \mathbb{Z}$, we have $1-kn\in U(R)\subseteq U(T)$ and
therefore we are done by Corollary \ref{pp3}.
\end{proof}

\begin{cor}\label{plemg1}
Let $R$ be a ring with zero characteristic which is not
submaximal. Then $\{Char(\frac{R}{M})\ |\ M\in Max(R)\
\}=\mathbb{P}$ and therefore $|Max(R)|$ is infinite.
\end{cor}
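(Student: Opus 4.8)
The plan is to establish the displayed equality by proving the two inclusions separately and then to read off the cardinality statement. Throughout I would use that $Char(R)=0$ makes the prime subring $Z$ a copy of $\mathbb{Z}$, so that each prime $p\in\mathbb{P}$ is realized by the element $p\cdot 1_R\in R$, together with the standard fact that the characteristic of a field is either $0$ or a prime.

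For the inclusion $\mathbb{P}\subseteq\{Char(R/M)\mid M\in Max(R)\}$, I would fix $p\in\mathbb{P}$ and argue that $p\cdot 1_R$ is a non-unit: since $R$ is not submaximal and $Char(R)=0$, the contrapositive of Corollary \ref{pp3} shows $\frac{1}{n}\notin R$ for every $n>1$, in particular $\frac{1}{p}\notin R$. A non-unit lies in some $M\in Max(R)$, so in the field $R/M$ we have $p\cdot 1=0$; as this forces the characteristic to be a positive divisor of the prime $p$ different from $1$, we get $Char(R/M)=p$.

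For the reverse inclusion the only value to exclude is $0$, every other characteristic of a field being prime. Here I would suppose toward a contradiction that $Char(R/M)=0$ for some $M\in Max(R)$. Then $R/M$ is a field of characteristic $0$, hence submaximal by Corollary \ref{pp3} (indeed $\frac{1}{2}\in R/M$), so it has a maximal subring $\bar V$ (one of the $V_p$ of Remark \ref{remx1} would do). Writing $\pi\colon R\to R/M$ for the quotient, I would lift $\bar V$ by setting $V:=\pi^{-1}(\bar V)$ and check that $V$ is a maximal subring of $R$: it is a proper subring containing $M$, and if $V\subsetneq W\subseteq R$ is a subring then $M\subseteq W$ and, choosing $w\in W\setminus V$, we have $\pi(w)\notin\bar V$, so $\pi(W)\supsetneq\bar V$; maximality of $\bar V$ gives $\pi(W)=R/M$ and hence $W=\pi^{-1}(\pi(W))=R$. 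This makes $R$ submaximal, contradicting the hypothesis, so no residue field has characteristic $0$ and the equality $\{Char(R/M)\mid M\in Max(R)\}=\mathbb{P}$ follows.

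Finally, the equality exhibits $M\mapsto Char(R/M)$ as a surjection of $Max(R)$ onto the infinite set $\mathbb{P}$, so $|Max(R)|$ is infinite. I expect the forward inclusion to be routine (a pure non-unit argument), and the genuine content to lie in excluding characteristic $0$: the crux is the pull-back of a maximal subring along $R\to R/M$, i.e.\ the order-preserving correspondence between subrings of $R$ containing $M$ and subrings of $R/M$, which is precisely what converts submaximality of a residue field into submaximality of $R$.
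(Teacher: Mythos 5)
Your proposal is correct and follows essentially the same route as the paper: both inclusions are obtained from Corollary \ref{pp3} (non-units $p\cdot 1_R$ lie in maximal ideals, forcing residue characteristic $p$; and characteristic $0$ residue fields are excluded because they would make $R$ submaximal). The only difference is that you spell out the pull-back of a maximal subring along $R\to R/M$, which the paper leaves implicit when it invokes Corollary \ref{pp3} for the residue field.
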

\begin{proof}
Since $R$ is not submaximal, we infer that $Char(\frac{R}{M})\neq
0$ for each maximal ideal $M$ of $R$, by Corollary \ref{pp3}.
Hence $\{Char(\frac{R}{M})\ |\ M\in Max(R)\ \}\subseteq
\mathbb{P}$. Now for each prime number $q$ we claim that there
exists a maximal ideal $M$ of $R$ with $Char(\frac{R}{M})=q$
which proves the lemma. To see this, we note that $qR\neq R$, by
Corollary \ref{pp3}. Consequently, there exists a maximal ideal
$M$ of $R$ with $qR\subseteq M$, i.e., $Char(\frac{R}{M})=q$.
\end{proof}

For more observations we need the following lemma.

\begin{lem}\label{qcufd}
Let $R$ be a ring and $x\in R$ is not algebraic over the prime
subring of $R$. Then at least one of the following conditions
holds.
\begin{enumerate}
\item If $Char(R)=0$, then there exists a prime ideal $Q$ of $R$
such that $R/Q$ contains a copy of $\mathbb{Z}[x]$.
\item If $Char(R)=n>0$, then for any prime divisor $p$ of $n$,
there exists a prime ideal $Q$ of $R$ such that $R/Q$ contains a
copy of $\mathbb{Z}_p[x]$.
\end{enumerate}
\end{lem}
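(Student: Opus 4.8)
The plan is to work throughout with the subring $A = Z[x]$ of $R$, where $Z$ denotes the prime subring. Since $x$ is not algebraic over $Z$, the evaluation homomorphism $Z[T] \to A$ sending $T \mapsto x$ is injective, so $A \cong Z[T]$ is a polynomial ring over $Z$. The basic engine I would isolate first is the following elementary fact: if $B \subseteq R$ is a subring which happens to be an integral domain, then $S = B \setminus \{0\}$ is a multiplicatively closed subset of $R$ with $0 \notin S$, so $R_S \neq 0$; any prime of $R_S$ contracts to a prime $Q$ of $R$ with $Q \cap S = \emptyset$, i.e. $Q \cap B = 0$, and then the composite $B \hookrightarrow R \twoheadrightarrow R/Q$ is injective, so $R/Q$ contains an isomorphic copy of $B$.

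For part (1), where $Char(R) = 0$, I would simply take $B = A \cong \mathbb{Z}[x]$, which is a domain because $Z \cong \mathbb{Z}$. The engine above immediately yields a prime $Q$ with $Q \cap A = 0$, so $R/Q$ contains a copy of $\mathbb{Z}[x]$. This case is essentially immediate.

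For part (2), where $Char(R) = n > 0$ and $p \mid n$, write $m = n/p$ and pass to $\bar R = R/pR$. One checks that $pR \neq R$ (indeed $p \cdot 1_R$ is a zero divisor, since $(p\cdot 1_R)(m\cdot 1_R) = n\cdot 1_R = 0$ while $m\cdot 1_R \neq 0$), so $\bar R \neq 0$ and $Char(\bar R) = p$, making the prime subring of $\bar R$ a copy of $\mathbb{Z}_p$. The decisive step is to show that the image $\bar x$ of $x$ is transcendental over $\mathbb{Z}_p$ in $\bar R$, i.e. $\mathbb{Z}_p[\bar x] \cong \mathbb{Z}_p[T]$. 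Granting this, $\mathbb{Z}_p[\bar x]$ is a domain inside $\bar R$, and applying the engine to $\bar R$ with $B = \mathbb{Z}_p[\bar x]$ produces a prime $\bar Q$ of $\bar R$ with $\bar Q \cap \mathbb{Z}_p[\bar x] = 0$; pulling $\bar Q$ back to a prime $Q \supseteq pR$ of $R$ gives $R/Q \cong \bar R/\bar Q$ containing a copy of $\mathbb{Z}_p[x]$, as required.

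The main obstacle is exactly this transcendence of $\bar x$ over $\mathbb{Z}_p$, and it is here that I expect to genuinely use the hypothesis that $x$ is transcendental over all of $Z \cong \mathbb{Z}_n$, not merely over $\mathbb{Z}_p$. Suppose toward a contradiction that $\sum_{i=0}^d \bar a_i \bar x^{i} = 0$ in $\bar R$ with $\bar a_i \in \mathbb{Z}_p$ and $\bar a_d \neq 0$. Lifting the coefficients to integers $a_i$ (so $p \nmid a_d$), this says $\sum_{i=0}^d a_i x^i \in pR$, say $\sum_{i=0}^d a_i x^i = p r$ for some $r \in R$. The trick is to multiply through by $m = n/p$: since $(pm)\cdot 1_R = n\cdot 1_R = 0$, the right-hand side collapses to $pmr = 0$, leaving $\sum_{i=0}^d (m a_i) x^i = 0$ in $R$. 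This is a polynomial relation for $x$ with coefficients in $Z$, and its top coefficient $m a_d \bmod n$ is nonzero because $p \nmid a_d$ forces $n \nmid m a_d$. Thus $x$ satisfies a nonzero polynomial over $Z$, contradicting the hypothesis, so $\bar x$ is transcendental over $\mathbb{Z}_p$ and the proof goes through.
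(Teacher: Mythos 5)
Your proof is correct. Part (1) and the ``engine'' you isolate (for a subring $B\subseteq R$ that is a domain, localize $R$ at $B\setminus\{0\}$ and contract a prime) coincide exactly with the paper's treatment of the zero and prime characteristic cases; where you diverge is the composite characteristic case. The paper never forms $R/pR$: it works inside $A=\mathbb{Z}_n[x]\cong\mathbb{Z}_n[T]$, notes that $\dim\mathbb{Z}_n[T]=1$ and that $P=p\mathbb{Z}_n[T]$ is a non-maximal, hence minimal, prime of $A$, and then invokes the standard fact that a minimal prime of a subring is the contraction of a prime of the overring to obtain $Q$ with $Q\cap A=P$ and $A/P\cong\mathbb{Z}_p[T]\hookrightarrow R/Q$. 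You instead reduce modulo $pR$ first, which forces you to verify that $\bar x$ remains transcendental over $\mathbb{Z}_p$; your multiply-by-$m$ trick does this correctly, since $n=pm$ divides $ma_d$ only if $p$ divides $a_d$, so $\sum_i(ma_i)x^i=0$ is a genuine nonzero relation over $\mathbb{Z}_n$, contradicting the hypothesis. Both arguments ultimately produce a prime $Q$ of $R$ with $Q\cap\mathbb{Z}_n[x]=p\mathbb{Z}_n[x]$; yours buys elementarity (no dimension count, no lying-over for minimal primes) at the cost of the explicit transcendence computation, while the paper gets that transcendence for free because the quotient by $p$ is only ever taken inside the honest polynomial ring $\mathbb{Z}_n[T]$, at the price of quoting two small pieces of commutative-algebra machinery.
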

\begin{proof}
If $R$ has zero (or prime) characteristic, we are done since
$Z[x]\setminus \{0\}$ is a multiplicatively closed set in $R$.
Now, suppose that $R$ has nonzero characteristic, say $n$, which
is also not a prime number. Assume that $p$ is a prime divisor of
$n$. Since $\dim \mathbb{Z}_n[x]=1$ and
$P=\frac{p\mathbb{Z}}{n\mathbb{Z}}[x]$ is a non-maximal prime
ideal of $\mathbb{Z}_n[x]$, hence we infer that $P$ is a minimal
prime ideal of $\mathbb{Z}_n[x]$. Thus, there exists a minimal
prime ideal $Q$ of $R$ such that $Q\cap\mathbb{Z}_n[x]=P$. Now we
have $\mathbb{Z}_p[x]\cong\frac{\mathbb{Z}_n[x]}{Q\cap
\mathbb{Z}_n[x]}\subseteq \frac{R}{Q}$ and therefore we are done.
\end{proof}

\begin{rem}
In fact in Corollary \ref{plemg1}, we see that if $R$ is not
submaximal and $\mathbb{Z}\subseteq R$, then $|Max(R)|\geq
|\mathbb{P}|$. We can generalize the previous fact to any
non-submaximal ring which contains a $UFD$ as follow. First, we
recall the reader that if $R$ is a ring with
$|Max(R)|>2^{\aleph_0}$, then $R$ is submaximal, see
\cite[Proposition 2.6 ]{azkrm}. Now assume that $D$ is a $UFD$
and let $Ir'(D)$ be a subset of $Ir(D)$ such that for any $p\neq
q$ in $Ir'(D)$, we have $pD+qD=D$. Now, if $R$ is a ring which
contains $D$, then either $R$ is submaximal or $|Ir'(D)|\leq
|Max(R)|\leq 2^{\aleph_0}$. To see this assume that $R$ is not
submaximal, then for any $q\in Ir'(D)$ we have $qR\neq R$, by
Theorem \ref{pp2} and hence there exists a maximal ideal $M_q$ of
$R$, such that $qR\subseteq M_q$. It is clear that whenever
$p\neq q$ in $Ir'(D)$, then we have $M_q\neq M_p$, and therefore
we are done. In particular, if $R$ is a non-submaximal ring with
nonzero characteristic, say $n$, which is not algebraic over
$\mathbb{Z}_n$, then $|Max(R)|$ is infinite. To see this note
that by part $(2)$ of the above lemma, for any prime divisor $p$
of $n$ there exists a prime ideal $Q$ of $R$ such that $R/Q$
contains a copy of $\mathbb{Z}_p[x]$. Hence we are done by the
first part of the proof.
\end{rem}

The following proof greatly simplifies the proof of \cite[Theorem
2.1 and Theorem 2.4]{azkarm4}.

\begin{cor}\label{p1}
\cite[Theorem 2.4]{azkarm4}. Let $R$ be a ring with a unit element
$x\in R$ which is not algebraic over $Z$. Then $R$ is submaximal
(in fact every $R$-algebra is submaximal).
\end{cor}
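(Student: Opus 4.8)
The plan is to reduce the corollary to the already-proven Theorem~\ref{pp2}. The hypothesis gives a unit $x \in U(R)$ that is not algebraic over the prime subring $Z = \mathbb{Z}\cdot 1_R$. My strategy is to produce inside $R$ a $UFD$ subring $D$ together with an irreducible element $p \in D$ whose inverse lies in $R$, so that Theorem~\ref{pp2} applies directly and yields that $R$ is submaximal. The natural candidate for $D$ is a polynomial ring in the transcendental element $x$ over the prime subring.

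First I would split into the two characteristic cases supplied by Lemma~\ref{qcufd}, but actually I want to work with $x$ itself rather than a quotient. Consider the subring $Z[x] \subseteq R$. Since $x$ is not algebraic over $Z$, the element $x$ is transcendental over the prime subring, so $Z[x]$ is isomorphic to a polynomial ring over $Z$. When $\mathrm{Char}(R) = 0$ we have $Z \cong \mathbb{Z}$, and when $\mathrm{Char}(R) = p$ is prime we have $Z \cong \mathbb{Z}_p$; in either of these cases $Z$ is a $UFD$ (indeed $\mathbb{Z}$ or a field), hence $Z[x] \cong Z[T]$ is a $UFD$ by Gauss's theorem. The key observation is that the transcendental $x$ is itself an \emph{irreducible} element of $Z[x]$ (it is the image of the indeterminate $T$, which is irreducible in $Z[T]$), and by hypothesis $x$ is a \emph{unit} of $R$, so $\tfrac{1}{x} \in R$. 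Taking $D = Z[x]$ and $p = x$, the hypotheses of Theorem~\ref{pp2} are met, and therefore $R$ is submaximal.

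The one genuinely delicate point is the case of composite nonzero characteristic $\mathrm{Char}(R) = n > 0$ with $n$ not prime, since then $Z \cong \mathbb{Z}_n$ is not an integral domain and $Z[x]$ need not be a $UFD$. This is exactly where Lemma~\ref{qcufd}(2) enters. For a prime divisor $p \mid n$, the lemma produces a prime ideal $Q$ of $R$ with $R/Q \supseteq \mathbb{Z}_p[\bar{x}]$, where $\bar{x}$ is the image of $x$ and is transcendental over $\mathbb{Z}_p$. Now $\mathbb{Z}_p[\bar{x}]$ is a $UFD$ with $\bar{x}$ irreducible; but I must check that $\tfrac{1}{\bar{x}}$ lives in $R/Q$, which it does since $x \in U(R)$ forces $\bar{x} \in U(R/Q)$. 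Thus Theorem~\ref{pp2} shows $R/Q$ is submaximal. To pass back to $R$ itself I would invoke the fact, recorded in the proof of Theorem~\ref{pp2} (and following from Proposition~\ref{pp1} together with \cite[Proposition 2.1]{azkarm3}), that submaximality lifts along the quotient $R \twoheadrightarrow R/Q$ when the relevant unit is preserved; more cleanly, one can apply Theorem~\ref{pp2} with $D = \mathbb{Z}_p[x] \subseteq R$ directly once one knows $Z[x]$ contains this $UFD$, avoiding the quotient entirely.

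I expect the main obstacle to be handling the composite-characteristic case without losing track of irreducibility: in $\mathbb{Z}_n[x]$ the element $x$ may fail to generate a prime ideal, so I cannot naively apply Theorem~\ref{pp2} over $\mathbb{Z}_n[x]$, and the honest route is through the quotient $R/Q$ furnished by Lemma~\ref{qcufd}, then back up to $R$. Once the unit is seen to survive in the quotient and the quotient is submaximal, the parenthetical claim that \emph{every $R$-algebra} is submaximal follows because any $R$-algebra $T$ inherits the unit $x \in U(R) \subseteq U(T)$ transcendental over $Z$, so the same argument applies verbatim with $R$ replaced by $T$. The bulk of the write-up is therefore routine; the only care required is the bookkeeping on characteristic and the irreducibility of the transcendental element.
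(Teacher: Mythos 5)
Your proof is correct and follows essentially the paper's own (very terse) route: when the prime subring is a domain you apply Theorem~\ref{pp2} directly to $D=Z[x]$ with the irreducible unit $x$, and for composite characteristic you use Lemma~\ref{qcufd} to pass to a quotient $R/Q$ containing $\mathbb{Z}_p[\bar x]$, apply Theorem~\ref{pp2} there, and pull the maximal subring back along $R\twoheadrightarrow R/Q$ (the same reduction already used inside the proof of Theorem~\ref{pp2}). One caveat: your ``more cleanly'' aside does not work, since for composite $n=\mathrm{Char}(R)$ the ring $\mathbb{Z}_p[x]$ is not a unital subring of $R$ (its identity would be a nontrivial idempotent of $\mathbb{Z}_n$, not $1_R$), so the quotient detour you rightly call the honest route is in fact unavoidable in that case.
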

\begin{proof}
In view of Theorem \ref{pp2} and Lemma \ref{qcufd} we are done.
\end{proof}

\begin{cor}\label{p2}
Let $R$ be a ring. Then either $R$ is submaximal or $J(R)$ is
algebraic over $Z$.
\end{cor}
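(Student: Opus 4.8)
The plan is to argue by contraposition: assuming $R$ is not submaximal, I will show that every element of $J(R)$ must be algebraic over $Z$. The engine of the argument is Corollary \ref{p1}, which asserts that a ring possessing a unit not algebraic over $Z$ is automatically submaximal. So the whole task reduces to manufacturing, out of a hypothetical transcendental element of $J(R)$, a \emph{unit} of $R$ that is still transcendental over $Z$, and then invoking that corollary to derive a contradiction.

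First I would suppose, for contradiction, that $J(R)$ is not algebraic over $Z$, so there exists $a\in J(R)$ which is not algebraic over the prime subring $Z$. The key structural fact I would use is the standard characterization of the Jacobson radical: for $a\in J(R)$, the element $1+a$ is a unit of $R$ (indeed $1-ra\in U(R)$ for every $r\in R$, and the case $r=-1$ gives $1+a\in U(R)$). Thus I obtain a unit $x:=1+a\in U(R)$ at essentially no cost.

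Next I would check that this unit $x=1+a$ remains transcendental over $Z$. This is where one must be slightly careful, but it is routine: translation by an element of $Z$ preserves algebraicity, since $a=x-1$, and if $g\in Z[t]$ were a nonzero polynomial witnessing the algebraicity of $x$, then $h(t):=g(t+1)$ would be a nonzero polynomial in $Z[t]$ with $h(a)=0$, forcing $a$ to be algebraic over $Z$ and contradicting the choice of $a$. Hence $x$ is a unit of $R$ not algebraic over $Z$. Applying Corollary \ref{p1} to $x$ shows that $R$ is submaximal, contradicting our assumption, and the conclusion follows.

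The argument is short and I do not anticipate a serious obstacle; the only points requiring care are the invocation of the unit property of $1+a$ for $a\in J(R)$ and the preservation of transcendence under the shift $a\mapsto a+1$. Both are elementary, so the substance of the result lies entirely in Corollary \ref{p1}, which this corollary simply packages in terms of the Jacobson radical.
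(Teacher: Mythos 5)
Your argument is correct and is precisely the intended one: the paper leaves Corollary \ref{p2} without an explicit proof because it follows immediately from Corollary \ref{p1} via the observation that $1+a$ is a unit for $a\in J(R)$ and that the shift $a\mapsto 1+a$ preserves transcendence over $Z$. Nothing further is needed.
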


We need the following immediate corollary in the next section.

\begin{cor}\label{corx7}
Let $R$ be a ring which is not algebraic over $\mathbb{Z}$. Then
either $R$ is submaximal or for any non-algebraic element $x\in
R$ over $\mathbb{Z}$ and every natural number $n>1$, we have
$\mathbb{Z} \cap (nx-1)R \neq 0$.
\end{cor}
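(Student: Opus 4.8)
My plan is to split on the characteristic of $R$ and, in the main case, to prove the contrapositive: assuming there exist a non-algebraic element $x\in R$ over the prime subring and a natural number $n>1$ with $\mathbb{Z}\cap (nx-1)R=0$, I would produce a maximal subring of $R$. The device is the quotient $\bar R:=R/(nx-1)R$. First observe that $(nx-1)R$ must be a proper ideal, since otherwise $\mathbb{Z}\subseteq (nx-1)R$ gives $\mathbb{Z}\cap (nx-1)R=\mathbb{Z}\neq 0$; hence $\bar R\neq 0$, and in $\bar R$ the image $\bar x$ satisfies $\overline{n}\,\bar x=1$, i.e. $\tfrac1n\in\bar R$. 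The crucial reformulation is that $\mathbb{Z}\cap (nx-1)R$ is exactly the kernel of the composite $\mathbb{Z}\to R\to \bar R$; thus the hypothesis says precisely that the prime subring embeds into $\bar R$, equivalently $Char(\bar R)=Char(R)$.

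If $Char(R)=0$ this settles the main case at once: $\bar R$ then has characteristic $0$ and contains $\tfrac1n$ with $n>1$, so $\bar R$ is submaximal by Corollary \ref{pp3}, and submaximality lifts back to $R$ through the inclusion-preserving correspondence between subrings of $R$ containing $(nx-1)R$ and subrings of $\bar R$ (the same lifting already used tacitly in the proofs of Theorem \ref{pp2} and Corollary \ref{p1}). This contradicts $\mathbb{Z}\cap (nx-1)R=0$, so the desired conclusion holds in characteristic $0$.

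The remaining case $Char(R)=c>0$ is where the real difficulty lies, and there I would instead show that the left disjunct always holds, i.e. that such an $R$ is submaximal outright. The quotient is now useless: since $\bar x=\overline{n}^{-1}$ with $\overline{n}$ already lying in the finite prime ring $\mathbb{Z}_c$, the element $\tfrac1n$ carries no information ($\bar x$ is algebraic over the prime subring, being a root of $\overline{n}\,T-1$), so neither Corollary \ref{pp3} nor the unit-based criteria of Theorem \ref{pp2}, Corollary \ref{p1} and Proposition \ref{pp1} apply to $\bar R$; indeed for $R=\mathbb{F}_p[x]$ with $p\nmid n$ one checks that $\mathbb{Z}\cap (nx-1)R=0$ while $\bar R$ is merely a field. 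Since $R$ is not algebraic over $\mathbb{Z}_c$ it contains a transcendental element $x$, and by Lemma \ref{qcufd}(2) some prime quotient $R/Q$ is an integral domain of prime characteristic containing a copy of the polynomial ring $\mathbb{Z}_p[x]$; by the lifting principle it suffices to prove that such a domain is submaximal. This is the main obstacle: the units of $R/Q$ may all be algebraic over the prime field, so no unit-based tool is available, and one is forced to build a maximal subring directly from the transcendental element — for instance by exhibiting a proper subring $S$ with $S[x]=R/Q$, in the spirit of the characterization of submaximality recalled at the beginning of this section. Carrying out that construction in full generality is the crux of the positive-characteristic case.
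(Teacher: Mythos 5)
Your characteristic-zero argument is exactly the paper's proof: the paper likewise passes to $\bar R=R/(nx-1)R$, observes that $\mathbb{Z}\cap(nx-1)R=0$ forces $Char(\bar R)=0$ while $\frac1n\in\bar R$, and invokes Corollary \ref{pp3}, with submaximality pulled back to $R$ along the subring correspondence. The only cosmetic difference is in the degenerate case $(nx-1)R=R$: the paper settles it by Corollary \ref{p1} (the unit $nx-1$ is then non-algebraic over the prime subring, since $Char(R)=0$), whereas you observe it simply cannot occur under the contrapositive hypothesis; both are fine. The positive-characteristic case that you single out as ``the crux'' is not part of the statement: the hypotheses ``$R$ is not algebraic over $\mathbb{Z}$'', ``$x$ non-algebraic over $\mathbb{Z}$'' and the conclusion ``$\mathbb{Z}\cap(nx-1)R\neq 0$'' all presuppose that $\mathbb{Z}$ sits inside $R$ as its prime subring, i.e. $Char(R)=0$; consistently with this, the paper's own two-line proof covers only that case (Corollary \ref{pp3} explicitly requires zero characteristic), and the corollary is applied only to characteristic-zero domains (Theorem \ref{yy2}). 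So your proof of the corollary as stated is complete, and the unfinished construction you describe for prime characteristic is not needed here.
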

\begin{proof}
If $(nx-1)R=R$, then we are done by Corollary \ref{p1}, and if
not, then by using Corollary \ref{pp3}, we are done.
\end{proof}

\begin{cor}\label{corx8}
Let $D$ be a $PID$ and $R\supseteq D$ be an integral domain. If
$D$ is integrally closed in $R$ and $U(R)\neq U(D)$, then $R$ is
submaximal.
\end{cor}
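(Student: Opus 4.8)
The plan is to build, inside the quotient field of $R$, a maximal subring that is integrally closed and misses a prescribed unit of $R$, and then invoke Proposition \ref{pp1}. Since $U(R)\neq U(D)$, fix $x\in U(R)\setminus U(D)$. Because $x\notin U(D)$, at least one of $x,x^{-1}$ fails to lie in $D$; as both are units of $R$, I may replace $x$ by $x^{-1}$ if necessary and assume $x\in U(R)\setminus D$. Since $D$ is integrally closed in $R$ and $x\in R\setminus D$, the element $x$ is not integral over $D$. Write $K$ and $E$ for the quotient fields of $D$ and $R$. If $x$ is transcendental over $K$, then it is transcendental over the prime subring $Z\subseteq D$, hence not algebraic over $Z$; as $x\in U(R)$, Corollary \ref{p1} immediately yields that $R$ is submaximal, so this case is disposed of at once.

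Assume now that $x$ is algebraic over $K$. Here I exploit that a $PID$ is a Krull domain with $D=\bigcap_{p}D_{(p)}$, the intersection ranging over non-associate irreducibles $p$, each $D_{(p)}$ being a $DVR$ with valuation $v_p$. An algebraic element is integral over $D$ if and only if it is integral over every $D_{(p)}$: the monic minimal polynomial over $K$ has coefficients lying in each integrally closed $D_{(p)}$, hence in their intersection $D$. Since $x$ is not integral over $D$, there is an irreducible $p$ for which $x$ is not integral over the valuation ring $D_{(p)}$. By the valuation-theoretic integrality criterion, $x$ is integral over $D_{(p)}$ exactly when $w(x)\ge 0$ for every extension $w$ of $v_p$ to $E$; therefore some extension $w$ of $v_p$ to $E$ (one exists by Chevalley's extension theorem) satisfies $w(x)<0$.

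Let $\mathcal{O}=\{e\in E\ :\ w(e)\ge 0\}$ be the valuation ring of $w$. As $v_p$ has rank one and $E/K$ is algebraic, the value group of $w$ embeds in $\mathbb{Q}$, so $w$ has rank one and $\mathcal{O}$ is a maximal subring of $E$: if $S$ is a subring with $\mathcal{O}\subsetneq S\subseteq E$, pick $a\in S$ with $w(a)<0$; then for any $c\in E$ one has $w(c\,a^{-n})\ge 0$ for $n\gg 0$, so $c\,a^{-n}\in\mathcal{O}\subseteq S$ and $c=(c\,a^{-n})a^{n}\in S$, forcing $S=E$. Being a valuation ring, $\mathcal{O}$ is integrally closed in $E$, and $w(x)<0$ gives $x\notin\mathcal{O}$, whence $U(R)\nsubseteq\mathcal{O}$. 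Applying Proposition \ref{pp1} with $T=E$ and $V=\mathcal{O}$ shows that $R$ is submaximal, completing the algebraic case.

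The crux of the argument is the algebraic case: one must single out an irreducible $p$ together with an extension $w$ of $v_p$ to $E$ for which $w(x)<0$, and then verify that the associated valuation ring really is a maximal subring, which is precisely where rank one is essential. The hypothesis that $D$ is integrally closed in $R$ enters only once, to pass from $x\notin D$ to $x$ not integral over $D$, but it is indispensable, since it is exactly what guarantees a negative value $w(x)<0$ and hence a valuation ring avoiding $x$; without it, $x$ could be integral and every extension of $v_p$ would keep $x$ inside its valuation ring.
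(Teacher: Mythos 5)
Your proof is correct, but it takes a genuinely different route from the paper's. The paper exploits the $PID$ structure arithmetically: from $x$ algebraic over $D$ and $D$ integrally closed in $R$ it writes $x=\frac{a}{b}$ with $a,b\in D$, reduces the fraction to lowest terms to obtain $z=\frac{r}{s}\in U(R)\setminus D$ with $(r,s)=1$, uses the Bezout identity to conclude $\frac{1}{s}\in R$ for a non-unit $s$, and then invokes Theorem \ref{pp2} (an inverted irreducible of a $UFD$ sitting inside $R$ forces submaximality). You never produce an explicit inverted irreducible; instead you use the decomposition $D=\bigcap_p D_{(p)}$ and the characterization of integral closure by valuation overrings to locate a rank-one valuation $w$ on $E=\mathrm{Frac}(R)$ with $w(x)<0$, check directly that its valuation ring is an integrally closed maximal subring of $E$, and feed that into Proposition \ref{pp1}. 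Both arguments use integral closedness in the same way (to turn $x\notin D$ into non-integrality) and both terminate in Proposition \ref{pp1}, but the paper reaches it through the already-established Theorem \ref{pp2} and the lifting of maximal subrings of fields, whereas yours is self-contained at the cost of heavier machinery (Chevalley's extension theorem, value groups of algebraic extensions). What your approach buys is generality: since all you use about $D$ is that it is the intersection of its localizations at height-one primes, each a $DVR$, the same argument proves the statement with ``$PID$'' weakened to ``Dedekind (indeed Krull) domain,'' where the paper's lowest-terms manipulation is unavailable. One cosmetic remark: when $D$ is a field the family of irreducibles is empty, but the algebraic case is then vacuous (an element of $R\setminus D$ algebraic over $K=D$ would contradict integral closedness), so your argument is not harmed.
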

\begin{proof}
Let $x\in U(R)\setminus U(D)$. If $x$ is not algebraic over $D$,
then we are done, by Corollary \ref{p1}. Hence assume that $x$ is
algebraic over $D$, thus there exists $b\in D$ such that $bx$ is
integral over $D$ and since $D$ is integrally closed in $R$, we
must have $bx=a\in D$. Therefore $x=\frac{a}{b}$. Now, since
$x\notin U(D)$, we infer that either $x\notin D$ or $x^{-1}\notin
D$. Therefore, in any case, there must exist $r,s\in D$ such that
$(r,s)=1$ and $z=\frac{r}{s}\in U(R)\setminus D$. Now since $D$ is
a $PID$, we infer that $\frac{1}{s}\in R$. Thus we are done, by
Theorem \ref{pp2}.
\end{proof}

\begin{lem}\label{propx3}
Let $R$ be a ring with nonzero characteristic $n$ which is square
free (in particular, if $R$ is reduced ring with nonzero
characteristic). Then either $R$ is submaximal or $U(R)$ is a
torsion group.
\end{lem}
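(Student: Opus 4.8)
The plan is to prove the contrapositive: assuming $U(R)$ is not a torsion group, I will produce a maximal subring of $R$. So fix a unit $x\in U(R)$ of infinite multiplicative order; the goal is to show $R$ is submaximal. The whole argument hinges on the hypothesis that $n=Char(R)$ is square-free, which I will use precisely to split $R$ into a finite product of rings of prime characteristic and then reduce matters to Corollary \ref{p1}.

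First I would exploit square-freeness to decompose $R$. Writing $n=p_1\cdots p_k$ with distinct primes, the prime subring satisfies $Z\cong\mathbb{Z}_n\cong \prod_{i=1}^k\mathbb{Z}_{p_i}$, so $Z$ carries orthogonal idempotents $e_1,\dots,e_k$ with $\sum_i e_i=1$ and $e_ie_j=0$ for $i\ne j$. Since these idempotents already lie in $Z\subseteq R$, they induce a ring isomorphism $R\cong\prod_{i=1}^k R_i$, where $R_i=e_iR$ has identity $e_i$, prime subring $\mathbb{Z}\cdot e_i\cong\mathbb{Z}_{p_i}$, and characteristic exactly $p_i$ (each $e_i\ne 0$ because $Char(R)=n$). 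Under this identification $x$ corresponds to a tuple $(x_1,\dots,x_k)$ with each $x_i\in U(R_i)$, and since the order of $x$ equals the least common multiple of the (possibly infinite) orders of the $x_i$, the fact that $x$ has infinite order forces at least one coordinate, say $x_{i_0}$, to have infinite order.

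Next I would show that $x_{i_0}$ is \emph{not} algebraic over the prime subring $\mathbb{F}_{p_{i_0}}\cong\mathbb{Z}_{p_{i_0}}$ of $R_{i_0}$. If it were, then, the leading coefficient being invertible in the field, $x_{i_0}$ would satisfy a monic polynomial over $\mathbb{F}_{p_{i_0}}$, forcing $\mathbb{F}_{p_{i_0}}[x_{i_0}]$ to be a finite ring. As $x_{i_0}$ is a unit of $R_{i_0}$, it is a non-zero-divisor of the subring $\mathbb{F}_{p_{i_0}}[x_{i_0}]$, hence a unit of this finite commutative ring, and therefore an element of finite order in the finite group $U(\mathbb{F}_{p_{i_0}}[x_{i_0}])$; but that order coincides with its order in $U(R_{i_0})$, contradicting the choice of $x_{i_0}$. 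Thus $x_{i_0}\in U(R_{i_0})$ is a unit that is not algebraic over the prime subring of $R_{i_0}$, and Corollary \ref{p1} yields that $R_{i_0}$ is submaximal.

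Finally I would promote submaximality of one factor to the whole product. It suffices to check that if $S$ is a maximal subring of $R_{i_0}$, then $S\times\prod_{j\ne i_0}R_j$ is a maximal subring of $R\cong\prod_i R_i$: for any subring $T$ with $S\times\prod_{j\ne i_0}R_j\subseteq T\subseteq R$, multiplication by the idempotent $e_{i_0}$ shows that $T=\pi_{i_0}(T)\times\prod_{j\ne i_0}R_j$ for a subring with $S\subseteq\pi_{i_0}(T)\subseteq R_{i_0}$, so maximality of $S$ leaves only the two options $T=S\times\prod_{j\ne i_0}R_j$ and $T=R$. Hence $R$ is submaximal, completing the contrapositive. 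The step I expect to require the most care is the decomposition stage: it is exactly here that square-freeness is indispensable (when $n$ is divisible by a square, $\mathbb{Z}_n$ is not a product of fields, the factor rings need not split off, and the reduction to prime characteristic collapses), and one must ensure that the chosen coordinate of $x$ genuinely retains infinite order rather than merely being nontrivial.
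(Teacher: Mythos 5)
Your proof is correct and follows essentially the same route as the paper: the paper reduces ``without loss of generality'' to prime characteristic (exactly the CRT splitting you carry out explicitly, together with the fact that a maximal subring of one factor yields one of the product), and then applies Corollary \ref{p1} plus the finiteness of $\mathbb{Z}_p[x]$ for $x$ algebraic over $\mathbb{Z}_p$. You merely state it contrapositively and fill in the details the paper leaves implicit.
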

\begin{proof}
Without lose of generality we may assume that $Char(R)=p$, where
$p$ is a prime number. Now suppose that $R$ is not submaximal,
then $U(R)$ must be algebraic over $\mathbb{Z}_p$, by Corollary
\ref{p1}. Assume that $x\in U(R)$, thus we infer that
$\mathbb{Z}_p[x]\cong\frac{\mathbb{Z}_p[t]}{I}$ where $I$ is a
nonzero ideal of the polynomial ring $\mathbb{Z}_p[t]$. Hence we
infer that $\mathbb{Z}_p[x]$ is a finite ring, and therefore $x$
is a torsion element. Thus $U(R)$ is a torsion group.
\end{proof}

We recall the reader that zero dimensional rings (in particular
von Neumann regular rings) with zero characteristic are
submaximal, see \cite[Corollary 3.11]{azkarm3}. We also have the
following.

\begin{prop}\label{regxx1}
Let $R$ be a von Neumann regular ring. Then either $R$ is
submaximal or $R$ is a periodic ring.
\end{prop}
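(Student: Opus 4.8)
The plan is to split the argument according to the characteristic of $R$ and to exploit the standard structure theory of commutative von Neumann regular rings, namely that such a ring is precisely a zero-dimensional reduced ring. First I would dispose of the case $Char(R)=0$: since $R$ is von Neumann regular it is zero-dimensional, and a zero-dimensional ring of zero characteristic is submaximal by the result recalled just before the statement, see \cite[Corollary 3.11]{azkarm3}. So in this case the first alternative holds and there is nothing more to prove.

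The remaining case is $Char(R)=n>0$. Here I would observe that, $R$ being reduced, the characteristic $n$ must be square free; hence Lemma \ref{propx3} applies and yields the dichotomy: either $R$ is submaximal, and we are done, or $U(R)$ is a torsion group. It therefore suffices to show that a von Neumann regular ring whose unit group is torsion is periodic.

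For this final step I would use the local structure of von Neumann regular rings. Given $a\in R$, choose $x\in R$ with $a=axa$ and set $e=ax$; then $e^2=e$ and $ea=a$, so that $a$ lies in the corner ring $eR$ and is in fact a unit there with inverse $ex$ (indeed $a(ex)=e\cdot ax=e$). The key point I would then establish is that the torsion hypothesis on $U(R)$ descends to $U(eR)$: writing $R\cong eR\times(1-e)R$, a unit $u$ of $eR$ yields the unit $u+(1-e)$ of $R$, and using the orthogonality of $e$ and $1-e$ one computes $(u+(1-e))^k=u^k+(1-e)$, so that the finiteness of its order forces $u^k=e$ for some $k\geq 1$. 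Applying this to $a$ gives $a^k=e$, and multiplying by $a$ (using $ea=a$) yields $a^{k+1}=a$. Since $a$ was arbitrary, every element of $R$ satisfies a relation $a^{m}=a$ with $m>1$, i.e. $R$ is periodic.

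The main obstacle I anticipate is precisely this transfer of the torsion condition from $U(R)$ to the unit groups of the corner rings $eR$; everything else is either a direct citation (the zero-characteristic and square-free cases) or a routine idempotent computation. Once the relation $a^{k+1}=a$ is secured for each $a$, the conclusion that $R$ is periodic is immediate.
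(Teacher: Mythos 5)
Your proof is correct, and its skeleton coincides with the paper's: both dispose of characteristic zero via the zero-dimensionality of $R$ together with \cite[Corollary 3.11]{azkarm3}, and both then invoke Lemma \ref{propx3} (applicable since $R$ is reduced) to conclude that if $R$ is not submaximal then $U(R)$ is a torsion group. Where you diverge is in the passage from ``$U(R)$ torsion'' to periodicity. The paper simply quotes the well-known fact that von Neumann regular rings are unit regular --- for each $x$ there is $u\in U(R)$ with $x=x^2u$ --- whence $u^n=1$ gives $x^n=x^{2n}$ in one line. You instead carry out the corner-ring computation: from $a=axa$ you form $e=ax$, observe that $a$ is a unit of $eR$ with inverse $ex$, and transfer the torsion hypothesis from $U(R)$ to $U(eR)$ via $u\mapsto u+(1-e)$. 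All the identities you need ($e^2=e$, $ea=a$, $a(ex)=e$, $(u+(1-e))^k=u^k+(1-e)$, and $u^k=e$ from multiplying by $e$) check out, and in effect your argument is a self-contained proof of unit regularity in the commutative case: the element $v=ex+(1-e)$ is a unit of $R$ satisfying $a=ava$. So your route costs a few extra lines of idempotent bookkeeping but removes the external appeal to unit regularity; the paper's route is shorter at the price of quoting that fact as known. Either way the conclusion $a^{k+1}=a$ (respectively $x^n=x^{2n}$) gives periodicity.
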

\begin{proof}
If $R$ is not submaximal then by the above comment $R$ has
nonzero characteristic. Hence by the above lemma $U(R)$ is
torsion. But it is well-known that von Neumann regular rings are
unit regular, that is to say, for any $x\in R$, there exists $u\in
U(R)$ such that $x=x^2u$. Hence by the above lemma, if $u^n=1$,
then we have $x^n=x^{2n}$ and thus we are done.
\end{proof}

In fact the above result holds for any zero-dimensional ring $R$.
For proof note that if $R$ is not submaximal then $R$ has nonzero
characteristic, say $n$, and $R$ is integral over $\mathbb{Z}_n$,
by \cite[Corollary 3.14]{azkarm3}. Now note that for any $x\in
R$, the ring $\mathbb{Z}_n[x]$ is finite and hence we are done.
The next remark shows in some rings $R$, the group $U(R)$ may not
be torsion.

\begin{rem}\label{ntor}
Let $R$ be a ring.  If $R$ is von Neumann regular with zero
characteristic then clearly $U(R)$ is not torsion, by the proof of
the above proposition, since $R$ is not periodic. Also, if $R$ is
a ring with $J(R)\neq 0$, then $U(R)$ is not torsion. To see this
note that if $U(R)$ is torsion, then for any $0\neq x\in J(R)$,
there exists a natural number $n$ such that $(1+x)^n=1$. Hence we
infer that $x=0$, which is absurd.
\end{rem}

By Corollary \ref{p2}, if $R$ is a ring then either $R$ is
submaximal or every element of $J(R)$ is algebraic over $Z$. Now
we also have the following result.

\begin{prop}\label{jczd}
Let $R$ be a ring with zero characteristic and $J(R)\neq 0$. Then
either $R$ is submaximal or for any $x\in J(R)$ and $f(t)\in
\mathbb{Z}[t]$, if $f(x)=0$, then $f(0)=0$. In particular $J(R)$
consists of zero divisors.
\end{prop}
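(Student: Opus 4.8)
The plan is to prove the contrapositive of the dichotomy: assuming $R$ is not submaximal, I will show that every $x\in J(R)$ and every $f(t)\in\mathbb{Z}[t]$ with $f(x)=0$ must satisfy $f(0)=0$. So fix such an $x$ and $f$, write $f(t)=a_0+a_1t+\cdots+a_mt^m$ with $a_0=f(0)$ (we may assume $m\geq 1$, since for a nonzero constant $f$ the relation $f(x)=0$ already forces $a_0=0$ by zero characteristic), and suppose toward a contradiction that $a_0\neq 0$. The relation $f(x)=0$ rearranges to $a_0=-x\,(a_1+a_2x+\cdots+a_mx^{m-1})$, and since $x\in J(R)$ and $J(R)$ is an ideal, the right-hand side lies in $J(R)$. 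Thus the nonzero integer $a_0$ belongs to $J(R)$.

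The next step extracts information about the residue fields. Because $a_0\in J(R)\subseteq M$ for every $M\in Max(R)$, the image of $a_0$ in the field $R/M$ is zero; as $a_0\neq 0$, this forces $Char(R/M)$ to be a prime number dividing $a_0$. Hence the set $\{Char(R/M)\mid M\in Max(R)\}$ is contained in the finite set of prime divisors of $a_0$, and in particular is finite.

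This is the crux and where the contradiction appears: by Corollary \ref{plemg1}, a ring of zero characteristic that is not submaximal satisfies $\{Char(R/M)\mid M\in Max(R)\}=\mathbb{P}$, an infinite set. This contradicts the finiteness just obtained, so $a_0=0$, i.e. $f(0)=0$, proving the dichotomy. The main point of the argument, and the step I expect to carry the weight, is the observation that a nonzero polynomial relation with nonzero constant term pushes that constant term (a nonzero integer) into $J(R)$, which is far too restrictive in the non-submaximal case.

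Finally, for the assertion that $J(R)$ consists of zero divisors (read in the non-submaximal branch, which is the only case where it can hold), I would take $0\neq x\in J(R)$, which by Corollary \ref{p2} is algebraic over $Z$, and choose a nonzero $g(t)\in\mathbb{Z}[t]$ of least degree with $g(x)=0$. The part just proved gives $g(0)=0$, so $g(t)=t\,h(t)$ with $h\in\mathbb{Z}[t]$ nonzero of smaller degree; then $x\,h(x)=0$ while $h(x)\neq 0$ by minimality of $\deg g$, exhibiting $x$ as a zero divisor.
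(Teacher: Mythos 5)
Your proof is correct and follows essentially the same route as the paper: both hinge on the observation that $f(x)=0$ forces the integer $f(0)$ into $J(R)$, after which a nonzero integer in the Jacobson radical contradicts non-submaximality. The only difference is cosmetic --- the paper concludes directly via Corollary \ref{pp3} (the integer units $1\pm f(0)$ must equal $\pm 1$), whereas you route the contradiction through Corollary \ref{plemg1} on residue characteristics, itself a consequence of Corollary \ref{pp3}; your minimal-polynomial argument for the zero-divisor claim is the intended (unwritten) one.
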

\begin{proof}
Assume that $R$ is not submaximal and $x\in J(R)$, $f(t)\in
\mathbb{Z}[t]$, and $f(x)=0$. Now since $x\in J(R)$, we infer that
if $u$ is one of the elements $1+f(0)$ or $1-f(0)$, then $u\in
U(R)\cap\mathbb{Z}$. Thus by Corollary \ref{pp3}, we have $u=1$
or $u=-1$. This implies that either $f(0)=0$, and therefore we are
done, or $f(0)\in\{2,-2\}$. But in the latter case, we have $2\in
J(R)$ and therefore $1-2n\in U(R)$, for each $n\in\mathbb{Z}$,
which is impossible by Corollary \ref{pp3}.
\end{proof}

The following is a generalization of \cite[Corollary
2.24]{azkarm4}.

\begin{cor}\label{p4}
Let $R$ be an integral domain with $J(R)\neq 0$. Then any
$R$-algebra $T$ is submaximal. In particular, any algebra over a
non-field $G$-domain is submaximal.
\end{cor}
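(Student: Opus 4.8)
The plan is to locate, inside $R$, either a unit that is not algebraic over the prime subring $Z$ or a witness to $Char(R/J(R))\neq Char(R)=0$, so that the already-established Corollary \ref{p1} and Corollary \ref{corx4} (both of which yield submaximality of \emph{every} $R$-algebra, not merely of $R$) can be invoked. First I would fix a nonzero element $x\in J(R)$; since $x\in J(R)$ we have $1+x\in U(R)$, and because $\mathbb{Z}[x]=\mathbb{Z}[1+x]$ the unit $1+x$ is algebraic over $Z$ exactly when $x$ is. If $x$ is transcendental over $Z$, then $1+x$ is a unit of $R$ not algebraic over $Z$, and Corollary \ref{p1} immediately gives that every $R$-algebra is submaximal.

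It then remains to treat the case in which $x$ is algebraic over $Z$, which I would split according to $Char(R)$. If $Char(R)=p>0$, then $Z=\mathbb{Z}_p$ and $Z[x]$ is a finitely generated algebraic $\mathbb{Z}_p$-algebra, hence finite; being a subring of the domain $R$ it is a finite integral domain, i.e.\ a field, so the nonzero element $x$ would be a unit of $R$. This is impossible, as a unit cannot lie in $J(R)$, so this subcase does not occur and we are back to the transcendental situation. If $Char(R)=0$, then $Z=\mathbb{Z}$; choosing a nonzero $f\in\mathbb{Z}[t]$ of least degree with $f(x)=0$ and cancelling factors of $t$ (legitimate since $R$ is a domain and $x\neq 0$), I may assume $f(0)=a_0\neq 0$, whence $a_0=-x\,h(x)\in xR\subseteq J(R)$ for the appropriate $h\in\mathbb{Z}[t]$. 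Thus $0\neq a_0\in\mathbb{Z}\cap J(R)$, so $Char(R/J(R))\neq 0=Char(R)$, and Corollary \ref{corx4} shows every $R$-algebra is submaximal.

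For the final assertion I would invoke the standard fact that a $G$-domain $D$ satisfies $\mathrm{Frac}(D)=D[1/a]$ for some $a\neq 0$ (see \cite{kap}), so that $a$ lies in every nonzero prime ideal of $D$; when $D$ is not a field its maximal ideals are nonzero, giving $0\neq a\in J(D)$, and the main statement applies with $R=D$. The one genuinely delicate point is the algebraic, characteristic-zero subcase: the extraction of a nonzero rational integer inside $J(R)$ must be carried out via a minimal annihilating polynomial, and it is essential to route the conclusion through Corollary \ref{p1} and Corollary \ref{corx4} rather than through Proposition \ref{jczd}, since the latter would only yield submaximality of $R$ itself and not of an arbitrary $R$-algebra.
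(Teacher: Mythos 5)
Your proof is correct and follows essentially the same route as the paper's: both reduce to finding either a unit $1+x$ with $0\neq x\in J(R)$ transcendental over $Z$ (handled by Corollary \ref{p1}) or a nonzero integer in $J(R)$ forcing $0=Char(R)\neq Char(\frac{R}{J(R)})$ (handled by Corollary \ref{corx4}), using the same constant-term argument on a minimal annihilating polynomial. The only difference is organizational --- you split cases by whether a fixed element of $J(R)$ is algebraic, while the paper splits by characteristic first --- so there is nothing further to compare.
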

\begin{proof}
If $R$ has nonzero characteristic or
$Char(R)=Char(\frac{R}{J(R)})=0$, then one can easily see that
$J(R)$ is not algebraic over $Z$ (note, if $0\neq x\in J(R)$ and
$a_nx^n+\cdots+a_1x+a_0=0$, where $n\in \mathbb{N}$, $a_i\in Z$
and $a_0\neq 0$, then we infer that $a_0\in J(R)$ which is
absurd). Therefore $U(R)$ is not algebraic over $Z$. Thus $U(T)$
is not algebraic over $Z$ and therefore $T$ is submaximal, by
Corollary \ref{p1}. Hence we may assume that $0=Char(R)\neq
Char(\frac{R}{J(R)})$ and hence $T$ is submaximal by Corollary
\ref{corx4}. The last part is now evident.
\end{proof}

\begin{rem}
One can prove the above corollary by using the proof of
Proposition \ref{jczd}, Lemma \ref{propx3} and Remark \ref{ntor}.
\end{rem}

\begin{prop}
Let $R\subseteq T$ be an extension of commutative rings with
lying-over property and $R$ is not Hilbert. Then $T$ is
submaximal.
\end{prop}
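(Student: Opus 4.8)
The plan is to translate the failure of the Hilbert property into the hypothesis of Corollary \ref{p4}, transport that hypothesis into a quotient of $T$ via the lying-over property, and then lift submaximality from the quotient back up to $T$ itself. First, since $R$ is not Hilbert, by definition there exists a $G$-ideal $P$ of $R$ that is not maximal. Hence $R/P$ is a $G$-domain, and since $P$ is not maximal, $R/P$ is not a field. In other words, $R/P$ is a non-field $G$-domain, which is exactly the kind of base ring appearing in the final assertion of Corollary \ref{p4}.

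Next I would invoke the lying-over hypothesis to choose a prime ideal $Q$ of $T$ with $Q\cap R=P$. The induced homomorphism $R/P\to T/Q$ is then injective (its kernel is $(Q\cap R)/P=0$), so $T/Q$ is an algebra over the non-field $G$-domain $R/P$. By the last part of Corollary \ref{p4}, it follows immediately that $T/Q$ is submaximal.

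It then remains to pass submaximality from the quotient $T/Q$ up to $T$. Let $\pi\colon T\to T/Q$ be the canonical projection and let $\overline{V}$ be a maximal subring of $T/Q$. Then $V=\pi^{-1}(\overline{V})$ is a proper subring of $T$ containing the ideal $Q$, and the inclusion-preserving bijection between the subrings of $T$ containing $Q$ and the subrings of $T/Q$ shows that $V$ is a maximal subring of $T$. Consequently $T$ is submaximal, as required.

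The conceptual bulk of the argument lies in the first two steps: recognizing that the failure of the Hilbert property supplies precisely a non-field $G$-domain as a residue ring of $R$, and that lying-over is exactly what is needed to realize that residue ring inside a residue ring of $T$ so that Corollary \ref{p4} applies. I expect no genuine obstacle in the final lifting step, which is the routine subring-correspondence argument; the only point that must be verified carefully is that the chosen prime $Q$ of $T$ satisfies $Q\cap R=P$, and this is guaranteed directly by the lying-over hypothesis.
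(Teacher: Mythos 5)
Your proof is correct and follows essentially the same route as the paper: pick a prime $P$ witnessing the failure of the Hilbert property, use lying-over to embed $R/P$ into $T/Q$, and apply Corollary \ref{p4} to the non-field $G$-domain $R/P$ (equivalently, $J(R/P)\neq 0$). The only cosmetic difference is that you spell out the lifting of submaximality from $T/Q$ to $T$ via the subring correspondence, which the paper leaves implicit.
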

\begin{proof}
Let $P$ be a prime ideal in $R$ such that $P$ is not an
intersection of a family of maximal ideals in $R$. Now assume $Q$
is a prime ideal in $T$ lying over $P$. Thus $R/P\subseteq T/Q$
and since $J(R/P)\neq 0$, we infer that $T/Q$ is submaximal by
Corollary \ref{p4}.
\end{proof}

We recall the reader that if $R$ is a ring with
$|Max(R)|>2^{\aleph_0}$, then $R$ is submaximal, see
\cite[Proposition 2.6 ]{azkrm}.

\begin{cor}\label{p6}
Let $R$ be a ring. Then either $R$ is submaximal or it is a
Hilbert ring with $|Spec(R)|\leq 2^{2^{\aleph_0}}$.
\end{cor}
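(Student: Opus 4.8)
The plan is to obtain the stated dichotomy by feeding the immediately preceding proposition back into itself with $T=R$. The extension $R\subseteq R$ trivially enjoys the lying-over property (every prime of $R$ lies over itself), so that proposition yields: if $R$ is not Hilbert, then $R$ is submaximal. Contrapositively, a non-submaximal ring is automatically Hilbert, which disposes of the first clause with no further work. The only remaining content is the cardinality bound $|Spec(R)|\leq 2^{2^{\aleph_0}}$ under the hypothesis that $R$ is not submaximal (hence Hilbert).

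For the bound I would first control $|Max(R)|$. Since $R$ is assumed non-submaximal, the result recalled just above the corollary, \cite[Proposition 2.6]{azkrm}, forces $|Max(R)|\leq 2^{\aleph_0}$, because any ring with $|Max(R)|>2^{\aleph_0}$ is submaximal. Next I would transfer this control from maximal ideals to all primes using the defining feature of a Hilbert (Jacobson) ring: every prime ideal $P$ is the intersection of the maximal ideals containing it. This lets me define $\varphi:Spec(R)\to\mathcal{P}(Max(R))$ by $\varphi(P)=\{M\in Max(R)\ |\ P\subseteq M\}$. Since $P=\bigcap_{M\in\varphi(P)}M$ is recovered from the set $\varphi(P)$, the map $\varphi$ is injective, whence $|Spec(R)|\leq 2^{|Max(R)|}\leq 2^{2^{\aleph_0}}$, as required.

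The one point deserving care is the equivalence being invoked, namely that in a Hilbert ring every prime is an intersection of maximal ideals; this is the standard Jacobson-ring characterization, so I would cite \cite{kap} rather than reprove it. Beyond this there is no real obstacle: the proof is simply a matter of assembling the three ingredients in the correct order, namely the self-application of the preceding proposition to get the Hilbert alternative, the $|Max(R)|$ dichotomy, and the injection $Spec(R)\hookrightarrow\mathcal{P}(Max(R))$.
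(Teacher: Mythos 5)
Your proof is correct and follows essentially the same route as the paper: the paper derives the Hilbert alternative by applying Corollary \ref{p4} to $R/P$ for each prime $P$, which is exactly the content of the lying-over proposition you invoke when specialized to $T=R$, and the cardinality bound is obtained in both cases from $|Max(R)|\leq 2^{\aleph_0}$ together with the injection $P\mapsto\{M\in Max(R)\ |\ P\subseteq M\}$ (which the paper leaves implicit but you rightly spell out).
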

\begin{proof}
If $R$ is not submaximal, then for any prime ideal $P$ of $R$,
the integral domain $R/P$ is not submaximal too. Hence we infer
that $J(R/P)=0$, by Corollary \ref{p4}, i.e., $R$ is Hilbert and
therefore $P$ is an intersection of a set of maximal ideals of
$R$. Thus by the above comment we infer that $|Spec(R)|\leq
2^{2^{\aleph_0}}$.
\end{proof}

\begin{rem}
In fact if $R$ is not submaximal, then for any prime ideal $P$
and subring $S$ of $R$, the prime ideal $P\cap S$ is an
intersection of a family of maximal ideals of $S$. To see this
note that $R/P$ contains a copy of $S/(P\cap S)$, and since $R$
is not submaximal, we infer that $J(S/(P\cap S))=0$, by Corollary
\ref{p4}. Hence we are done.
\end{rem}

\begin{lem}\label{jaclem}
Let $R$ be a ring. Then at least one of the following conditions
holds,
\begin{enumerate}
\item There exists a maximal ideal $M$ of $R$, such that $R/M$ is
not an algebraic extension of a finite field (i.e., $R/M$ is not
absolutely algebraic field). In particular, $R/M$ and therefore
$R$ are submaximal.
\item For any subring $S$ of $R$, we have $J(S)\subseteq J(R)$.
\end{enumerate}
\end{lem}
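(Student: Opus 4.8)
The plan is to prove the dichotomy directly, by showing that if condition (1) fails then condition (2) must hold. The failure of (1) means exactly that for every maximal ideal $M$ of $R$ the field $R/M$ is absolutely algebraic, i.e.\ algebraic over a finite prime field $\mathbb{F}_p$ with $p=Char(R/M)$ (where $p$ may depend on $M$). Before attacking (2), I would first settle the ``In particular'' assertion attached to (1).

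For that assertion, suppose some maximal ideal $M$ has $R/M$ not absolutely algebraic. Since $R/M$ is a field there are two cases. If $Char(R/M)=0$, then $\mathbb{Q}\subseteq R/M$, so $\frac{1}{2}\in R/M$ and $R/M$ is submaximal by Corollary \ref{pp3}. If $Char(R/M)=p>0$ but $R/M$ is not algebraic over its prime subfield $\mathbb{F}_p$, then $R/M$ contains an element $x$ transcendental over $\mathbb{F}_p$; as $R/M$ is a field, $x$ is a unit which is not algebraic over the prime subring, so $R/M$ is submaximal by Corollary \ref{p1}. In either case $R/M$ is submaximal, and I would then lift this to $R$ via the standard order-preserving bijection between the subrings of $R$ containing $M$ and the subrings of $R/M$: the preimage of a maximal subring of $R/M$ is a proper subring of $R$, and any subring lying between it and $R$ automatically contains $M$, so the preimage is maximal in $R$.

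For the main implication, assume now that $R/M$ is absolutely algebraic for every $M\in Max(R)$, fix an arbitrary subring $S$ of $R$ and an element $a\in J(S)$, and aim to show $a\in M$ for every $M\in Max(R)$. The crucial step is the observation that the image $\bar S=(S+M)/M\cong S/(S\cap M)$ is an integral domain algebraic over $\mathbb{F}_p$ (being a subring of $R/M$), and is therefore itself a field: for any nonzero $d\in\bar S$ the ring $\mathbb{F}_p[d]$ is a finite integral domain, hence a field, so $d^{-1}\in\bar S$. Consequently $S\cap M$ is a maximal ideal of $S$, whence $a\in J(S)\subseteq S\cap M\subseteq M$. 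Since $M$ was arbitrary, $a\in J(R)$, giving $J(S)\subseteq J(R)$ and establishing (2).

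The main obstacle --- really the only nonroutine point --- is the finiteness argument showing that an integral domain algebraic over a finite field is a field; everything else is bookkeeping. I would take care to identify the prime subfield of $\bar S$ correctly (it equals $\mathbb{F}_p$, since $\bar S\subseteq R/M$ forces $Char(\bar S)=p$), so that each $d$ has a minimal polynomial over $\mathbb{F}_p$ and $\mathbb{F}_p[d]$ is genuinely finite-dimensional. Once that is in hand, the containment $J(S)\subseteq S\cap M$ is immediate from the maximality of $S\cap M$ in $S$, and the conclusion follows.
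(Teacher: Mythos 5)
Your proof is correct and follows essentially the same route as the paper: both arguments show that when every residue field $R/M$ is absolutely algebraic, each $(S+M)/M$ is a field, so $S\cap M$ is maximal in $S$ and $J(S)\subseteq M$ for all $M$. The only cosmetic difference is that the paper handles the ``in particular'' clause of (1) by citing an external theorem, whereas you rederive it from Corollaries \ref{pp3} and \ref{p1}, which is equally valid.
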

\begin{proof}
If $(1)$ does not hold, then for any maximal ideal $M$ of $R$, the
field $R/M$ is algebraic over a finite field. Hence we infer that
every subring of $R/M$ is a field. Now note that if $S$ is a
subring of $R$, then $(S+M)/M$ is a subring of $R/M$ and therefore
$(S+M)/M$ is a field. Thus $S\cap M$ is a maximal ideal of $S$,
for any maximal ideal $M$ of $R$. This shows that $J(S)\subseteq
J(R)$. For the final part in $(1)$, note that by \cite[Theorem 1.8
]{azkrm}, if $R/M$ is not algebraic over a finite field, then
$R/M$ and therefore $R$ are submaximal.
\end{proof}

In \cite[Proposition 2.9]{azkrm} it is proved that if $R$ is a
ring with $|R/J(R)|>2^{2^{\aleph_0}}$, then $R$ is submaximal.

\begin{cor}\label{p7}
Let $R$ be a reduced ring. If either $J(R)\neq 0$ or
$|R|>2^{2^{\aleph_0}}$, then $R$ is submaximal. Moreover, every
$R$-algebra $T$, is submaximal too.
\end{cor}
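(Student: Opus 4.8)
The plan is to reduce both hypotheses to intrinsic conditions on $R$ that survive passage to any extension ring, so that one of the two ``universal'' criteria --- Corollary \ref{p1} (a unit not algebraic over $Z$) or Corollary \ref{corx4} ($0 = Char(R) \neq Char(R/J(R))$) --- applies verbatim to every $R$-algebra $T \supseteq R$. The structural facts I would exploit are that $R$ reduced means $\bigcap_{P \in Min(R)} P = 0$, so $R$ embeds in $\prod_{P} R/P$ and each $R/P$ is an integral domain to which Corollary \ref{p4} is available; and that for an extension $T \supseteq R$ the prime subring $Z$ is common, $U(R) \subseteq U(T)$, and transcendence over $Z$ persists. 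Thus producing a single transcendental unit in $R$, or verifying the characteristic mismatch, immediately yields submaximality of $R$ and of every $R$-algebra at once.

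I would first treat $J(R) \neq 0$, splitting on the characteristic. If $0 = Char(R) \neq Char(R/J(R))$, Corollary \ref{corx4} finishes directly. If $Char(R) = n \neq 0$, reducedness forces $n$ square-free, so Lemma \ref{propx3} gives that $R$ is submaximal or $U(R)$ is torsion; but Remark \ref{ntor} shows $J(R) \neq 0$ precludes $U(R)$ from being torsion, so $R$ carries a unit $u$ of infinite order, and in characteristic $p$ such a $u$ cannot be algebraic over the finite prime ring $Z$ (else $Z[u]$ would be finite and $u$ torsion). Hence $u$ is a transcendental unit and Corollary \ref{p1} applies to $R$ and to every extension. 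The remaining subcase $Char(R) = Char(R/J(R)) = 0$ is the delicate one: I would pick $0 \neq a \in J(R)$ and a minimal prime $P$ with $a \notin P$, pass to the domain $R/P$ whose Jacobson radical contains the nonzero image of $a$, and invoke Corollary \ref{p4} there; the work is to transfer the conclusion back to $R$ and its algebras, either by locating a transcendental unit of $R$ directly, or, when idempotents are present, by splitting $T$ along an idempotent of $R$ so that one factor becomes a genuine algebra over a non-field $G$-domain, to which Corollary \ref{p4} applies.

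For the cardinality hypothesis I would first dispose of $J(R) = 0$: then $R/J(R) = R$ has $|R/J(R)| > 2^{2^{\aleph_0}}$, so $R$ is submaximal by \cite[Proposition 2.9]{azkrm} (and if $J(R) \neq 0$ we are back in the previous case). For the ``moreover'' clause I would apply Lemma \ref{jaclem} to an arbitrary extension $T \supseteq R$: either some $T/M$ fails to be absolutely algebraic, whence $T$ is already submaximal, or every residue field of $T$ is absolutely algebraic, which together with \cite[Proposition 2.6 ]{azkrm} bounds $|T/J(T)| \leq 2^{2^{\aleph_0}}$. Since $|R| > 2^{2^{\aleph_0}}$, the composite $R \to T/J(T)$ must collapse, so $R \cap J(T) \neq 0$ and in fact $|R \cap J(T)| = |R|$; this reintroduces a nonzero Jacobson radical and lets me rerun the $J \neq 0$ analysis on $T$.

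The main obstacle is the genuinely non-domain, equal-characteristic-zero configuration --- illustrated by $\mathbb{Z} \times \mathbb{Z}_{(p)}$, which is submaximal yet has \emph{no} transcendental unit and does not meet the hypotheses of Corollary \ref{corx4}. Here neither universal criterion applies on the nose, and the transfer from the domain conclusion of Corollary \ref{p4} back to $R$ must be engineered carefully: via idempotent splittings when $Spec(R)$ is disconnected, and via a more delicate localization or $G$-domain argument otherwise. Controlling $R \cap J(T)$ in the pure-cardinality case is the analogous difficulty on the algebra side, and I expect the technical weight of the proof to sit in these two transfer steps rather than in the characteristic bookkeeping.
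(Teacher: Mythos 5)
There is a genuine gap, and you have located it yourself: the case where $R$ is reduced, $J(R)\neq 0$, $Char(R)=Char(R/J(R))=0$ and $R$ is not a domain is never actually closed. You reduce it to ``transfer the conclusion back to $R$ and its algebras'' via idempotent splittings or ``a more delicate localization or $G$-domain argument,'' but neither is carried out, and neither can work in the generality needed: a reduced ring with nonzero Jacobson radical need not have any nontrivial idempotents, and passing to a single domain quotient $R/P$ with nonzero radical only shows that \emph{that quotient} and its algebras are submaximal, which says nothing about $R$ or about an arbitrary $R$-algebra $T$. The idea you are missing is Corollary \ref{p6}: if $R$ is not submaximal then $R$ is a Hilbert ring, so every prime ideal is an intersection of maximal ideals and hence $J(R)=N(R)$; for reduced $R$ this forces $J(R)=0$, and the fact quoted just before the corollary (that $|R/J(R)|>2^{2^{\aleph_0}}$ implies submaximal) forces $|R|\leq 2^{2^{\aleph_0}}$. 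This kills both hypotheses in one stroke, with no case split on the characteristic and no need to produce a transcendental unit.

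For the ``moreover'' clause the paper's route also avoids your second difficulty. Since $R$ is reduced, $R\cap N(T)\subseteq N(R)=0$, so the \emph{reduced} ring $T/N(T)$ contains a copy of $R$; then either Lemma \ref{jaclem} gives $J(R)\subseteq J(T/N(T))$, so $J(T/N(T))\neq 0$, or $|T/N(T)|\geq |R|>2^{2^{\aleph_0}}$, and the first part applies verbatim to $T/N(T)$, whence $T$ is submaximal. Your version instead works with $J(T)$ of a possibly non-reduced $T$ and then proposes to ``rerun the $J\neq 0$ analysis on $T$''; but that analysis assumed reducedness and is itself the incomplete step above. (Your positive- and mixed-characteristic subcases are sound in spirit, but they are doing work that the Hilbert-ring argument makes unnecessary.)
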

\begin{proof}
If $R$ is not submaximal, then by Corollary \ref{p6}, $R$ is
Hilbert ring and therefore $J(R)=N(R)$. Hence we infer that
$J(R)=0$ and by the above comment also we have $|R|\leq
2^{2^{\aleph_0}}$ which are absurd. For the final part note that
$T/N(T)$ contains a copy of $R$, hence by our assumptions, either
by the above lemma $J(T/N(T))\neq 0$, or
$|T/N(T)|>2^{2^{\aleph_0}}$. Thus by the first part, $T/N(T)$ and
therefore $T$ are submaximal.
\end{proof}

Hence by the above corollary if $T$ is a non-submaximal ring,
then for any reduced subring $R$ of $T$ we have $J(R)=0$. More
generally, for any subring $R$ of $T$ we have $N(R)=J(R)$. To see
this, note that $R+N(T)$ is a subring of $T$. Now since $T/N(T)$
contains a copy of $R/N(R)$, we infer that $J(R/N(R))=0$,  hence
we are done. The following is also interesting.

\begin{cor}
Let $R$ be a reduced ring with zero characteristic, then either
$R$ is submaximal or $\bigcap_{p\in \mathbb{P}} Rp=0$.
\end{cor}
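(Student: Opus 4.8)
The plan is to prove the contrapositive in the natural way: assume $R$ is a reduced ring with zero characteristic that is \emph{not} submaximal, and deduce that $\bigcap_{p\in\mathbb{P}}Rp=0$. The hypotheses give us two powerful structural facts from earlier in the paper. First, by Corollary~\ref{p7}, since $R$ is reduced and not submaximal, we must have $J(R)=0$ (otherwise $R$ would be submaximal). Second, by Corollary~\ref{plemg1}, since $R$ has zero characteristic and is not submaximal, for every prime $q$ there is a maximal ideal $M$ with $Char(R/M)=q$, so in particular $qR\subsetneq R$ for every prime $q$; more usefully, $R$ is a Hilbert ring with $J(R)=0$, meaning the intersection of all maximal ideals is $0$.

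First I would set $I=\bigcap_{p\in\mathbb{P}}Rp$ and aim to show $I\subseteq J(R)$, which combined with $J(R)=0$ finishes the proof. To this end I would take an arbitrary maximal ideal $M$ of $R$ and show $I\subseteq M$. Since $R$ is not submaximal and has zero characteristic, Corollary~\ref{pp3} tells us $Char(R/M)\neq 0$; as $R/M$ is a field this forces $Char(R/M)=p$ for some prime $p$, i.e.\ $p\in M$. Then for this particular prime, $Rp\subseteq M$, and since $I=\bigcap_{q}Rq\subseteq Rp\subseteq M$, we get $I\subseteq M$. Because $M$ was an arbitrary maximal ideal, $I\subseteq\bigcap_{M\in Max(R)}M=J(R)=0$, as desired.

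The key insight that makes this short is the interplay between the two corollaries: Corollary~\ref{pp3} guarantees that \emph{every} maximal ideal of a non-submaximal zero-characteristic ring contains some prime number, and Corollary~\ref{p7} guarantees that a reduced non-submaximal ring has trivial Jacobson radical. The first fact ensures each maximal ideal swallows one of the $Rp$'s (hence the whole intersection $I$), and the second fact ensures that being in every maximal ideal forces an element to be zero.

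The step I expect to require the most care is verifying that $I\subseteq M$ really follows for a \emph{fixed} $M$: the prime $p$ with $p\in M$ depends on $M$, but that is harmless since $I$ is the intersection over \emph{all} primes and is therefore contained in $Rp$ for whichever single $p$ that $M$ happens to contain. One should double-check that $Char(R/M)=p$ genuinely yields $p\cdot 1_R\in M$ and hence $Rp\subseteq M$, which is immediate from the definition of characteristic. No delicate calculation is needed; the argument is essentially a bookkeeping of which earlier result supplies which containment.
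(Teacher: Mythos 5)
Your proof is correct and follows essentially the same route as the paper: the paper's proof is exactly the two-step argument $\bigcap_{p\in\mathbb{P}}Rp\subseteq J(R)$ (via Corollary \ref{plemg1}, whose proof is the maximal-ideal-by-maximal-ideal argument you spell out) followed by $J(R)=0$ from Corollary \ref{p7}. You have merely unpacked the first containment in more detail than the paper does.
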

\begin{proof}
If $R$ is not submaximal then by Corollary \ref{plemg1}, we infer
that $\bigcap_{p\in \mathbb{P}} Rp\subseteq J(R)$. But by the
above corollary we also have $J(R)=0$. Hence we are done.
\end{proof}

For a ring $R$, let $CoHt_1(R)$ be the set of all prime ideal $P$
of $R$, such that if $P\subsetneq Q$, where $Q\in Spec(R)$, then
$Q\in Max(R)$.

\begin{prop}
Let $R$ be a noetherian Hilbert ring, then any $P\in CoHt_1(R)$
is an intersection of infinite countably many maximal ideals.
Consequently, if $R$ is one dimensional noetherian (Hilbert)
domain, then for any infinite family $\A$ of maximal ideals of
$R$, we have $\bigcap \A=0$. In particular,
\begin{enumerate}
\item For any noetherian ring $R$, either $R$ is submaximal
or $|CoHt_1(R)|\leq 2^{\aleph_0}$.
\item If $R$ is noetherian domain and $dim(R)=2$, then either
$R$ is submaximal or $|Spec(R)|\leq 2^{\aleph_0}$.
\end{enumerate}
\end{prop}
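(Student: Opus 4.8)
The plan is to reduce the first assertion to a structural fact about one-dimensional noetherian domains, and then to let that same fact drive every remaining part. The only substantive case of the first claim is when $P$ is not maximal (if $P$ is maximal it is trivially the intersection of the single maximal ideal $P$). So I assume $P\in CoHt_1(R)$ is not maximal and pass to $D:=R/P$. Then $D$ is a noetherian domain, it is Hilbert (a quotient of a Hilbert ring is Hilbert), and the coheight-one hypothesis says precisely that every nonzero prime of $D$ is maximal, so $\dim D=1$; moreover, being a Hilbert domain, $J(D)$ coincides with the nilradical, hence $J(D)=0$. Since the maximal ideals of $R$ containing $P$ correspond to the maximal ideals of $D$, it suffices to realize $0$ as the intersection of countably infinitely many maximal ideals of $D$.

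First I would isolate the engine of the whole proposition: in a one-dimensional noetherian domain $D$, every nonzero element lies in only finitely many maximal ideals. Indeed, for $0\neq a\in D$, Krull's principal ideal theorem together with $\dim D=1$ forces every prime containing $a$ to have height $1$, hence to be maximal, so $D/aD$ is a zero-dimensional noetherian, i.e.\ Artinian, ring and therefore has only finitely many maximal ideals; these are exactly the maximal ideals of $D$ that contain $a$. Two consequences follow at once. First, $D$ has infinitely many maximal ideals, for otherwise $J(D)$ would be the intersection, hence would contain the product, of finitely many nonzero ideals of the domain $D$, contradicting $J(D)=0$. Second --- and this is the \emph{Consequently} clause in full strength --- any infinite family $\A$ of maximal ideals of a one-dimensional noetherian domain has $\bigcap\A=0$, since a nonzero element of that intersection would lie in infinitely many maximal ideals. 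Combining the two, I choose any countably infinite subfamily of $Max(D)$; by the second consequence its intersection is $0$, and lifting back to $R$ exhibits $P$ as the intersection of countably infinitely many maximal ideals.

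For the cardinality items I would use that a non-submaximal noetherian $R$ is Hilbert, by Corollary \ref{p6}, and satisfies $|Max(R)|\le 2^{\aleph_0}$, by \cite[Proposition 2.6]{azkrm}. By the first part, every non-maximal $P\in CoHt_1(R)$ is the intersection of some countable family of maximal ideals, so the assignment sending a countable subset of $Max(R)$ to its intersection surjects onto the non-maximal members of $CoHt_1(R)$; since the number of countable subsets of a set of size at most $2^{\aleph_0}$ is at most $(2^{\aleph_0})^{\aleph_0}=2^{\aleph_0}$, and the maximal members number at most $|Max(R)|\le 2^{\aleph_0}$, we obtain $|CoHt_1(R)|\le 2^{\aleph_0}$, which is (1). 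For (2), in a two-dimensional noetherian domain every height-two prime is maximal and every prime strictly above a height-one prime is height-two, hence maximal, so every nonzero prime lies in $CoHt_1(R)$; thus $Spec(R)=\{0\}\cup CoHt_1(R)$ and $|Spec(R)|\le 1+|CoHt_1(R)|\le 2^{\aleph_0}$ whenever $R$ is not submaximal. The main obstacle is not any single computation but the recognition that the finiteness lemma simultaneously yields the infinitude of $Max(D)$, the full-strength \emph{Consequently} statement, and exactly the countability needed to run the cardinal bound $(2^{\aleph_0})^{\aleph_0}=2^{\aleph_0}$; a secondary subtlety is that the first assertion must be read for non-maximal $P$, the maximal case being degenerate.
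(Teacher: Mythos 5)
Your proof is correct, and it reaches the conclusion by a somewhat different mechanism than the paper. The paper starts from an arbitrary representation $P=\bigcap_{i\in I}M_i$ (available since $R$ is Hilbert), and shows that any countably infinite subfamily already intersects to $P$: if $A=\bigcap_{n}M_n\supsetneq P$, each $M_n$ is a minimal prime over $A$ (being maximal and containing a minimal prime over $A$ that strictly contains $P$, hence is maximal by the coheight hypothesis), so $Min(R/A)$ would be infinite, contradicting noetherianity; the \emph{Consequently} clause is then read off as a corollary. You invert this order: you pass to $D=R/P$, prove first that every nonzero element of a one-dimensional noetherian domain lies in only finitely many maximal ideals (Krull's principal ideal theorem plus the Artinian quotient $D/aD$), deduce from $J(D)=0$ both that $Max(D)$ is infinite and that every infinite intersection of maximal ideals is zero, and only then obtain the countable representation of $P$. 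Your route makes the ``engine'' explicit and proves the \emph{Consequently} clause in its full strength directly at the element level, and it also handles two points the paper leaves implicit: that the family $\{M_i\}$ must be infinite to begin with, and that the statement should be read for non-maximal $P$ (a maximal $P$ lies vacuously in $CoHt_1(R)$ but is not an intersection of infinitely many maximal ideals). The cardinality arguments for (1) and (2) agree with the paper's, with your treatment of (1) being slightly more careful in counting the maximal members of $CoHt_1(R)$ separately.
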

\begin{proof}
Let $\{M_i\}_{i\in I}$ be a family of maximal ideals of $R$ such
that $P=\bigcap_{i\in I}M_i$. We first show that for any infinite
countable subfamily $\{M_n\}_{n\in\mathbb{N}}$ of $\{M_i\}_{i\in
I}$ we have $P=\bigcap_{n\in \mathbb{N}} M_n$. Assume
$A=\bigcap_{n\in \mathbb{N}} M_n$, hence we infer that
$P\subseteq A\subseteq M_n$, for all $n\in \mathbb{N}$. Hence, if
$P\neq A$, then $Min(R/A)$ is infinite which is a contradiction,
thus $P=A$ and we are done. Now, if furthermore $R$ is one
dimensional domain, then any infinite intersection of maximal
ideals is zero, by the previous proof. To see $(1)$, assume that
$R$ is not submaximal, hence $|Max(R)|\leq 2^{\aleph_0}$, by the
comment preceding Corollary \ref{p6}, and since any $P\in
CoHt_1(R)$ is a countable intersection of maximal ideals, we
infer that $|CoHt_1(R)|\leq 2^{\aleph_0}$. Part $(2)$ is now
evident.
\end{proof}

We recall that each zero dimensional ring with nonzero
characteristic which is not integral over its prime subring, is
submaximal, see \cite[Corollary 3.14]{azkarm3}. The following is
a generalization of the existence of maximal subrings in artinian
rings, see \cite{azkrm2}.

\begin{cor}\label{p8}
Let $R$ be a semi-local ring. Then either $R$ is submaximal or
$R$ has nonzero characteristic, say $n$, which is integral over
$\mathbb{Z}_n$ (thus $R$ is zero-dimensional). In particular,
every semi-local ring with zero characteristic is submaximal.
Consequently,
\begin{enumerate}
\item Non-submaximal semi-local integral domains are exactly
non-submaximal fields.

\item Every non-submaximal noetherian semi-local ring, is
countable artinian.
\end{enumerate}
\end{cor}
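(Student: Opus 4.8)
The plan is to reduce the entire statement to the two cited zero-dimensional results of \cite{azkarm3} (that a zero-dimensional ring of zero characteristic is submaximal, and that a zero-dimensional ring of nonzero characteristic $n$ which is not integral over $\mathbb{Z}_n$ is submaximal). Thus the real crux is the following claim: a semi-local ring that is not submaximal is zero-dimensional.

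To prove this claim I would first apply Corollary \ref{p6} to get that a non-submaximal $R$ is a Hilbert ring. Fix an arbitrary prime ideal $P$ of $R$ and set $D=R/P$. Since quotients of Hilbert rings are Hilbert, and the maximal ideals of $D$ lie under the finitely many maximal ideals of the semi-local ring $R$, the ring $D$ is a semi-local Hilbert integral domain. Being Hilbert, the prime ideal $(0)$ of $D$ equals the intersection of the maximal ideals containing it, hence $J(D)=0$. On the other hand, if $D$ were not a field then each of its finitely many maximal ideals $M_1',\dots,M_j'$ would be nonzero, and since a product of nonzero ideals in a domain is nonzero we would obtain $0\neq M_1'\cdots M_j'\subseteq M_1'\cap\cdots\cap M_j'=J(D)$, a contradiction. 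Therefore $D=R/P$ is a field, every prime of $R$ is maximal, and $R$ is zero-dimensional.

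Granting the claim, the dichotomy is immediate: if the semi-local ring $R$ is not submaximal it is zero-dimensional, the cited result on zero-dimensional rings of zero characteristic then forces $Char(R)=n>0$, and the cited result on zero-dimensional rings of nonzero characteristic forces $R$ to be integral over $\mathbb{Z}_n$. (The ``in particular'' clause, that every semi-local ring of zero characteristic is submaximal, follows even faster from Corollary \ref{plemg1}, since a non-submaximal ring of zero characteristic has infinitely many maximal ideals, contradicting semi-locality.) For consequence $(1)$: a non-submaximal semi-local domain is zero-dimensional by the claim, and a zero-dimensional domain is a field, while conversely any non-submaximal field is already a non-submaximal semi-local domain. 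For consequence $(2)$: a non-submaximal noetherian semi-local ring is zero-dimensional by the claim, hence noetherian of Krull dimension zero, hence artinian; and since uncountable artinian rings are submaximal, it must be countable.

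The main obstacle is precisely the zero-dimensionality claim. The naive attempt to feed $R/P$ into Corollary \ref{p4} only yields that $R/P$ is submaximal, which is useless here because submaximality does not ascend from a quotient back to $R$. The decisive move is instead to exploit the Hilbert property to force $J(R/P)=0$ and then collide this with the elementary fact that a non-field semi-local domain has nonzero Jacobson radical; everything after that step is formal.
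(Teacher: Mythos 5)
Your proof is correct and follows essentially the same route as the paper: non-submaximality gives the Hilbert property via Corollary \ref{p6}, semi-locality then forces zero-dimensionality, and the cited results on zero-dimensional rings finish the dichotomy and both consequences. The only cosmetic difference is that you derive zero-dimensionality by passing to $R/P$ and colliding $J(R/P)=0$ with the nonzero product of finitely many nonzero maximal ideals, whereas the paper observes directly that a non-maximal prime in a Hilbert ring must be an intersection of infinitely many maximal ideals; these are the same observation.
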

\begin{proof}
If $Char(R)=0$, then we are done by Corollary \ref{plemg1}. Hence
assume that $R$ has nonzero characteristic. If $R$ is not
submaximal, then $R$ is a Hilbert ring by Corollary \ref{p6}.
Therefore every non-maximal prime ideal of $R$ is an intersection
of infinitely many maximal ideals. Hence we infer that $R$ is
zero dimensional, since $|Max(R)|<\aleph_0$. Now by the above
comment we infer that $R$ must be integral over its prime
subring. For part $(1)$, we note that the prime subring of an
integral domain with nonzero characteristic is a field; and for
$(2)$ note that $R$ is a zero-dimensional ring. Hence $R$ is
artinian. Thus by \cite[Proposition 2.4]{azkrm2}, $R$ must be
countable too.
\end{proof}

\begin{prop}\label{p13}
Let $R_1\subseteq R_2$ be extension of rings. Assume that $R_1$
is semi-local. Then either $R_2$ is submaximal or $R_1$ is
zero-dimensional. In other words, every algebra over a semi-local
ring which is not zero dimensional, is submaximal.
\end{prop}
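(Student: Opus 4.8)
The plan is to prove the contrapositive in the sharp form: assuming $R_1$ is \emph{not} zero-dimensional, I will manufacture a prime ideal $Q$ of $R_2$ whose quotient $R_2/Q$ is forced to be submaximal by Corollary \ref{p4}, and then lift submaximality from $R_2/Q$ back up to $R_2$. The second sentence of the statement is then just a restatement of this implication.

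First I would isolate the purely algebraic lemma that drives everything: a semi-local integral domain $D$ that is not a field satisfies $J(D)\neq 0$. Indeed, if $M_1,\dots,M_k$ are its finitely many (hence pairwise comaximal, since distinct maximal) ideals and $J(D)=\bigcap_i M_i=0$, then the Chinese Remainder Theorem gives $D\cong\prod_{i=1}^k D/M_i$; a finite product of fields is a domain only when $k=1$, in which case $M_1=0$ and $D$ is a field, a contradiction. I expect this step to be short, and it is the key structural input.

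Next I would choose the right prime of $R_1$ to reduce by, and here lies the main obstacle: the extension $R_1\subseteq R_2$ carries no integrality or lying-over hypothesis, so an \emph{arbitrary} non-maximal prime of $R_1$ need not be the contraction of a prime of $R_2$. The device for circumventing this is to pass to a \emph{minimal} prime. Since $R_1$ is not zero-dimensional there is a chain $P\subsetneq M$ with $M$ maximal; picking a minimal prime $P_0\subseteq P$ gives $P_0\subsetneq M$, so $P_0$ is a minimal prime that is not maximal, and $R_1/P_0$ is a semi-local integral domain that is not a field, whence $J(R_1/P_0)\neq 0$ by the lemma. Now with $S=R_1\setminus P_0$ the localization $S^{-1}R_2$ is nonzero, since $R_1\hookrightarrow R_2$ means no element of $S$ becomes $0$ in $R_2$; thus $S^{-1}R_2$ has a prime, yielding $Q\in\mathrm{Spec}(R_2)$ with $Q\cap R_1\subseteq P_0$, and minimality of $P_0$ forces $Q\cap R_1=P_0$.

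Finally I would assemble the pieces. The inclusion $R_1/P_0\hookrightarrow R_2/Q$ exhibits $R_2/Q$ as an algebra over the integral domain $R_1/P_0$, which has nonzero Jacobson radical, so $R_2/Q$ is submaximal by Corollary \ref{p4}. Lifting a maximal subring of $R_2/Q$ to its preimage in $R_2$, which is again a maximal subring (exactly the step already used in the preceding proposition about lying-over extensions), shows that $R_2$ is submaximal, completing the argument.
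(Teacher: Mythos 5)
Your proof is correct and follows essentially the same route as the paper: both rest on Corollary \ref{p4} applied to the extension $R_1/(Q\cap R_1)\subseteq R_2/Q$, the observation that a semi-local integral domain which is not a field has nonzero Jacobson radical, and the localization of $R_2$ at $R_1\setminus P_0$ to produce a prime of $R_2$ contracting into a chosen prime of $R_1$. The only difference is organizational: you argue the contrapositive directly from a non-maximal minimal prime of $R_1$, whereas the paper first shows that if $R_2$ is not submaximal then every prime of $R_2$ contracts to a maximal ideal of $R_1$, and then deduces zero-dimensionality.
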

\begin{proof}
First note that, if $P$ is a prime ideal of $R_2$, then the ring
$R_2/P$ contains a copy of $S=R_1/(R_1\cap P)$. Hence if $J(S)\neq
0$, then $R_2/P$ and therefore $R_2$ are submaximal by Corollary
\ref{p4}. If not, then we infer that $P\cap R_1$ is a maximal
ideal of $R_1$, since $R_1$ is semi-local. Hence, we may assume
that for any prime ideal $P$ of $R_2$, $R_1\cap P$ is a maximal
ideal in $R_1$. Now, if $Q$ is a prime ideal in $R_1$, then there
exists a prime ideal $P$ of $R_2$ such that $P\cap R_1\subseteq
Q$ (note, there exists a prime ideal $P$ of $R_2$ with
$P\cap(R_1\setminus Q)=\emptyset$). Hence we infer that $Q=P\cap
R_1$ and therefore $Q$ is maximal in $R_1$. Hence $R_1$ is a zero
dimensional ring.
\end{proof}

We recall the reader that a ring $R$ is called residue finite if
$R/I$ is a finite ring for every nonzero ideal $I$ of $R$. It is
clear that if $R$ is a residue finite ring, then $dim(R)\leq 1$
and in fact $dim(R)=1$ if and only if $R$ is a non-field integral
domain. In the next theorem we give the structure of
non-submaximal residue finite rings.

\begin{thm}
Let $R$ be a residue finite ring which is not submaximal. Then
$R$ is a countable principal ideal ring. Moreover, exactly one of
the following holds:
\begin{enumerate}
\item If $dim(R)=1$, then $R=U(R)\mathbb{Z}$  and $R$
is algebraic over $\mathbb{Z}$.

\item If $dim(R)=0$, then $R$ is an artinian ring with nonzero
characteristic, say $n$, which is also integral over
$\mathbb{Z}_n$. Moreover, $R$ has only finitely many ideals. In
particular, if $R$ is reduced then $R$ is finite.
\end{enumerate}
\end{thm}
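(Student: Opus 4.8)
The plan is to split on $\dim R$, which by residue finiteness is at most $1$, and to feed each finite quotient of $R$ into the machinery of Section~1. I would first record that a residue finite ring is noetherian: every nonzero ideal $I$ contains a principal ideal $(a)$ with $a\neq 0$, the quotient $R/(a)$ is finite hence noetherian, so $I/(a)$ is finitely generated and therefore so is $I$. Thus $R$ is noetherian, and since it is not submaximal, passing to finite quotients is the right tool: by the Introduction's fact a \emph{finite} ring fails to be submaximal only when it is some $\mathbb{Z}_n$, and by the standard correspondence the preimage of a maximal subring of $R/M$ under $R\to R/M$ is a maximal subring of $R$, so submaximality of any proper quotient forces submaximality of $R$.

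For $\dim R=0$ the ring $R$ is artinian, hence a finite product of local artinian rings. If there are at least two factors, residue finiteness applied to the complementary-factor ideals forces every factor to be finite, so $R$ is finite; and if $R$ is local artinian with nonzero maximal ideal $\mathfrak m$, then $R/\mathfrak m$ is finite and each $\mathfrak m^{i}/\mathfrak m^{i+1}$ is a finite-dimensional vector space over it, whence $R$ is finite. Thus $R$ is either finite or a field. In the finite case the Introduction's fact gives $R\cong\mathbb{Z}_n$, a principal ideal ring with the asserted properties, while Corollary~\ref{p8} supplies the integrality over $\mathbb{Z}_n$; when $R$ is reduced, $n$ is squarefree and $R$ is finite.

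For $\dim R=1$ the ring $R$ is a non-field domain. Since it is not submaximal, Corollary~\ref{p4} gives $J(R)=0$ and Corollary~\ref{p1} shows $U(R)$ is algebraic over $\mathbb{Z}$. The first key reduction is that every residue field $R/M$ is a prime field: a finite field properly containing its prime field has a maximal subfield, so $R/M$ would be submaximal and then so would $R$. Using this I would rule out positive characteristic: if $\operatorname{Char}R=p$ then every $R/M=\mathbb{F}_p$, so $x^{p}-x\in\bigcap_{M}M=J(R)=0$ for all $x$, forcing $R$ to be a field, a contradiction. Hence $\operatorname{Char}R=0$; by Corollary~\ref{pp3} no prime $p$ is invertible, so $pR\neq R$, and the second key reduction is that $pR$ is in fact maximal with $R/pR=\mathbb{F}_p$: otherwise $R/pR$ is a finite ring of characteristic $p$ different from $\mathbb{F}_p=\mathbb{Z}_p$, hence not any $\mathbb{Z}_n$, hence submaximal, making $R$ submaximal.

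Consequently $\operatorname{Max}(R)=\{\,pR : p\in\mathbb{P}\,\}$, each localization $R_{pR}$ is a one-dimensional noetherian local domain with principal maximal ideal, i.e.\ a DVR, so $R$ is a Dedekind domain all of whose maximal ideals are principal, and therefore a PID. Its irreducible elements are then, up to units, exactly the prime integers, so factoring an arbitrary nonzero element yields $R=U(R)\mathbb{Z}$; and since $U(R)$ is algebraic over $\mathbb{Z}$, so is every $u\cdot n\in U(R)\mathbb{Z}$, giving that $R$ is algebraic over $\mathbb{Z}$ and hence countable (countably many integer polynomials, each with finitely many roots in a domain). I expect the two reductions in the one-dimensional case---prime residue fields and the maximality of $pR$---to be the crux, since they are what convert ``not submaximal'' into the rigid $\mathbb{Z}$-like arithmetic. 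The one delicate point is the field subcase of $\dim R=0$: concluding ``reduced $\Rightarrow$ finite'' there requires that the only residually finite non-submaximal field be $\mathbb{F}_p$, that is, that infinite absolutely algebraic fields be excluded, which holds once residual finiteness is read as the ring being separated by its finite quotients.
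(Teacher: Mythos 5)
Your argument is correct and reaches all the stated conclusions, but it takes a genuinely different route from the paper's. The paper's entire proof pivots on one observation you never use: for every nonzero ideal $I$ the quotient $R/I$ is a finite non-submaximal ring, hence isomorphic to some $\mathbb{Z}_m$, and this forces $I=mR$. That single step yields ``principal ideal ring'' globally and at once; in the one-dimensional characteristic-zero case it immediately gives that the irreducibles of $R$ are, up to units, the prime integers, hence $R=U(R)\mathbb{Z}$, and in the zero-dimensional case it bounds the ideals by the divisors of the characteristic. You instead first prove noetherianity, then in dimension one show every residue field is a prime field, rule out positive characteristic via $x^p-x\in\bigcap_M M=J(R)=0$, prove each $pR$ is maximal, and assemble a Dedekind domain with principal maximal ideals to get a PID; this is longer but self-contained, and it explains structurally why the arithmetic of $R$ collapses onto $\mathbb{Z}$ (the paper also handles positive characteristic differently, by noting that every proper nonzero ideal would then be maximal, which is impossible in a one-dimensional domain). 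In dimension zero you prove the stronger fact that $R$ is finite or a field directly from the artinian decomposition, where the paper instead cites \cite{azkrm2} for countability and integrality over $\mathbb{Z}_n$. Finally, you correctly isolate the one genuine weak point, which the paper's own proof shares rather than resolves: under the literal definition of residue finite every field qualifies, and an infinite non-submaximal absolutely algebraic field (such fields exist by \cite{azkrm}) is reduced, zero-dimensional and residue finite but not finite, so the deduction ``reduced $\Rightarrow R\cong\mathbb{Z}_n$'' in case $(2)$ silently assumes a nonzero proper ideal exists. Your proposed reading of residual finiteness, that the finite-index ideals intersect to zero, is exactly the hypothesis needed to make that last sub-claim true.
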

\begin{proof}
First note that since $R$ is not submaximal then for any nonzero
ideal $I$ of $R$ we infer that $R/I\cong\mathbb{Z}_m$ for some
natural number $m$ (note, it is clear that all finite rings
except $\mathbb{Z}_n$, up to isomorphism, where $n$ is a natural
number, are submaximal). This shows that $I=Rm$ and therefore $R$
is a principal ideal ring. Now by the above comment we have two
cases, either $dim(R)=1$ or $dim(R)=0$. First assume that
$dim(R)=1$ and therefore $R$ is a non-field integral domain.
Hence we have two cases.
\begin{enumerate}
\item If $R$ has nonzero characteristic, say $p$ (where
$p\in\mathbb{P}$), then we infer that for any nonzero ideal $I$
of $R$ we have $R/I\cong \mathbb{Z}_p$, which is absurd, by the
preceding comment (note, in this case $R\cong \mathbb{Z}_p$ which
is impossible).
\item If $R$ has zero characteristic. Then $R$ is a $PID$ with
$Ir(R)=\mathbb{P}$, by the first part of the proof. Hence we
infer that $R=U(R)\mathbb{Z}$. Also note that by Corollary
\ref{p1}, $U(R)$ is algebraic over $\mathbb{Z}$. Therefore $U(R)$
is countable and hence $R$ is countable too. Thus we are done.
\end{enumerate}

Now assume that $R$ is zero-dimensional ring. Thus $R$ is
artinian, since $R$ is noetherian (note, every ideal of $R$ is
principal). Thus by \cite[Proposition 2.4]{azkrm2}, $R$ is
countable and has a nonzero characteristic, say $n$, which is also
integral over $\mathbb{Z}_n$, by \cite[Corollary 2.5]{azkrm2}.
Moreover, by the first part of the proof since every nonzero
ideal of $R$ has the form $I=Rm$ where $m|n$, we infer that $R$
has only finitely many ideals. Also note that if $R$ is reduced,
then by Corollary \ref{p7}, we infer that $J(R)=0$ and therefore
$R\cong \mathbb{Z}_n$ (where $n$ is square free) and hence we are
done.
\end{proof}

\begin{prop}
Let $D$ be an integral domain and $S$ be a multiplicatively
closed set in it such that $S\nsubseteq U(D)$. If $D_S$ is not
submaximal then the following conditions hold.
\begin{enumerate}
\item $D$ has zero characteristic. $S$ is algebraic over
$\mathbb{Z}$ and therefore $|S|\leq \aleph_0$. In particular
$\mathbb{Z}$ is not integrally closed in $D_S$.
\item There exists an infinite subset $\mathcal{M}$ of $Max(D)$ such that $Max(D_S)=\{Q_S\ |\ Q\in
\mathcal{M}\}$. In particular, $\bigcap \mathcal{M}=0$.
\item For any non-maximal prime ideal $P_S$ of $D_S$, either
$Char(\frac{D}{P})=0$ or $\frac{D_S}{P_S}\cong\frac{D}{P}$.
\end{enumerate}
\end{prop}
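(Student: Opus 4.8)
The plan is to exploit throughout that $D_S$ is an integral domain of the same characteristic as $D$ in which every element of $S$ becomes a unit, and to apply the earlier corollaries in contrapositive form. Note first that $0\notin S$ (else $D_S$ would be the zero ring), so the localization map $D\hookrightarrow D_S$ is injective. For part (1) I would first show $Char(D)=0$: if instead $Char(D)=p$, pick a non-unit $s\in S$; since $s$ is a unit of $D_S$ and $D_S$ is not submaximal, Corollary \ref{p1} forces $s$ to be algebraic over the prime subring $\mathbb{Z}_p$ of $D_S$, so $\mathbb{Z}_p[s]$ is a finite domain, hence a field, whence $s\in U(D)$, a contradiction. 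With $Char(D)=0$, Corollary \ref{p1} applied to each unit of $D_S$ shows every $s\in S$ is algebraic over $\mathbb{Z}$; since the elements of the quotient field of $D$ algebraic over $\mathbb{Q}$ form an algebraic extension of $\mathbb{Q}$, which is countable, we get $|S|\le\aleph_0$. Finally $\mathbb{Z}$ is a $PID$ with $U(D_S)\neq U(\mathbb{Z})=\{\pm1\}$ (as $s\in U(D_S)$ and $s\neq\pm1$), so by Corollary \ref{corx8} the failure of submaximality of $D_S$ forces $\mathbb{Z}$ not to be integrally closed in $D_S$.

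The heart of the argument is part (2), where the key step is that every $N\in Max(D_S)$ contracts to a maximal ideal $Q:=N\cap D$ of $D$. Writing $\bar D:=D/Q$ and $\kappa:=D_S/N\cong\bar D_{\bar S}$, where $\bar S$ is the image of $S$, I would first show $Char(\kappa)>0$. Indeed, a field of characteristic $0$ is submaximal by Remark \ref{remx1}; but if $\kappa$ had a maximal subring, its preimage in $D_S$ under $D_S\twoheadrightarrow\kappa$ would be a maximal subring of $D_S$ (since $N$ lies in this preimage, subrings between it and $D_S$ correspond to subrings between the given maximal subring and $\kappa$), contradicting non-submaximality of $D_S$. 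Hence $Char(\kappa)=p$, so $Char(\bar D)=p$. Now for each $s\in S$ choose a primitive $f\in\mathbb{Z}[t]$ with $f(s)=0$ (available by part (1)); its reduction $\bar f\in\mathbb{F}_p[t]$ is nonzero and satisfies $\bar f(\bar s)=0$, so $\bar s$ is algebraic over $\mathbb{F}_p$, the finite domain $\mathbb{F}_p[\bar s]\subseteq\bar D$ is a field, and $\bar s\in U(\bar D)$ (note $\bar s\neq0$ as $s\notin Q$). Thus $\bar S\subseteq U(\bar D)$, the localization is trivial, $\bar D=\kappa$ is a field, and $Q\in Max(D)$.

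I expect this exclusion of characteristic-$0$ residue fields, together with the accompanying pull-back of maximal subrings, to be the main obstacle; once it is in place the remainder is bookkeeping. Setting $\mathcal{M}:=\{N\cap D\mid N\in Max(D_S)\}\subseteq Max(D)$, the standard prime correspondence for localization gives $Max(D_S)=\{Q_S\mid Q\in\mathcal{M}\}$ with $N=(N\cap D)_S$, and $\mathcal{M}$ is infinite because $|Max(D_S)|$ is infinite by Corollary \ref{plemg1}. Since $D_S$ is a non-submaximal integral domain, $J(D_S)=0$ by Corollary \ref{p4}; as the image of $\bigcap\mathcal{M}$ lands in $\bigcap_{Q\in\mathcal{M}}Q_S=J(D_S)=0$ and $D\hookrightarrow D_S$, we conclude $\bigcap\mathcal{M}=0$. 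For part (3), let $P_S$ be a non-maximal prime of $D_S$ and $P:=P_S\cap D$, so $D_S/P_S\cong(D/P)_{\bar S}$. If $Char(D/P)=0$ the first alternative holds; otherwise $Char(D/P)=p>0$, and repeating the algebraicity computation above yields $\bar S\subseteq U(D/P)$, so the localization is trivial and $D_S/P_S\cong D/P$.
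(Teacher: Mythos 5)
Your proof is correct. Part (1) and the concluding bookkeeping of part (2) (infinitely many maximal ideals via Corollary \ref{plemg1}, and $\bigcap\mathcal{M}=0$ from $J(D_S)=0$ via Corollary \ref{p4}) coincide with the paper's argument. Where you genuinely diverge is the central step that a maximal ideal of $D_S$ contracts to a maximal ideal of $D$, and part (3). The paper argues by contradiction: if $P=P_S\cap D$ were non-maximal, then $D_S/P_S$, being a field lying between $D/P$ and its quotient field, would equal $Frac(D/P)$, and the quotient field of a non-field domain is submaximal (in residue characteristic $p$ a nonzero non-unit of $D/P$ is necessarily transcendental over $\mathbb{Z}_p$ and becomes a unit, so Corollary \ref{p1} applies; in characteristic $0$ one has $\frac{1}{2}$), whence $D_S$ would be submaximal. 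You instead first exclude residue characteristic $0$ (characteristic-zero fields are submaximal and maximal subrings pull back along quotient maps), and then show directly, by reducing a primitive integer polynomial annihilating $s$ modulo $p$, that every element of $\bar{S}$ is algebraic over $\mathbb{F}_p$, hence a unit of $D/Q$, so the residue-level localization is trivial and $D/Q=D_S/N$ is already a field. This computation also delivers part (3) immediately, and in the slightly sharper form that $\bar{S}\subseteq U(D/P)$ whenever $Char(D/P)>0$; the paper instead derives part (3) by applying part (1) recursively to the extension $D/P\subseteq (D/P)_{\bar{S}}$. Both routes rest on the same two ingredients, Corollary \ref{p1} and the fact that a finite integral domain is a field, and neither has a gap.
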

\begin{proof}
Since $D_S$ is not submaximal then by Corollary \ref{p1}, we
infer that $S$ is algebraic over $Z$. Hence if $Char(D)\neq 0$,
then $S\subseteq U(D)$ which is absurd. Thus $D$ has zero
characteristic. Hence by Corollary \ref{corx8}, $\mathbb{Z}$ is
not integrally closed in $D_S$. Now assume $P_S$ is a maximal
ideal in $D_S$ and $P\in Spec(D)\setminus Max(D)$. Thus we have
$\frac{D_S}{P_S}\cong (\frac{D}{P})_{\bar{S}}=Frac(\frac{D}{P})$,
where $\bar{S}=\{s+P\ |\ s\in S\}$ and $Frac(\frac{D}{P})$ is the
quotient field of $\frac{D}{P}$, and since $P$ is not maximal we
infer that $Frac(\frac{D}{P})$ is submaximal by Corollary
\ref{p1}, and therefore $D_S$ is submaximal which is absurd.
Hence $P\in Max(D)$. Also, note that by Corollary \ref{p8},
$Max(D_S)$ is infinite since $D_S$ is not submaximal; and by
Corollary \ref{p4} we have $J(D_S)=0$ and therefore $\bigcap
\mathcal{M}=0$. Finally, for part $(3)$, assume that
$Char(\frac{D}{P})=q>0$, then by part $(1)$, either
$\bar{S}\subseteq U(\frac{D}{P})$ and therefore
$\frac{D_S}{P_S}\cong (\frac{D}{P})_{\bar{S}}=\frac{D}{P}$ and we
are done; or $\bar{S}\nsubseteq U(\frac{D}{P})$ and therefore
$(\frac{D}{P})_{\bar{S}}$ is submaximal. Thus $D_S$ is submaximal
which is absurd.
\end{proof}

Note that in the above proposition clearly for any maximal ideal
$Q_S$ of $D_S$ we also have $\frac{D_S}{Q_S}\cong\frac{D}{Q}$.
More generally, if $R$ is a ring and $S$ be a multiplicatively
closed set in $R$, then the non-submaximality of $R_S$ implies
that every maximal ideal of $R_S$ has the form $P_S$ for some
maximal ideal $P$ of $R$, by the preceding proof. In particular,
if $R$ has nonzero characteristic then one can easily see that,
by a similar proof, for every prime ideal $P_S$ of $R_S$ we have
$\frac{R_S}{P_S}\cong\frac{R}{P}$. The following is a
generalization of \cite[Theorem 3.2 ]{azkarm4}.

\begin{thm}
Let $R$ be a ring and $S$ be a multiplicatively closed set in
$R$, such that $R_S$ is semi-local. Then at least one of the
following holds.
\begin{enumerate}
\item $R_S$ is submaximal.
\item $Spec(R_S)=Max(R_S)=\{P_S\ |\ P\in \mathcal{M}\}$, where
$\mathcal{M}$ is a finite subset of $Min(R)\cap Max(R)$.
\end{enumerate}
In particular if $(2)$ holds and $R_S$ is submaximal, then $R$ is
submaximal too.
\end{thm}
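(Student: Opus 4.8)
The plan is to assume from the outset that $R_S$ is not submaximal (otherwise (1) holds) and to extract the structural statement (2) from the two semi-local results already in hand, treating the final ``in particular'' separately as a descent through quotients. First I would apply Corollary \ref{p8} to the semi-local ring $R_S$: not being submaximal, it must have nonzero characteristic and be integral over its prime ring, and in particular it is zero-dimensional. Since a zero-dimensional ring satisfies $Spec = Max$ and $R_S$ is semi-local, the spectrum $Spec(R_S) = Max(R_S)$ is finite, which is the skeleton of (2); it remains only to identify the primes of $R$ that produce it.

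To pin down these primes I would invoke the comment immediately preceding the theorem: the non-submaximality of $R_S$ forces every maximal ideal of $R_S$ to have the form $P_S$ for some $P \in Max(R)$ disjoint from $S$. As every prime of $R_S$ is now maximal, the order-preserving bijection between $Spec(R_S)$ and the primes of $R$ disjoint from $S$ shows that each such contraction $P$ lies in $Max(R)$. To see $P \in Min(R)$ as well, note that any prime $P' \subsetneq P$ is automatically disjoint from $S$, so $P'_S \subsetneq P_S$ would contradict the minimality of $P_S$ in the zero-dimensional ring $R_S$; hence no prime lies strictly below $P$ and $P \in Min(R)\cap Max(R)$. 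Collecting these finitely many contractions into $\mathcal{M}$ yields exactly (2).

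For the ``in particular'' I would first record the lifting principle already used in Lemma \ref{jaclem}: for any ideal $K$ of $R$, a maximal subring of $R/K$ pulls back along $R \twoheadrightarrow R/K$ to a maximal subring of $R$ (a proper subring containing $K$ corresponds to a proper subring of $R/K$, and strict containments are preserved), so submaximality of some quotient $R/P$ forces submaximality of $R$. Assuming (2) and that $R_S$ is submaximal, it therefore suffices to find one $P \in \mathcal{M}$ with $R/P$ submaximal. Since $P$ is maximal, $R/P$ is a field, and because $\bar S$ avoids $0$ in $R/P$ one has $R_S/P_S \cong (R/P)_{\bar S} = R/P$, so the residue fields of $R_S$ are precisely the $R/P$. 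I would then split on $Char(R_S)$. If $Char(R_S)=0$, then $R_S/N(R_S) \cong \prod_{P\in\mathcal{M}} R/P$ cannot have every factor of positive characteristic (else a nonzero multiple of $1$ would be nilpotent, forcing $Char(R_S)>0$), so some $R/P$ has characteristic $0$ and is submaximal by Corollary \ref{pp3}. If $Char(R_S)=n>0$, then submaximality together with Corollary \ref{p8} shows $R_S$ is not integral over $\mathbb{Z}_n$; pushing a non-integral element into $R_S/N(R_S) \cong \prod R/P$ produces a residue field $R/P$ that is not algebraic over its prime finite field, hence submaximal by Lemma \ref{jaclem}. In either case the lifting principle gives that $R$ is submaximal.

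The main obstacle I expect is exactly this last transfer in the positive-characteristic case: ruling out that a non-integral element of $R_S$ might become integral in every single residue field. I would control this by passing through the nilradical and using that nilpotents are integral, so that ``$R_S$ integral over $\mathbb{Z}_n$'' is equivalent to ``$R_S/N(R_S)$ integral over the image of $\mathbb{Z}_n$,'' i.e. to every $R/P$ being absolutely algebraic; the failure of integrality then localizes to a single factor and hence to a single submaximal residue field.
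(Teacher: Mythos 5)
Your derivation of the dichotomy itself is correct and is essentially the paper's argument: assume $R_S$ is not submaximal, quote the comment preceding the theorem to write every maximal ideal of $R_S$ as $P_S$ with $P\in Max(R)$, use Corollary \ref{p8} to get zero-dimensionality, and observe that contraction then forces $\mathcal{M}\subseteq Min(R)$. That part needs no repair.

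The ``in particular'' part, however, has a genuine gap, and it is twofold. First, the step ``submaximality together with Corollary \ref{p8} shows $R_S$ is not integral over $\mathbb{Z}_n$'' is a converse error: Corollary \ref{p8} is a disjunction (either $R_S$ is submaximal \emph{or} it is integral over $\mathbb{Z}_n$), not an equivalence, and a semi-local ring can perfectly well be both submaximal and integral over its prime subring --- $\mathbb{F}_4$, or $\mathbb{F}_p\times\mathbb{F}_p$, or $\mathbb{F}_p[t]/(t^2)$ are all such. Second, and more structurally, your entire strategy of locating a single submaximal residue field $R/P$ and lifting cannot succeed in positive characteristic: the ring $R=R_S=\mathbb{F}_p\times\mathbb{F}_p$ satisfies condition $(2)$ and is submaximal (the diagonal is a maximal subring), yet both residue fields are $\mathbb{F}_p$, which is not submaximal. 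So the submaximality of a zero-dimensional semi-local ring need not be witnessed in any residue field. This is exactly why the paper invokes the trichotomy of \cite[Theorem 2.26]{azkarm4} for zero-dimensional rings: either (a) some $R_S/P_S$ is submaximal, or (b) two distinct residue fields $R_S/P_S\cong R_S/Q_S$ are isomorphic, or (c) some quotient $R_S/I_S$ is isomorphic to $K[x]/(x^2)$. Your argument covers only branch (a) (and does so correctly, including the characteristic-zero case); branches (b) and (c) each require their own descent to $R$ --- via the isomorphism $R/P\cong R/Q$ and \cite[Theorem 2.2]{azarang} in case (b), and via identifying $I$ as a $P$-primary ideal with $R_S/I_S\cong R/I$ in case (c) --- and these are missing from your proposal.
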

\begin{proof}
Assume that $R_S$ is not submaximal, then by the above comment
$Max(R_S)=\{P_S\ |\ P\in \mathcal{M}\}$, where $\mathcal{M}$ is a
finite subset of $Max(R)$. But since $R_S$ is a semi-local
non-submaximal ring, then we infer that $R_S$ is
zero-dimensional, by Corollary \ref{p8}. Hence
$\mathcal{M}\subseteq Min(R)$. Now assume that $(2)$ holds and
$R_S$ is submaximal. Thus by \cite[Theorem 2.26]{azkarm4}, at
least one of the following holds (note $R_S$ is zero-dimensional).
\begin{enumerate}
\item There exists a maximal ideal $P_S$ of $R_S$, such that
$R_S/P_S$ is submaximal. But, since $R_S/P_S\cong R/P$, we infer
that $R/P$ and therefore $R$ are submaximal.
\item There exist distinct maximal ideals $P_S$ and $Q_S$ of $R_S$
such that $R_S/P_S\cong R_S/Q_S$. Hence similar to $(1)$, we infer
that $R/P\cong R/Q$ and therefore $R$ is submaximal, by
\cite[Theorem 2.2]{azarang}.
\item There exist an ideal $I_S$ and a maximal ideal $P_S$ of
$R_S$, such that $(P_S)^2\subseteq I_S\subseteq P_S$ and
$R_S/I_S\cong K[x]/(x^2)$, for some field $K$. Hence we infer
that $I$ is a $P$-primary ideal in $R$. Therefore $R/I$ is a local
ring with unique prime ideal $P/I$. Thus $R_S/I_S\cong
(R/I)_{\bar{S}}\cong (R/I)_{P/I}=R/I$, where $\bar{S}=\{s+I\ |\  s
\in S\}$, see \cite[P. 24, Ex. 7]{kap}. Hence $R/I$ and therefore
$R$ are submaximal.
\end{enumerate}
\end{proof}

The following remark which is a generalization of \cite[Corollary
1.15]{azkrm} is interesting.

\begin{rem}\label{p15}
Let $\mathcal{F}$ be the set of all fields, up to isomorphism,
which are not submaximal and let $\mathcal{D}$ be the class of all
integral domains (or reduced rings), up to isomorphism, which are
not submaximal. Now for any $D\in \mathcal{D}$ we have the
following facts:
\begin{enumerate}
\item For any $M\in Max(D)$, we have $D/M\in \mathcal{F}$.
\item For any $M, N\in Max(D)$, with $M\neq N$ we have $D/M\ncong
D/N$ (For otherwise, by \cite[Theorem 2.2]{azarang}, $D$ is
submaximal). In other words there exists an injection $\Phi_D$
from $Max(D)$ into $\mathcal{F}$, sending $M$ into $D/M$.
\item $|Max(D)|\leq |\mathcal{F}|$, by \cite[Proposition 2.6]{azkrm} or $(2)$.
\item $J(D)=0$, by Corollary \ref{p4} or \ref{p7}.
\item Hence we have the natural rings embedding $D\hookrightarrow\prod_{M\in Max(D)} D/M\hookrightarrow
\prod_{E\in\mathcal{F}}E$ (i.e., every non-submaximal integral
domain (or reduced ring) can be embedded in
$\prod_{E\in\mathcal{F}}E$).
\end{enumerate}
Now, for any $D\in\mathcal{D}$, let $RdMax(D)=Im(\Phi_D)$. Two non
submaximal integral domains $D_1$ and $D_2$ are called
$RdMax$-equivalent, if $RdMax(D_1)=RdMax(D_2)$. Now, let
$\mathcal{D}'$ be the set of equivalent classes of this relation.
We claim that $|\mathcal{D}'|\leq
2^{|\mathcal{F}|}=2^{2^{\aleph_0}}$ and $\mathcal{F}\subseteq
\mathcal{D}'$. To show this, it is clear that
$\mathcal{F}\subseteq \mathcal{D}'$. Also note that for any
$[D]\in \mathcal{D}'$, the function that send $[D]$ into
$RdMax(D)$ is well-defined and one-one from $\mathcal{D}'$ into
$P(\mathcal{F})$, the set of all subsets of $\mathcal{F}$. Hence
we are done, since $|\mathcal{F}|=2^{\aleph_0}$, by
\cite[Corollary 1.15]{azkrm}.
\end{rem}

\section{Submaximal Integral Domains}

In \cite[Corollary 1.3]{azkrm2}, it is proved that uncountable
fields, are submaximal. The following interesting result is a
generalization of this fact.

\begin{thm}\label{t1}
Let $R$ be an uncountable $UFD$, then $R$ is submaximal.
\end{thm}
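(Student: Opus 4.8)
The plan is to separate two cases according to whether $R$ contains a unit that is transcendental over its prime subring $Z$. The easy case is immediate: if some $u\in U(R)$ is not algebraic over $Z$, then $R$ is submaximal by Corollary \ref{p1}, and nothing more is needed. So the whole difficulty lies in the opposite case, where every unit of $R$ is algebraic over $Z$. Here I would first record two cardinality facts. Since $Z$ is countable and, in a domain, a single polynomial over $Z$ has only finitely many roots while there are only countably many such polynomials, the set of elements of $R$ algebraic over $Z$ is countable; hence $U(R)$ is countable. On the other hand, fixing a set $Ir(R)$ of representatives of the associate classes of irreducibles, unique factorization gives a bijection between $R\setminus\{0\}$ and $U(R)\times\bigoplus_{Ir(R)}\mathbb{N}$, so $|R|=\max(|U(R)|,|Ir(R)|,\aleph_0)$. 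As $|R|>\aleph_0$ and $|U(R)|\le\aleph_0$, I conclude that $Ir(R)$ is uncountable. The same input shows $tr.deg_Z R$ is uncountable: a countable transcendence degree would make $Frac(R)$ an algebraic extension of a countable rational function field $k(\{t_i\})$ (with $k$ the quotient field of $Z$), hence countable, contradicting $|R|>\aleph_0$.

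With those reductions in hand, the strategy is to exhibit a maximal ideal $M$ of $R$ whose residue field $R/M$ is submaximal and then quote Remark \ref{p15}(1): a non-submaximal integral domain has all of its residue fields non-submaximal, so a single submaximal residue field forces $R$ itself to be submaximal. To make $R/M$ submaximal it suffices, by the final part of Lemma \ref{jaclem} (i.e.\ \cite[Theorem 1.8]{azkrm}), to arrange that $R/M$ is not algebraic over a finite field. Concretely, in characteristic zero I would aim for a maximal ideal $M$ with $M\cap\mathbb{Z}=0$, so that $R/M$ has characteristic zero and is automatically submaximal; in characteristic $p$ I would instead aim for a residue field containing an element transcendental over $\mathbb{Z}_p$. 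I should note that the companion tools Theorem \ref{pp2} and Corollary \ref{corx2} already cover the sub-case in which some non-unit of a $UFD$ subring becomes invertible in $R$ (equivalently $U(R)\cap D\neq U(D)$ for some such $D$); the genuinely resistant configuration is exactly the one isolated above, where $U(R)$ is so small that neither Theorem \ref{pp2} nor Proposition \ref{pp1} applies, and this is precisely why the residue-field route is needed.

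For the construction I would use the uncountable transcendence degree. Choose a transcendence basis $\{t_i\}_{i\in I}\subseteq R$ and set $B=Z[\{t_i\}]$, a polynomial ring and hence a $UFD$, over which $R$ is algebraic. Because $I$ is infinite, $B$ admits a surjection onto a non-absolutely-algebraic field: in characteristic zero one may send countably many of the $t_i$ to $1/2,1/3,1/5,\dots$ and the rest to $0$, producing a surjection $B\twoheadrightarrow\mathbb{Q}$ whose kernel is a maximal ideal $M_0$ of $B$ with $M_0\cap\mathbb{Z}=0$; in characteristic $p$ one similarly surjects $B$ onto $\mathbb{Z}_p(s)$. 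The main obstacle, and the crux of the argument, is to lift $M_0$ to a maximal ideal $M$ of $R$ with $R/M$ still of characteristic zero (resp.\ still containing a transcendental): since $R$ is only algebraic, not integral, over $B$, the Lying-Over and Going-Up theorems are unavailable, and one must rule out both that the extended ideal $M_0R$ becomes improper and that every prime of $R$ over $M_0$ is non-maximal with an absolutely algebraic residue field. I expect to handle this by passing to a suitable localization of $R$ that is integral over the corresponding localization of $B$, or by building the homomorphism $R\to\overline{R/M_0}$ directly and checking that its image is a field, after which Remark \ref{p15}(1) finishes the proof. Verifying that this lifting can always be carried out for an arbitrary uncountable $UFD$ is the step I anticipate to be the delicate one.
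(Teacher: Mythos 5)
Your preliminary reductions are correct and match the paper's opening moves: if some unit is transcendental over $Z$ you are done by Corollary \ref{p1}, and otherwise $U(R)$ is countable, so $|Ir(R)|=|R|$ is uncountable. But the proof then stops exactly where the real work begins. The entire content of the theorem is concentrated in the step you defer --- producing a prime $M$ of $R$ lying over your maximal ideal $M_0$ of $B=Z[\{t_i\}]$ with $R/M$ not absolutely algebraic --- and nothing you write makes that step go through. Since $R$ is only algebraic, not integral, over $B$, you have no Lying-Over; you give no reason why $M_0R\neq R$ (an ideal of $R$ generated by uncountably many non-units can perfectly well be improper, and the hypothesis ``all units are algebraic over $Z$'' only rules out a \emph{single} generator being a unit); and even granting $M_0R\neq R$, you give no reason why some prime over it should meet $\mathbb{Z}$ in $0$ (char.\ $0$ case) or retain a transcendental in its residue field (char.\ $p$ case). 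Your own closing sentence concedes that verifying this ``for an arbitrary uncountable $UFD$'' is the delicate point; as written, the argument is a plan, not a proof. Note also that the auxiliary observation that $tr.deg_Z R$ is uncountable, while true, is never actually exploited in a way that closes the gap.

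For comparison, the paper does not lift ideals from a polynomial subring at all; it argues internally with the irreducibles of $R$. Since $Ir(R)$ is uncountable and $U(R)$ is countable, there is an irreducible $p$ with $1-p\notin U(R)$, hence the set $A=\{q\in Ir(R):pR+qR=R\}$ is nonempty; a Euclid-style count using the elements $pq+1$ (for $q\in Ir(R)\setminus A$) shows $A$ must be uncountable. For each $q\in A$ the class $p+(q)$ is a unit of $R/(q)$, so either some such class is transcendental over the prime subring of $R/(q)$ --- in which case $R/(q)$, and hence $R$, is submaximal by Corollary \ref{p1} --- or else $Z[p]\cap(q)\neq 0$ for uncountably many non-associate $q$, forcing some fixed nonzero $f\in Z[p]$ (a countable set) to have uncountably many irreducible divisors, which is impossible in a $UFD$. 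This counting argument is precisely the device that replaces your unproved lifting step; your target (a quotient of $R$ containing a transcendental unit) is the same as the paper's, but you have not supplied the mechanism that produces it.
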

\begin{proof}
If $U(R)$ is uncountable, then we are done by Corollary \ref{p1}.
Hence we may assume that $U(R)$ is countable. Thus $|Ir(R)|=|R|$.
Now, note that there exists a $p\in Ir(R)$ such that $1-p\notin
U(R)$. Hence let $p$ be an element in $Ir(R)$, such that
$1-p\notin U(R)$ and $q_0\in Ir(R)$ such that $q_0|1-p$. Thus
$q_0\in A=\{q\in Ir(R)\ |\ pR+qR=R\ \}$. Now we show that $A$
must be an uncountable set. Let us assume that $A$ is countable
and put $B=\{pq+1\ |\ q\in Ir(R)\setminus A \}$. It is clear that
$B$ is an uncountable set and therefore there exists a non-unit
element $x\in B$ such that $x$ has an irreducible divisor $q'\in
Ir(R)\setminus A$ (note, $U(R)$ and $A$ are countable, thus the
set of all elements which are of the form $uq_1\cdots q_n$, where
$u\in U(R)$ and $q_i\in A$, $n\in \mathbb{N}\cup\{0\}$ must be a
countable set). Hence $q'R+pR=R$ and $q'\notin A$, which is a
contradiction. Thus $A$ must be uncountable. Now for any $q\in
A$, $p+(q)$ is a unit in the ring $R/(q)$, hence if there exists
$q\in A$ such that $p+(q)$ is not algebraic over the prime
subring of $R/(q)$, then by Corollary \ref{p1}, $R/(q)$ and
therefore $R$ are submaximal. Consequently, we may assume that
for any $q\in A$, $p+(q)$ is algebraic over the prime subring of
$R/(q)$. Thus for any $q\in A$, $Z[p]\cap (q)\neq 0$, where $Z$
is the prime subring of $R$. But $Z[p]$ is a countable set and
the set $\{(q)\}_{q\in A}$ is uncountable, thus there exists a
nonzero element $f\in Z[p]$ which belongs to an infinite (in fact
uncountable) number of $(q)$, where $q\in A$, which is a
contradiction. This proves the theorem.
\end{proof}

\begin{cor}\label{t2}
Let $R$ be a non-submaximal non-field $PID$, then $R$ is countable
and $|Ir(R)|=|R|$.
\end{cor}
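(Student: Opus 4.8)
The plan is to deduce both conclusions from the machinery already assembled, with Theorem \ref{t1} handling countability and Corollary \ref{p8} handling the count of irreducibles.

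First I would establish countability. Since $R$ is a $PID$ it is in particular a $UFD$, so Theorem \ref{t1} applies: an uncountable $UFD$ is submaximal. As $R$ is by hypothesis not submaximal, $R$ cannot be uncountable, hence $R$ is countable. Because $R$ is a non-field integral domain it possesses a nonzero non-unit $x$, whose powers $x, x^2, x^3, \ldots$ are pairwise distinct, so $R$ is infinite. Combining these gives $|R| = \aleph_0$.

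Next I would show $Ir(R)$ is infinite, which is the crux. The natural reduction is that in a non-field $PID$ the nonzero prime ideals are exactly the maximal ideals, each of the form $(p)$ with $p$ irreducible, and $(p) = (q)$ precisely when $p$ and $q$ are associates; this yields a bijection between $Ir(R)$ and $Max(R)$, so it suffices to prove $Max(R)$ is infinite. Here is where one invokes Corollary \ref{p8}, part $(1)$: non-submaximal semi-local integral domains are exactly non-submaximal fields. Since $R$ is a non-field non-submaximal integral domain, it follows that $R$ is not semi-local, i.e.\ $|Max(R)|$ is infinite. Through the bijection above, $Ir(R)$ is then infinite as well.

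Finally I would combine the two facts. From countability of $R$ we get $|Ir(R)| \le |R| = \aleph_0$, while the previous step gives $|Ir(R)| \ge \aleph_0$; hence $|Ir(R)| = \aleph_0 = |R|$, as required. I expect the only genuine obstacle to be the recognition that ruling out the finitely-many-primes situation (e.g.\ a discrete valuation ring, which is semi-local and hence submaximal in this zero-dimensional-or-field dichotomy) is exactly what Corollary \ref{p8} supplies; once the semi-local case is excluded, the cardinality bookkeeping is routine.
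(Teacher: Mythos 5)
Your proof is correct, and the first half (countability via Theorem \ref{t1}, plus the observation that a non-field domain is infinite) is exactly what the paper does. Where you diverge is in showing $Ir(R)$ is infinite: you identify $Ir(R)$ with $Max(R)$ via $p\mapsto (p)$ and then invoke Corollary \ref{p8}(1) to rule out the semi-local case, whereas the paper argues that if $Ir(R)$ were finite then $R$ would be a $G$-domain (invert the product of the finitely many primes to get the quotient field) and hence submaximal by Corollary \ref{p4}. The two routes are close cousins --- both ultimately rest on the fact that a non-field domain with only finitely many nonzero primes has $J(R)\neq 0$, and indeed Corollary \ref{p8} is itself derived from the Hilbert-ring machinery built on Corollary \ref{p4} --- but the paper's version is slightly more economical, bypassing the primes-versus-maximal-ideals bookkeeping and the semi-local vocabulary entirely. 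Your version has the small advantage of making the equality $|Ir(R)|=|Max(R)|$ explicit, which the paper leaves implicit.
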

\begin{proof}
By the above theorem, $R$ and $Ir(R)$ are countable. Now note that
if $Ir(R)$ is finite then $R$ is a $G$-domain and therefore $R$ is
submaximal by Corollary \ref{p4}, hence we are done.
\end{proof}

\begin{cor}
Every localization of an uncountable $UFD$ is submaximal.
\end{cor}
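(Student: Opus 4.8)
The plan is to prove this by a simple dichotomy on the multiplicatively closed set $S$ that defines the localization, reducing in each case to a result already established. Write $R_S$ for the localization, where $S$ is a multiplicatively closed subset of the uncountable $UFD$ $R$ with $0\notin S$, so that $R_S$ is a ring with $1\neq 0$ (this is automatic, since $R$ is a domain and hence $R_S$ embeds in its quotient field).

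First I would treat the case $S\subseteq U(R)$. Here every element of $S$ is already invertible in $R$, so the canonical localization map $R\to R_S$, $r\mapsto r/1$, is an isomorphism and $R_S\cong R$. Since $R$ is an uncountable $UFD$, it is submaximal by Theorem \ref{t1}, and therefore so is $R_S$.

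The remaining case is $S\nsubseteq U(R)$, that is, $S$ contains a non-unit of $R$. This is precisely the hypothesis of Corollary \ref{corx2}, which directly yields that $R_S$ is submaximal. As these two cases are exhaustive, the corollary follows.

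There is no genuine obstacle at this stage: all the substantive work has already been carried out, in Theorem \ref{t1} (every uncountable $UFD$ is submaximal) and in Corollary \ref{corx2} (localizing a $UFD$ at a set meeting the non-units produces a submaximal ring). The only point that requires any verification is the harmless identification $R_S\cong R$ in the degenerate case where $S$ consists entirely of units, which ensures that this case is covered by Theorem \ref{t1} rather than needing a separate argument.
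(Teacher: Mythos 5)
Your proof is correct and is essentially the argument the paper intends: the corollary is stated without proof as an immediate consequence of Theorem \ref{t1} (covering the case $S\subseteq U(R)$, where $R_S\cong R$) and Corollary \ref{corx2} (covering the case where $S$ meets the non-units). Your explicit two-case dichotomy just spells out that implicit reduction.
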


\begin{prop}\label{yy1}
Let $D$ be an uncountable atomic (or noetherian) domain. Assume
that there exists an irreducible element $p$ of $D$ such that
$1-p\notin U(D)$ and every element of $Z[p]$ has at most
countably many (irreducible) divisors. Then $D$ is submaximal. In
particular, if $D$ is an uncountable atomic (or noetherian)
domain such that every element of it has at most countably many
(irreducible) divisors, then $D$ is submaximal. Consequently,
every uncountable noetherian $idf$-domain is submaximal.
\end{prop}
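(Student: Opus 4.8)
The key is to mimic the uncountable-UFD proof (Theorem 2.4), but replace unique factorization with the atomic structure together with the countable-divisors hypothesis. Let me sketch the argument.

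The plan is to imitate the proof of Theorem~\ref{t1} as closely as possible, using the given irreducible $p$ with $1-p\notin U(D)$ in place of the one produced there by a counting argument (this is exactly why it is now a hypothesis rather than a derived fact, since without unique factorization the cardinality bound $|Ir(D)|=|D|$ is no longer automatic). First I would set $A=\{q\in Ir(D)\ |\ pD+qD=D\}$ and show that $A$ is uncountable. Here is the main divergence from the UFD case: in a UFD the counting argument rested on writing elements as products of irreducibles times a unit, which used uniqueness. In the atomic setting I instead invoke the countable-divisors hypothesis on $Z[p]$. Specifically, $1-p$ is a nonzero non-unit, so by atomicity it has an irreducible divisor $q_0$, and $q_0\in A$ since a common irreducible divisor of $p$ and $1-p$ would divide $1$. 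To force $A$ uncountable, I would argue by contradiction: if $A$ were countable, consider the uncountable family $B=\{pq+1\ |\ q\in Ir(D)\setminus A\}$ and, using atomicity to extract irreducible divisors, produce some $q'\in Ir(D)\setminus A$ with $q'D+pD=D$, contradicting $q'\notin A$. The delicate point is that without unique factorization one cannot simply ``count products of irreducibles from $A$''; I expect the hypothesis that \emph{every element of $Z[p]$ has at most countably many irreducible divisors} to be what rescues this step.

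With $A$ uncountable, the endgame copies Theorem~\ref{t1} verbatim. For each $q\in A$ the image $p+(q)$ is a unit in $D/(q)$ (since $pD+qD=D$). If for some $q\in A$ this unit is not algebraic over the prime subring of $D/(q)$, then $D/(q)$ is submaximal by Corollary~\ref{p1}, hence so is $D$. Otherwise $p+(q)$ is algebraic over the prime subring for every $q\in A$, which gives a nonzero element of $Z[p]$ lying in $(q)$, i.e.\ a nonzero irreducible divisor (after factoring in $D$) associated to each such $q$. Since $A$ is uncountable but each fixed nonzero $f\in Z[p]$ has—by hypothesis—only countably many irreducible divisors and $Z[p]$ itself is countable, a pigeonhole/counting contradiction arises exactly as in Theorem~\ref{t1}. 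This proves the main statement.

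For the three successive specializations, each is a quick reduction. If \emph{every} element of $D$ has at most countably many irreducible divisors, then the hypothesis on $Z[p]$ holds automatically, and the existence of an irreducible $p$ with $1-p\notin U(D)$ follows because $D$ is uncountable: were $1-p\in U(D)$ for all irreducibles $p$, one could bound $|Ir(D)|$ by $|U(D)|$, and combined with atomicity this would force $|D|\le\aleph_0$, contradicting uncountability (this is the step I would double-check most carefully, as it is where uncountability is actually consumed). Finally, an $idf$-domain is by definition one in which every nonzero non-unit has only finitely many irreducible divisors, so an uncountable noetherian $idf$-domain falls under the previous case, giving submaximality. The only genuine obstacle, then, is establishing that $A$ is uncountable without unique factorization; everything after that is a transcription of the earlier proof.
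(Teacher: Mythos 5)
Your proposal is correct and follows the paper's route exactly: the paper's own proof of this proposition consists precisely of the instruction to repeat the proof of Theorem \ref{t1} word-for-word after noting that noetherian domains are atomic, which is what you do. The one point you single out as ``the only genuine obstacle,'' namely establishing that $A$ is uncountable without unique factorization, is in fact not an obstacle, and the hypothesis on $Z[p]$ is not what rescues it --- nor could it, since the elements $pq+1$ of $B$ do not lie in $Z[p]$. The counting of products of irreducibles from $A$ never used uniqueness of factorization in the first place: the set of elements of $D$ expressible in the form $uq_1\cdots q_n$ with $u\in U(D)$ and $q_i\in A$ is the image of a countable set of formal products, hence countable whether or not such expressions are unique, and atomicity guarantees every non-unit of the uncountable set $B$ admits some irreducible factorization, so some $x=pq+1$ has an irreducible divisor $q'$ outside $A$, forcing $q'\in A$, a contradiction. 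The countable-divisors hypothesis on $Z[p]$ is consumed only at the final pigeonhole step --- in a general atomic domain a single element could have uncountably many non-associate irreducible divisors, which is exactly what must be excluded there --- and you do invoke it correctly at that point.
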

\begin{proof}
Note that any noetherian integral domain is an atomic domain, and
by using the proof of the previous theorem word-for-word, one can
easily complete the proof.
\end{proof}

\begin{thm}\label{yy2}
Let $R$ be an uncountable atomic (or noetherian) integral domain
with zero characteristic. If every $n\in\mathbb{N}$ has at most
countably many (irreducible) divisors, then $R$ is submaximal.
\end{thm}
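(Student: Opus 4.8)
The plan is to derive the theorem from Proposition \ref{yy1} by manufacturing its hypotheses from the assumptions on $R$, after two standard reductions. Since $Char(R)=0$, the prime subring is $Z\cong\mathbb{Z}$, and the elements of $R$ algebraic over $Z$ form a countable set (each nonzero $g\in\mathbb{Z}[t]$ has at most $\deg g$ roots in the domain $R$, and $\mathbb{Z}[t]$ is countable). Hence if $U(R)$ were uncountable it would contain a unit transcendental over $Z$, and $R$ would be submaximal by Corollary \ref{p1}; so I may assume $U(R)$ is countable. Then $Ir(R)$ must be uncountable, for otherwise every nonzero element, being a unit times a finite product of irreducibles, would range over a countable set. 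Because all but countably many members of $Ir(R)$ are transcendental over $Z$, and because $q\mapsto 1-q$ is injective into the countable set $U(R)$, I can fix an irreducible $p$ that is transcendental over $Z$ and satisfies $1-p\notin U(R)$.

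Now I would run the scheme of Theorem \ref{t1}. Form $A=\{q\in Ir(R):pR+qR=R\}$; exactly as there, $A$ is uncountable, and for each $q\in A$ the class $\bar p=p+qR$ is a unit of $R/qR$. Split $A$ according to whether $qR\cap\mathbb{Z}\neq 0$ or $qR\cap\mathbb{Z}=0$. In the first case $q$ divides a natural number, so those $q$ lie in $\bigcup_{n\in\mathbb{N}}\{q\in Ir(R):q\mid n\}$, a \emph{countable} set by the divisor hypothesis on $\mathbb{N}$; this is precisely where the assumption (together with $Char(R)=0$) is used. Thus the complementary set $A'=\{q\in A:qR\cap\mathbb{Z}=0\}$ is still uncountable, and for $q\in A'$ the prime subring of $R/qR$ is $\mathbb{Z}$. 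If for even one $q\in A'$ the unit $\bar p$ were transcendental over $\mathbb{Z}$, then $R/qR$ would be submaximal by Corollary \ref{p1}; and submaximality of a quotient lifts to $R$, since the preimage under $R\to R/qR$ of a maximal subring $\bar S$ (with $\bar S[\bar x]=R/qR$) is a maximal subring of $R$. So I may assume that for every $q\in A'$ the unit $\bar p$ is algebraic over $\mathbb{Z}$, i.e.\ $\bar p$ is a root of some nonzero $g_q\in\mathbb{Z}[t]$, equivalently $Z[p]\cap qR\neq 0$; here $g_q$ has positive degree, since $qR\cap\mathbb{Z}=0$ forbids a constant relation.

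The crux is the final counting step. Pigeonholing the $g_q$ over the countable set $\mathbb{Z}[t]$, uncountably many $q\in A'$ share a common $g$, so the single nonzero element $f=g(p)\in Z[p]$ is divisible by uncountably many non-associate irreducibles $q$, each with $qR\cap\mathbb{Z}=0$. I must show this is impossible. The natural mechanism is a resultant obstruction: if some such $q$ divided, besides $g(p)$, a second value $h(p)$ with $\gcd(g,h)=1$ over $\mathbb{Q}$, then $q$ would divide the nonzero integer $\mathrm{Res}(g,h)$, forcing $qR\cap\mathbb{Z}\neq 0$ — a contradiction; complementarily, passing to the (finitely many, in the noetherian case) minimal primes over $(g(p))$ lets one corral the $q$ into finitely many height-one primes and count. \emph{The hard part is exactly this bound.} The obstacle is that $R$ is only atomic, so an irreducible need not be prime: one cannot pass from $q\mid g(p)$ to $q$ dividing an irreducible factor of $g$, nor manufacture the needed second relation $h(p)$ for a single $q$ for free. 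Once the uncountable family is ruled out — equivalently, once some $q\in A'$ is shown to give a transcendental $\bar p$, whence the quotient lift applies — every element of $Z[p]$ has at most countably many irreducible divisors, Proposition \ref{yy1} applies to $p$, and $R$ is submaximal.
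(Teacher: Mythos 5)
Your reduction steps are fine ($U(R)$ countable, $Ir(R)$ uncountable, the choice of a transcendental irreducible $p$ with $1-p\notin U(R)$, the uncountability of $A$, discarding the countably many $q$ with $qR\cap\mathbb{Z}\neq 0$, and the lifting of submaximality along $R\to R/qR$). But the argument does not close, and you have correctly located where: after pigeonholing you are left with a single element $f=g(p)\in Z[p]$ divisible by uncountably many non-associate irreducibles $q$ with $qR\cap\mathbb{Z}=0$, and the hypothesis of the theorem says nothing about such an $f$ --- it only bounds the irreducible divisors of natural numbers. The resultant idea cannot rescue this: each $q$ comes with only the one relation $q\mid g(p)$, there is no second coprime relation $h(p)$ to pair with it, and (as you note) in an atomic or even noetherian domain an irreducible need not be prime, so you also cannot push $q$ onto an irreducible factor $g_i(p)$ of $g(p)$. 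In effect your plan reduces Theorem \ref{yy2} to Proposition \ref{yy1}, i.e.\ to the statement that every element of $Z[p]$ has at most countably many irreducible divisors; but the whole point of Theorem \ref{yy2} is that this stronger hypothesis is \emph{not} assumed, so the reduction runs in the wrong direction.

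The paper's proof avoids this by changing the object that gets pigeonholed. Instead of one transcendental irreducible $p$ and the family $A\subseteq Ir(R)$, it takes an uncountable transcendence basis $X$ of $R$ over $\mathbb{Z}$ (uncountable because $R$ is uncountable and algebraic over $\mathbb{Z}[X]$) and, for a fixed $n>1$, the elements $nx-1$ with $x\in X$. By Corollary \ref{corx7}, non-submaximality forces $\mathbb{Z}\cap(nx-1)R\neq 0$ for every such $x$; that is, the forced algebraic relation lands inside $\mathbb{Z}$ itself, $\mathbb{Z}\cap(nx-1)R=m_x\mathbb{Z}$ with $m_x$ a nonzero integer. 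Pigeonholing over the countably many ideals of $\mathbb{Z}$ yields one natural number $m$ with $ny-1\mid m$ for uncountably many $y$, and the countability of $U(R)$ then forces $m$ to have uncountably many irreducible divisors --- exactly what the hypothesis on $\mathbb{N}$ forbids. That is the missing idea: arrange for the element that accumulates uncountably many irreducible divisors to be an integer, not a general member of $Z[p]$.
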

\begin{proof}
We may assume that $U(R)$ is algebraic over $\mathbb{Z}$ and
therefore it is countable, by Corollary \ref{p1}. Hence we infer
that $|Ir(R)|=|R|$ and therefore $Ir(R)$ is uncountable. Let $X$
be a transcendental basis for $R$ over $\mathbb{Z}$. Now, if
there exist a natural number $n>1$ and $x\in X$ such that
$\mathbb{Z}\cap (nx-1)R=0$, then $R$ is submaximal by Corollary
\ref{corx7}. Hence we may assume that $\mathbb{Z}\cap (nx-1)R \neq
0$ for any natural number $n>1$ and $x\in X$. Since $X$ is
uncountable and the number of ideals of $\mathbb{Z}$ is
countable, we infer that there exists an uncountable subset $Y$
of $X$ such that for any $y\in Y$ we have $\mathbb{Z}\cap
(ny-1)R=m\mathbb{Z}$ for some fixed natural numbers $n>1$ and
$m$. Hence for any $y\in Y$ we have $ny-1|m$. Now we show that
$m$ has uncountably many irreducible divisors. Assume that
$P=\{q\in Ir(R)\ : q|ny-1, \text{for some}\ y\in Y\}$. If $P$ is
countable, then we infer that $\{ny-1\ :\ y\in Y\}$ is countable
too (note $U(R)$ is countable) which is a contradiction. Hence
$P$ is uncountable. Now note that any $q\in P$ is an irreducible
divisor of $m$, i.e., $m$ has uncountable many irreducible
divisors, which is a contradiction. Hence, for any natural number
$n>1$, the set $\{ x\in X\ |\ \mathbb{Z}\cap(nx-1)R \neq 0\}$ is
countable, and therefore $R$ is submaximal, by Corollary
\ref{corx7}.
\end{proof}

For more observations we need the following definition, see
\cite{ayache}.

\begin{defn}
An extension $R\subseteq T$ of rings is called residually
algebraic extension, if for any prime ideal $Q$ of $T$, the ring
$T/Q$ is algebraic over $R/(Q\cap R)$.
\end{defn}

One can easily see that if $R\subseteq T$ is a residually
algebraic extension then $T$ must be algebraic over $R$. Also see
\cite{ayache} for more interesting results about residually
algebraic extensions. In particular, see \cite[Section 4,
b-Maximal subrings]{ayache} which contains interesting results
related to the subject of this paper. The following lemma is
needed for the next theorem.

\begin{lem}
Let $R\subseteq T$ be a residually algebraic extension of rings
and $R$ has finite dimension. Then $T$ has finite dimension too
and we have $dim(T)\leq dim(R)$.
\end{lem}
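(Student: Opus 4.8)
The plan is to compute dimension via chains of prime ideals and to reduce the whole statement to an incomparability property: that contracting a strictly increasing chain of primes of $T$ down to $R$ yields a strictly increasing chain. Concretely, I would start with an arbitrary finite chain $Q_0\subsetneq Q_1\subsetneq\cdots\subsetneq Q_n$ of prime ideals of $T$ and set $P_i=Q_i\cap R$. Contraction is inclusion-preserving, so automatically $P_0\subseteq P_1\subseteq\cdots\subseteq P_n$. If I can show each of these inclusions is strict, then $P_0\subsetneq\cdots\subsetneq P_n$ is a chain of length $n$ in $\mathrm{Spec}(R)$, whence $n\leq dim(R)$; since the chain in $T$ was arbitrary this gives both the finiteness of $dim(T)$ and the bound $dim(T)\leq dim(R)$.

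The heart of the argument is thus the strictness step, which I would prove by contradiction. Suppose $P_{i-1}=P_i=P$ for some $i$, so that the strictly nested primes $Q_{i-1}\subsetneq Q_i$ of $T$ contract to the same prime $P$ of $R$. Passing to the quotient, write $B=T/Q_{i-1}$, which is an integral domain, and $A=R/P$, which embeds in $B$; by the residual algebraicity hypothesis applied to the prime $Q_{i-1}$, the domain $B$ is algebraic over $A$. The image $\mathfrak q=Q_i/Q_{i-1}$ is then a nonzero prime ideal of $B$, and the equality $Q_i\cap R=P$ translates exactly into $\mathfrak q\cap A=0$. It therefore suffices to show that a domain $B$ algebraic over a subdomain $A$ admits no nonzero prime ideal contracting to $0$ in $A$.

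For this last point I would pick any $0\neq x\in\mathfrak q$ and use an algebraic relation $a_nx^n+\cdots+a_1x+a_0=0$ with coefficients $a_i\in A$ not all zero. Letting $j$ be the least index with $a_j\neq 0$ and cancelling $x^j$ (legitimate since $B$ is a domain and $x\neq 0$), I obtain $a_j+a_{j+1}x+\cdots+a_nx^{n-j}=0$, so that $a_j=-x(a_{j+1}+\cdots+a_nx^{n-j-1})\in xB\subseteq\mathfrak q$, forcing $a_j\in\mathfrak q\cap A=0$, a contradiction (and the degenerate case where $a_j$ is the only nonzero coefficient gives $a_jx^n=0$ and the same contradiction directly).

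The main obstacle I anticipate is precisely this incomparability lemma: the hypothesis only gives \emph{algebraic}, not \emph{integral}, extensions, so the usual lying-over/incomparability machinery for integral extensions does not apply verbatim. The point to get right is that the weaker algebraicity still suffices, because the contradiction is extracted not from a monic relation but from the lowest nonzero coefficient of an arbitrary algebraic relation; this is where the domain hypothesis on the quotient (used to cancel the power $x^j$) is essential. Once this lemma is in hand, the reduction to chains and the dimension count are routine.
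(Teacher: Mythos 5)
Your proof is correct. The paper takes a different route: it argues by induction on $n=\dim(R)$, first in the case where $T$ is an integral domain (base case $n=0$: a domain algebraic over a field is a field; inductive step: for each nonzero prime $Q$ of $T$ one has $Q\cap R\neq 0$, so $\dim(R/(Q\cap R))<n$, and applying the induction hypothesis to the residually algebraic extension $R/(Q\cap R)\subseteq T/Q$ gives $\dim(T/Q)<n$, hence $\dim(T)\leq n$), and then handles a general $T$ by one more passage to quotients $T/Q$. The only nontrivial input there is the special case of incomparability over the zero ideal in an algebraic extension of domains ($Q\neq 0\Rightarrow Q\cap R\neq 0$), which the paper asserts in a parenthetical without proof. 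Your argument is the classical INC route: you deduce full incomparability for an arbitrary pair $Q_{i-1}\subsetneq Q_i$ lying over the same prime by quotienting by $Q_{i-1}$ and reducing to that same zero-ideal case, and then contract chains directly. Both proofs ultimately rest on the identical elementary fact --- cancelling $x^j$ in a domain and examining the lowest nonzero coefficient of an algebraic relation --- but you make it explicit and avoid the induction, which yields a more self-contained and arguably cleaner proof; the paper's version is shorter on the page only because it leaves that key contraction fact unproved.
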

\begin{proof}
Assume that $n=dim(R)$. First suppose that $T$ is an integral
domain, and we prove the lemma by induction on $n$. If $n=0$, then
$R$ is a field and therefore $T$ is a field too, hence we are
done. Thus assume that $n\geq 1$ and the lemma holds for any
residually algebraic extension (of integral domains) $R\subseteq
T$ with $dim(R)<n$. Now assume that $R\subseteq T$ is a residually
algebraic extension of integral domains with $dim(R)=n$. Hence
for any nonzero prime ideal $Q$ of $T$, the extension $R/(Q\cap
R)\subseteq T/Q$ is also a residually algebraic extension of
integral domains and $dim(R/(Q\cap R))<n$ (note that $T$ is
algebraic over $R$ and $Q\neq 0$, hence $Q\cap R\neq 0$). Hence
we infer that $dim(T/Q)<n$. This immediately implies that
$dim(T)\leq n$ and therefore we are done. Now assume that
$R\subseteq T$ be any residually algebraic extension, $dim(R)=n$
and $Q$ be a prime ideal of $T$. Hence $R/(Q\cap R)\subseteq T/Q$
is a residually algebraic extension of integral domains and
$dim(R/(Q\cap R))\leq n$. Thus by the first part of the proof we
infer that $dim(T/Q)\leq n$ and since the latter inequality holds
for any prime ideal $Q$ of $T$, we must have $dim(T)\leq n$.
Therefore we are done.
\end{proof}

The following is now in order.

\begin{thm}
Let $R$ be a noetherian integral domain with $tr.deg_Z(R)=n<
\aleph_0$ and assume that $X$ is a transcendental basis for $R$
over $Z$. Moreover let $Z[X]\subseteq R$ be a residually
algebraic extension and at least one of the following holds.
\begin{enumerate}
\item If $Char(R)=0$ and $n\geq 1$, then for any maximal ideal $M$
of $R$ we have $ht(M)\geq n+1$.

\item If $Char(R)=p>0$ and $n\geq 2$, then for any maximal ideal
$M$ of $R$ we have $ht(M)\geq n$.

\end{enumerate}
Then $R$ is submaximal.
\end{thm}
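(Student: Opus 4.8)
The plan is to argue by contradiction: assume $R$ is not submaximal and extract enough structure to violate the height hypotheses. Put $A = Z[X]$, a polynomial ring over $\mathbb{Z}$ (if $Char(R)=0$) or over the field $\mathbb{Z}_p$ (if $Char(R)=p$), hence a $UFD$; note $dim\,A = n+1$ in the first case and $dim\,A = n$ in the second. Since $R$ is not submaximal, Corollary \ref{p1} gives that $U(R)$ is algebraic over $Z$, while Theorem \ref{pp2} gives $U(R)\cap A = U(A)$, so no irreducible of $A$ is inverted in $R$. First I would record the consequences of residual algebraicity. By the preceding Lemma, $dim\,R \le dim\,A$; combined with the standing assumption $ht(M)\ge n+1$ (resp. $\ge n$) this forces $ht(M) = dim\,R = dim\,A$ for every $M\in Max(R)$, and then, since $R/M$ is a field algebraic over the domain $A/(M\cap A)$, the ring $A/(M\cap A)$ is itself a field, i.e. $M\cap A$ is a maximal ideal of $A$ of full height.

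Next I would show that $A\subseteq R$ is an $INC$ extension. If $Q_1\subsetneq Q_2$ were primes of $R$ with $Q_1\cap A = Q_2\cap A =: P$, then localizing $R/Q_1$ at the image of $A\setminus P$ produces a domain algebraic over the field $Frac(A/P)$, hence a field, which nonetheless would contain the nonzero prime $Q_2/Q_1$, a contradiction. Consequently, contracting a saturated chain $0 = Q_0\subsetneq\cdots\subsetneq Q_d = M$ of length $d = dim\,R$ to $A$ yields a strictly increasing chain of the same length terminating at the maximal ideal $M\cap A$; since $dim\,A = dim\,R$, this contracted chain is a chain of maximal length in $A$, hence saturated (as $A$ is catenary and equidimensional), so $ht_A(Q_i\cap A) = i$ for each $i$. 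In particular, a height-one prime of $R$ lying on a chain of full length contracts to a height-one prime of $A$.

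For the case $Char(R)=0$ I would then invoke Corollary \ref{corx7}: as $R$ is not algebraic over $\mathbb{Z}$, for a fixed $x\in X$ and each natural number $m>1$ there is $r\in R$ with $r(mx-1) = N\in\mathbb{Z}\setminus\{0\}$, so $mx-1$ divides $N$ in $R$. Choose a minimal prime $Q$ of $(mx-1)R$; Krull's principal ideal theorem gives $ht_R(Q)=1$, while $N\in Q$ forces some prime number $p\in Q$. Selecting $m$ with $p\nmid m$, the element $mx-1$ stays nonconstant modulo $p$, whence $Q\cap A\supseteq (p,\,mx-1)$ is a height-two prime of $A = \mathbb{Z}[X]$. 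This contradicts the conclusion of the previous paragraph, provided $Q$ sits on a chain of full length. The case $Char(R)=p>0$ is handled analogously, but here $\mathbb{Z}_p$ offers no prime numbers to exploit, so the hypothesis $n\ge 2$ is used to supply a second transcendental from $X$ (so as to manufacture, in place of $(p,\,mx-1)$, an irreducible configuration built from two members of $X$) yielding again a height-one prime of $R$ whose contraction to $A = \mathbb{Z}_p[X]$ has height two.

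The main obstacle is precisely the interface between heights in $R$ and in $A$: the contradiction requires that the offending height-one prime $Q$ (forced to contain a height-two prime of $A$) lie on a chain of full length $dim\,R$, equivalently $dim\,R/Q = dim\,R - 1$. Residual algebraicity by itself yields only incomparability and the inequality $dim\,R\le dim\,A$, and a general Noetherian domain need not be catenary, so one cannot simply place $Q$ on a saturated chain for free. It is exactly the equi-height hypothesis on $Max(R)$, together with the chain-contraction property established above and the residually-algebraic machinery of \cite{ayache}, that must be leveraged to rule out a going-down failure of the shape $ht_R(Q)=1 < ht_A(Q\cap A)$. Making this step airtight is where the real work lies; once it is in place, the remaining steps are bookkeeping with Corollaries \ref{p1}, \ref{corx7} and Proposition \ref{pp1}.
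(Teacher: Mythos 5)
Your proposal stalls exactly where you say it does, and the obstacle you identify (catenarity of $R$, placing the height-one prime $Q$ on a saturated chain of full length) is not the tool the paper uses; the missing idea is to \emph{pass to the quotient} rather than to match chains between $R$ and $A=Z[X]$. The paper takes $P$ minimal over $(2x-1)R$ (so $ht(P)=1$ by Krull and $P$ is not maximal by hypothesis) and splits on $P\cap \mathbb{Z}[X]$: if it equals $(2x-1)\mathbb{Z}[X]$, then $\mathbb{Z}[X]/(2x-1)\mathbb{Z}[X]$ embeds in $R/P$, so $\frac{1}{2}\in R/P$ and $R/P$ --- hence $R$ --- is submaximal outright by Corollary \ref{pp3} (no contradiction needed); otherwise $ht(P\cap\mathbb{Z}[X])\geq 2$, so $\dim\bigl(\mathbb{Z}[X]/(P\cap\mathbb{Z}[X])\bigr)\leq n-1$ (catenarity is needed only in $\mathbb{Z}[X]$, where it is free), and since $\mathbb{Z}[X]/(P\cap\mathbb{Z}[X])\subseteq R/P$ is again residually algebraic, the preceding Lemma gives $\dim(R/P)\leq n-1$ directly. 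That single application of the Lemma to the quotient extension is what replaces your entire INC/chain-contraction apparatus and is the step your write-up never takes, even though you had already produced the needed input $ht_A(Q\cap A)\geq 2$ (your route to that inequality via Corollary \ref{corx7} and a prime number $p\in Q$ is fine, and in characteristic zero it even merges the paper's two cases). With $ht(P)=1$ and $\dim(R/P)\leq n-1$ in hand, the paper concludes $ht(M)\leq n$ for maximal $M\supseteq P$, contradicting the hypothesis; if you want this last height inequality without assuming $R$ catenary, apply Krull's principal ideal theorem in $R_M$ to $(2x-1)R_M$ to find a minimal prime $P'$ of $(2x-1)R$ inside $M$ with $ht(M/P')\geq ht(M)-1\geq n$, and run the dichotomy on that $P'$.

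Two further points. First, your characteristic-$p$ case is not actually an argument: there is no integer $N$ to exploit, and ``an irreducible configuration built from two members of $X$'' does not specify an element. The paper uses $1-xy$ for distinct $x,y\in X$ (this is where $n\geq 2$ enters) and again splits on whether $P\cap\mathbb{Z}_p[X]=(xy-1)\mathbb{Z}_p[X]$; in that case $\bar{x}$ is a unit of $R/P$ transcendental over $\mathbb{Z}_p$ and Corollary \ref{p1} applies, while otherwise $ht(P\cap\mathbb{Z}_p[X])\geq 2$ forces $\dim(R/P)\leq n-2$ as before. Second, your preliminary reductions ($\dim R=\dim A$, INC, saturated-chain contraction) are not needed once the quotient argument is available, so the ``real work'' you defer is both unresolved in your text and avoidable.
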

\begin{proof}
$(1)$ Let $x\in X$, if $R(2x-1)=R$, then we are done, by Corollary
\ref{p1}. Hence assume that $R(2x-1)\neq R$ and $P$ be a prime
ideal of $R$ which is minimal over $R(2x-1)$. Thus by the Krull's
principal ideal theorem we infer that $ht(P)=1$ and therefore by
our assumption $P$ is not a maximal ideal in $R$. Hence
$(0)\subsetneq (2x-1)\mathbb{Z}[X]\subseteq P\cap\mathbb{Z}[X]$.
Thus we have two cases. First, if $(2x-1)\mathbb{Z}[X]=
P\cap\mathbb{Z}[X]$, then
$\frac{\mathbb{Z}[X]}{(2x-1)\mathbb{Z}[X]}\subseteq \frac{R}{P}$
and therefore $\frac{1}{2}\in U(\frac{R}{P})$, hence we are done,
by Corollary \ref{pp3}. Thus we may assume that
$Q=P\cap\mathbb{Z}[X]\neq (2x-1)\mathbb{Z}[X]$. Therefore
$ht(Q)\geq 2$ and since $dim(\mathbb{Z}[X])=n+1$, we infer that
$dim(\frac{\mathbb{Z}[X]}{Q})\leq n-1$. But
$\frac{\mathbb{Z}[X]}{Q}\subseteq \frac{R}{P}$ is a residually
algebraic extension, hence by the above lemma conclude that
$dim(\frac{R}{P})\leq n-1$. Now since $ht(P)=1$, the latter
inequality immediately implies that $ht(M)\leq n$, for any
maximal ideal $M\supseteq P$, which is absurd. Thus we are done.\\

$(2)$ Let $x,y\in X$ and $x\neq y$. If $R(1-xy)=R$, then we are
done by Corollary \ref{p1}. Hence assume that $R(1-xy)\neq R$ and
$P$ be a prime ideal of $R$ which is minimal over $R(xy-1)$. Thus
by the Krull's principal ideal theorem we infer that $ht(P)=1$ and
therefore by our assumption $P$ is not a maximal ideal in $R$.
Hence $(0)\subsetneq (xy-1)\mathbb{Z}_p[X]\subseteq
P\cap\mathbb{Z}_p[X]$. Thus we have two cases. First, if
$(xy-1)\mathbb{Z}_p[X]= P\cap\mathbb{Z}_p[X]$, then
$\frac{\mathbb{Z}_p[X]}{(xy-1)\mathbb{Z}_p[X]}\subseteq
\frac{R}{P}$ and therefore $x+(xy-1)\mathbb{Z}_p[X]\in
U(\frac{R}{P})$, hence we are done by Corollary \ref{p1} (note,
$x+(xy-1)\mathbb{Z}_p[X]$ is not algebraic over $\mathbb{Z}_p$,
since $\mathbb{Z}_p[X]$ is a $UFD$). Thus assume that
$Q=P\cap\mathbb{Z}_p[X]\neq (xy-1)\mathbb{Z}_p[X]$. Therefore
$ht(Q)\geq 2$ and since $dim(\mathbb{Z}_p[X])=n$ we infer that
$dim(\frac{\mathbb{Z}_p[X]}{Q})\leq n-2$. But
$\frac{\mathbb{Z}_p[X]}{Q}\subseteq \frac{R}{P}$ is a residually
algebraic extension, hence by the above lemma we infer that
$dim(\frac{R}{P})\leq n-2$. Now since $ht(P)=1$, the latter
inequality immediately implies that $ht(M)\leq n-1$, for any
maximal ideal $M\supseteq P$, which is absurd. Thus we are done.
\end{proof}

\begin{prop}\label{t7}
Let a non-singleton $X\neq \emptyset$ be a set of independence
indeterminates in a noetherian ring $R$ over $Z$, where $Z$ is
the prime subring of $R$. If $Char(R)\in \mathbb{P}\cup\{0\}$ and
$R$ is integral over $Z[X]$, then $R$ is submaximal.
\end{prop}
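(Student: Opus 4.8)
The plan is to produce, inside a suitable residue domain of $R$, a unit that is transcendental over the prime subring, and then to invoke Corollary \ref{p1}. First I would reduce to the case that $R$ is an integral domain. The hypothesis $\mathrm{Char}(R)\in\mathbb{P}\cup\{0\}$ forces $Z$ to be either $\mathbb{Z}$ or $\mathbb{Z}_p$, so that $Z[X]$ is a UFD, and in particular an integrally closed domain; this is essentially the only place the characteristic restriction is used, but it is indispensable. Since $Z[X]$ is a domain, lying over for the integral extension $Z[X]\subseteq R$ yields a prime $Q_0$ of $R$ with $Q_0\cap Z[X]=0$; then $Z[X]\hookrightarrow R/Q_0$, the quotient $R/Q_0$ is a noetherian domain that is integral over $Z[X]$ with the same prime subring $Z$, and any maximal subring of $R/Q_0$ pulls back to a maximal subring of $R$. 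Hence I may and do assume that $R$ is a domain.

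Next I would use the non-singleton hypothesis to choose two distinct elements $x,y\in X$ and concentrate on $f=xy-1$. Viewed as a polynomial of degree one in $x$ over the UFD $Z[X\setminus\{x\}]$, the element $f$ is primitive, hence irreducible in $Z[X]$, so $(f)Z[X]$ is a prime ideal of height one. If $fR=R$, then $f$ is a unit of $R$ which is transcendental over $Z$, and Corollary \ref{p1} already shows $R$ is submaximal. So I may assume $fR\neq R$ and choose a prime $P$ of $R$ minimal over $fR$; since $R$ is a noetherian domain and $f\neq 0$, Krull's principal ideal theorem gives $\mathrm{ht}(P)=1$.

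The crux is to compute the contraction $Q_1:=P\cap Z[X]$, which plainly contains $(f)Z[X]$. I would show $Q_1=(f)Z[X]$ by a height argument: as $Z[X]$ is an integrally closed domain and $R$ is a domain integral over it, the Going-Down theorem applies, so every strict chain of primes below $Q_1$ in $Z[X]$ lifts to a strict chain below $P$ in $R$; therefore $\mathrm{ht}(Q_1)\le\mathrm{ht}(P)=1$, and since $(f)Z[X]\subseteq Q_1$ with $(f)Z[X]$ a height-one prime, equality follows. I expect this to be the main obstacle, as it is exactly here that noetherianity (for the principal ideal theorem) and the normality of $Z[X]$ (for going-down) must be combined correctly.

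Finally I would read off the conclusion. The equality $Q_1=(f)Z[X]$ gives an embedding $Z[X]/(f)Z[X]\hookrightarrow R/P$, and the source ring is isomorphic to the Laurent-type ring $Z[X\setminus\{y\}][x^{-1}]$ via $y\mapsto x^{-1}$. In it the image of $x$ is a unit that is still transcendental over $Z$; this is precisely where two distinct variables are needed, since a single variable would only supply $x^2-1$ and make the image of $x$ algebraic (a square root of $1$). Thus $R/P$ contains a unit transcendental over its prime subring, so $R/P$ is submaximal by Corollary \ref{p1}, and therefore $R$ is submaximal, as desired.
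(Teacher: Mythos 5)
Your proof is correct and follows essentially the same route as the paper: choose $x\neq y$ in $X$, pass to a prime $P$ minimal over $(xy-1)R$, show $P\cap Z[X]=(xy-1)Z[X]$, and exhibit $\bar{x}$ as a unit of $R/P$ transcendental over its prime subring so that Corollary \ref{p1} applies. The only real difference is that you justify the key inequality $ht(P\cap Z[X])\le ht(P)$ by first reducing to the case that $R$ is a domain and then invoking Going-Down for the integral extension of the integrally closed domain $Z[X]$, whereas the paper asserts $ht(P\cap Z[X])\le 1$ without comment; your extra care here is warranted, since for a non-domain $R$ that step is not automatic.
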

\begin{proof}
Let $x, y\in X$ and $x\neq y$. If $R(1-xy)=R$, then we are done.
Hence assume that $P$ is a minimal prime ideal of $R(1-xy)$. Hence
$ht(P)\leq 1$, by the Krull's principal ideal theorem. Thus
$ht(P\cap Z[X])\leq 1$. But $(1-xy)Z[X]$ is a prime ideal in
$Z[X]$, which is contained in $P\cap Z[X]$. So we infer that
$(1-xy)Z[X]=P\cap Z[X]$ and therefore $T=Z[X]/(1-xy)Z[X]\subseteq
R/P$. Now $\bar{x}$ and $\bar{y}$ are units in $T$, which are not
algebraic over the prime subring of $T$ (note, $Z[X]$ is a
$UFD$). Hence $R/P$ has unit elements which are not algebraic
over its prime subring and therefore we are done, by Corollary
\ref{p1}.
\end{proof}

We conclude this article with the following fact about Dedekind
domains.

\begin{prop}\label{t8}
Let $D$ be an uncountable Dedekind domain with countable set of
maximal ideals. Then $U(D)$ is uncountable. In particular, $D$ is
submaximal.
\end{prop}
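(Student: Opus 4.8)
The plan is to first establish the cardinality claim $|U(D)|>\aleph_0$ by a counting argument on ideals, and then deduce submaximality directly from Corollary \ref{p1}.

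For the counting argument I would exploit the defining property of a Dedekind domain that every nonzero ideal factors uniquely as a finite product of maximal ideals. Let $I(D)$ denote the set of nonzero ideals of $D$. Since each such ideal is determined by the finitely-supported tuple of exponents occurring in its prime factorization, there is an injection of $I(D)$ into $\bigoplus_{P\in Max(D)}\mathbb{Z}_{\geq 0}$. As $Max(D)$ is countable by hypothesis, this direct sum is a countable set, and hence $|I(D)|\leq\aleph_0$.

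Next I would consider the map $\phi\colon D\setminus\{0\}\To I(D)$ sending $x$ to the principal ideal $xD$. The key point is that two nonzero elements have the same image under $\phi$ if and only if they are associates: indeed $xD=yD$ forces $y=ux$ for some $u\in U(D)$. Thus each nonempty fiber of $\phi$ is a coset $xU(D)$ and so has cardinality exactly $|U(D)|$. Now $D\setminus\{0\}$ is uncountable (since $D$ is), it is the disjoint union of these fibers, and there are at most $|I(D)|\leq\aleph_0$ of them; if we had $|U(D)|\leq\aleph_0$, then $|D\setminus\{0\}|\leq\aleph_0\cdot\aleph_0=\aleph_0$, a contradiction. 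Hence $U(D)$ is uncountable.

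For the final assertion I would observe that the set of elements of $D$ that are algebraic over the prime subring $Z$ is countable, because $Z$ is countable (it is $\mathbb{Z}$ or some $\mathbb{Z}_p$), so $Z[t]$ is countable and each of its polynomials has only finitely many roots in the domain $D$. Since $U(D)$ is uncountable, it must therefore contain a unit $x$ that is not algebraic over $Z$, and then $D$ is submaximal by Corollary \ref{p1}. The main point requiring care is the identification of the fibers of $\phi$ with the associate classes: this is exactly where the Dedekind hypothesis is used (via the implication $xD=yD\Rightarrow x,y$ associate), and the remainder is cardinality bookkeeping that should present no difficulty.
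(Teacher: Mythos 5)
Your proof is correct and follows essentially the same route as the paper: both arguments use the Dedekind hypothesis together with $|Max(D)|\leq\aleph_0$ to conclude that the set of (principal) ideals of $D$ is countable, and then derive the uncountability of $U(D)$ from the uncountability of $D$ (your fiber count for $x\mapsto xD$ merely replaces the paper's detour through the uncountability of $Ir(D)$), after which Corollary \ref{p1} finishes the job exactly as in the paper. One small correction to your closing remark: the implication $xD=yD\Rightarrow x$ and $y$ are associates holds in any integral domain, so the Dedekind hypothesis is used only for the countability of the set of ideals via unique factorization into primes, not for identifying the fibers with associate classes.
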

\begin{proof}
Let $U(D)$ be countable and seek a contradiction. It is now clear
that $Ir(D)$ is uncountable. Hence we infer that the set of
principal ideals of $D$ is uncountable. But since $D$ is a
Dedekind domain, every nonzero ideal of $D$ is a finite product of
prime ideals. Since the set of prime ideals of $D$ is countable,
we infer that the set of ideals $D$ is countable too, which is a
contradiction. Thus $U(D)$ is uncountable and therefore we are
done, by Corollary \ref{p1}.
\end{proof}


\end{document}